\definecolor{refkey}{gray}{.75}
\definecolor{labelkey}{gray}{.7}
\newtheorem{Theorem}{Theorem}[section]
\newtheorem{TheoremA}{Theorem}
\newtheorem{Lemma}[Theorem]{Lemma}
\newtheorem{Proposition}[Theorem]{Proposition}
\newtheorem{Remark}[Theorem]{Remark}
\newtheorem{Definition}[Theorem]{Definition}
\newtheorem{Warning}{Warning}[section]
 \definecolor{darkgreen}{rgb}{0,0.4,0}
\definecolor{light}{gray}{0.9}
\newcommand{\cC}{\ensuremath{\mathcal C}}
\newcommand{\cE}{\ensuremath{\mathcal E}}
\newcommand{\cG}{\ensuremath{\mathcal G}}
\newcommand{\cN}{\ensuremath{\mathcal N}}
\newcommand{\cR}{\ensuremath{\mathcal R}}
\newcommand{\cV}{\ensuremath{\mathcal V}}
\newcommand{\bbE}{{\ensuremath{\mathbb E}} }
\newcommand{\bbN}{{\ensuremath{\mathbb N}} }
\newcommand{\bbP}{{\ensuremath{\mathbb P}} }
\newcommand{\bbR}{{\ensuremath{\mathbb R}} }
\newcommand{\bbZ}{{\ensuremath{\mathbb Z}} }
\newcommand{\lr}{\leftrightarrow}
\newcommand{\rgh}{\rightarrow}
\let\a=\alpha \let\b=\beta   \let\d=\delta  \let\e=\varepsilon
 \let\g=\gamma       \let\l=\lambda
      \let\o=\omega      
  \let\s=\sigma \let\t=\tau   
  \let\z=\zeta
   \let\G=\Gamma  \let\L=\Lambda 
\let\O=\Omega      
\newcommand{\rosso}{\textcolor{black}}
\newcommand{\blu}{\textcolor{black}}
\newcommand{\influ}{\text{Inf\,}_i^\e}
\author[A.~Faggionato]{Alessandra Faggionato}
\address{Alessandra Faggionato.
  Dipartimento di Matematica, Universit\`a di Roma `La Sapienza'
  P.le Aldo Moro 2, 00185 Roma, Italy}
\email{faggiona@mat.uniroma1.it}
\author[H.A.~Mimun]{Hlafo Alfie Mimun}
\address{Hlafo Alfie Mimun.
  Dipartimento di Matematica, Universit\`a di Roma `La Sapienza'
  P.le Aldo Moro 2, 00185 Roma, Italy}
\email{mimun@mat.uniroma1.it}
\title[Connection probabilities]{Connection probabilities in Poisson random graphs  with uniformly bounded edges} 
\begin{document}

\begin{abstract}  We consider random graphs  with uniformly bounded edges on a Poisson point process conditioned to contain the origin. In particular we focus on  the random connection model, the  Boolean model and the Miller--Abrahams random resistor network with lower--bounded conductances. The latter is relevant for  the  analysis of   conductivity by  
Mott variable range hopping  in strongly disordered systems.  By using the method of  randomized algorithms  developed by Duminil--Copin et al.   we prove that in the subcritical phase the probability that the origin  is connected to some point  at distance $n$ decays exponentially in $n$, while in the supercritical phase the probability that the origin is connected to infinity is strictly positive and bounded from below by a term proportional to   $ (\l-\l_c)$, $\l$ being the density of the Poisson point  process and $\l_c$ being the critical density.

%

\medskip

\noindent {\em Keywords}:
Poisson point process,  random connection model, Boolean model,  Mott variable range hopping, Miller--Abrahams resistor network, \rosso{connection} probability, randomized algorithm.
\medskip

\noindent{\em AMS 2010 Subject Classification}: 
60G55, 
82B43, 
82D30 

\thanks{This work   has been  supported  by  PRIN
  20155PAWZB ``Large Scale Random Structures". }

\end{abstract}

\maketitle

\section{Introduction}

We take   the  homogeneous Poisson point process (PPP) $\xi$ on $\bbR^d$, $d\geq 2$, with density $\l$ conditioned to contain the origin. More precisely, $\xi$ is sampled according to the Palm distribution associated to the 
homogeneous PPP    with density $\l$, which is the same as sampling a point configuration $\z$ according to the homogeneous PPP with  density $\l$ and setting $\xi:= \z \cup\{0\}$. 

We start with    two random graphs with vertex set $\xi$: the random connection model $\cG_{\rm RC}= (\xi, \cE_{\rm RC})$  with radial  \rosso{connection} function $g$ 
\cite{M}   and the Miller--Abrahams random resistor network $\cG_{\rm MA}=(\xi, \cE_{\rm MA})$ with lower--bounded  conductances (above, $\cE_{\rm RC}$ and $\cE_{\rm MA}$ denote the edge sets).

The edges in $\cE_{\rm RC}$  are determined as follows. \rosso{Recall that the connection function $g: (0,+\infty) \to [0,1]$ is a given measurable function}.   Given a realization  $\xi$,   for any unordered pair   of sites $x\not= y$ in $\xi$    one declares    $\{x,y\}$ to be an edge    (i.e.  one sets $\{x,y\} \in \cE_{\rm RC}$)  with probability $g(|x-y|)$, independently from the other pairs of sites. 
In what follows, we write $\bbP ^{\rm RC}_{0,\l}$ for the law of the above random connection model (shortly, RC model).

\medskip

We now move to the Miller--Abrahams random resistor network, explaining first  the physical motivations.    This random resistor network has been   introduced by Miller and Abrahams in \cite{MA}   as 
 an effective model to study the conductivity via Mott variable range hopping in disordered solids, as doped semiconductors, in the regime of strong Anderson localization and  low impurity density. It has been   further developed by Ambegoakar et al. \cite{AHL} to give a more robust derivation of \rosso{Mott's law}  for the low temperature asymptotics of the conductivity \cite{FM,FM_phys,FSS,POF,SE}.  Recently developed new materials, as new organic doped seminconductors, enter into this class.

The Miller--Abrahams random resistor network is obtained  as follows. Given  a realization $\xi$ of  a generic simple point process, one samples i.i.d. random variables $(E_x)_{x\in \xi}$, called \emph{energy marks}, and \rosso{attaches} to any unordered pair of sites $x\not = y$ in $\xi$ a filament of conductance \cite{AHL,POF}
\begin{equation}\label{condu}
\exp\Big\{ 
- \frac{2}{\gamma} |x-y| -\frac{\b}{2} ( |E_x|+ |E_y|+ |E_x-E_y|) 
\Big\}\,.  
\end{equation}
Above $\g$ denotes the localization length  and $\b$ the inverse temperature (in what follows we take    $\gamma=2$ and $ \b=2$ without loss of generality). Note that the skeleton of the resistor network  is the complete graph on $\xi$.  
In the physical context of inorganic doped semiconductors, the relevant distributions of the energy marks have  density function $c |E|^\a dE$ supported  on some  interval $[-a,a]$, $c$ being the normalization  constant, where  $\a\geq 0$ and $a>0$. In this case, the physical \rosso{Mott's law} states that  the conductivity scales as $\exp\{ -C \beta^{ \frac{\a+1}{\a+1+d}} \}$ for some $\b$--independent constant $C$. We refer to \cite{FM_phys} for a \rosso{conjectured} characterization of the constant $C$. 

A  key tool \rosso{(cf.~\cite{FM})} to  rigorously upper bound  the  \rosso{conductivity} of the Miller--Abrahams resistor network is provided by the control on  the size of the clusters formed by  edges with high \rosso{conductance}, when these clusters remain finite, hence in a subcritical regime.  In particular, we are interested \rosso{in} the subgraph given by the   edges   $\{x,y\}$ such that 
\begin{equation}\label{gioioso}
 |x-y| +  |E_x|+ |E_y|+ |E_x-E_y|   \leq \z\,,
 \end{equation}
for some threshold $\z>0$ for which the resulting subgraph does not percolate.

 We point out that a lower bound of the \rosso{conductivity}  would require \rosso{(cf.~\cite{FSS})} a control on the \rosso{left--right  crossings} 
in the above subgraph when it percolates    (we will address this problem in a separate work). 
\rosso{To catch the constant $C$ in Mott's law  for the Miller--Abrahams  resistor network on a Poisson point process, one needs more information on the connection probabilities and on the left--right crossings than  what  used in \cite{FM,FSS}. For  the connection probabilities   this additional information will be provided by Theorem \ref{teo1} below}.

As discussed in \cite{FM_phys}, by the  scaling properties of the model, instead of playing with $\z$ we can fix the threshold $\z$ and vary the Poisson density $\l$.

\medskip

We  now  give a self--contained mathematical definition of $\cG_{\rm MA}=(\xi, \cE_{\rm MA})$.  To this aim we fix a probability distribution  $\nu$ on $\bbR$ and a threshold $\z>0$.  Given a realization  $\xi$ of the $\l$--homogeneous PPP  conditioned to contain the origin, we consider afresh a family of i.i.d. random variables $(E_x)_{x\in \xi}$ with common distribution $\nu$. For any unordered pair   of sites $x\not= y$ in $\xi$,    we  declare     $\{x,y\}$ to be an edge    (i.e.  we set $\{x,y\} \in \cE_{\rm MA}$)  if \eqref{gioioso} is satisfied.
In what follows, we write $\bbP ^{\rm MA}_{0,\l}$ for the law of the above random graph, and we will refer to this model simply as the MA model.

 We introduce the  function $h$ defined as
\begin{equation}\label{sereno}
h(u):= P(  |E|+|E'|+|E-E'|  \leq  \z-u) \,, \qquad u\in (0,+\infty)\,,
\end{equation}
where $E,E'$ are i.i.d. random variables with law $\nu$.   \rosso{In what follows we will use the following fact:
 \begin{Lemma}\label{campanella}  The following properties are equivalent:
\begin{itemize}
\item[(i)] The function $h$ is not constantly  zero;
\item[(ii)] The probability  measure $\nu$ satisfies 
 \begin{equation}\label{giostra}
 \nu\bigl ( \,(-\z/2, \z/2) \,\bigr)>0\,.
 \end{equation}
 \end{itemize}
  \end{Lemma}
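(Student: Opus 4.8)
The plan is to establish the two implications separately by elementary estimates on the random quantity $W:=|E|+|E'|+|E-E'|$ defining $h$ in \eqref{sereno}. Two trivial facts do all the work: first, $W\ge |E|+|E'|$ always; second, when $E,E'\ge 0$ one has $W=E+E'+|E-E'|=2\max(E,E')$, and symmetrically $W=-2\min(E,E')$ when $E,E'\le 0$ (more compactly $W=2\max\{E,E',0\}-2\min\{E,E',0\}$, but the two one--sided cases are all that is needed).

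For the implication $(i)\Rightarrow(ii)$ I would argue by contraposition. Assume \eqref{giostra} fails, i.e. $\nu\bigl((-\z/2,\z/2)\bigr)=0$; then $|E|\ge \z/2$ and $|E'|\ge \z/2$ almost surely, so $W\ge |E|+|E'|\ge \z$ almost surely. Hence for every $u\in(0,+\infty)$ the event $\{W\le \z-u\}$ has probability zero, which means $h(u)=0$ for all $u>0$, i.e. $h$ is constantly zero. This direction presents no difficulty.

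For $(ii)\Rightarrow(i)$, assume $\nu\bigl((-\z/2,\z/2)\bigr)>0$. Write $(-\z/2,\z/2)$ as the disjoint union $[0,\z/2)\cup(-\z/2,0)$; then at least one of the two pieces has positive $\nu$--measure. In the first case, continuity of $\nu$ along $[0,\z/2-\delta]\uparrow[0,\z/2)$ yields a $\delta\in(0,\z/2)$ with $p:=\nu\bigl([0,\z/2-\delta]\bigr)>0$; by independence the event $\{E\in[0,\z/2-\delta]\}\cap\{E'\in[0,\z/2-\delta]\}$ has probability $p^{2}>0$, and on it $W=2\max(E,E')\le \z-2\delta$, so $h(2\delta)\ge p^{2}>0$ and $h$ is not constantly zero. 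The case $\nu\bigl((-\z/2,0)\bigr)>0$ is handled the same way, replacing the interval by $[-\z/2+\delta,0]$ and using $W=-2\min(E,E')$ there.

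I do not expect a genuine obstacle: the statement is essentially a short computation. The only points requiring a little care are (a) restricting, in the converse direction, to an interval lying entirely on one side of the origin --- otherwise, with $E$ and $E'$ of opposite sign near $\pm\z/2$, the term $|E-E'|$ could push $W$ up to nearly $2\z$ and the argument would collapse; and (b) using continuity of the measure to replace the open interval in \eqref{giostra} by a closed sub-interval bounded away from $\pm\z/2$, which is exactly what produces the strictly positive slack $u=2\delta>0$ needed for $h(u)>0$.
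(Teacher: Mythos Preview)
Your argument is correct and follows the same overall strategy as the paper: the contrapositive for $(i)\Rightarrow(ii)$ via $W\ge |E|+|E'|$, and for $(ii)\Rightarrow(i)$ a split according to whether $\nu$ charges $[0,\z/2)$ or $(-\z/2,0]$, localizing $E,E'$ to a common one--sided interval.

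The one difference is in how that interval is chosen in the nonnegative case. You use the identity $W=2\max(E,E')$ on $[0,\infty)$ and take the full slab $[0,\z/2-\d]$, which gives the cleanest bound $W\le \z-2\d$. The paper instead picks $\ell$ as the infimum of $\operatorname{supp}(\nu|_{[0,\z/2)})$, works on the short interval $[\ell,\ell+\d]$, and uses the cruder estimate $|e|+|e'|+|e-e'|\le 2\ell+3\d$. Your route is tidier for proving the lemma itself; the paper's choice is deliberate because the specific pair $(\ell,\d)$ with $2\ell+3\d<\z$ is recycled (Remark~\ref{fiore_del_deserto}) to build a dominated Boolean sub--model in the proof of Proposition~\ref{moldavia}.
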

}

 The proof of Lemma \ref{campanella} is given in Section \ref{fuga}.  
 
 \smallskip
  
%
%
%
%

To state our main results we fix some notation.
We write $S_n$ for the boundary of the box $[-n,n]^d$, i.e.  $S_n=\{ x\in \bbR^d\,:\, \|x\|_\infty = n\}$ and we give  the following definition:
\begin{Definition}\label{urla}
 Given a point $x\in \bbR^d$ and given a graph $G=(V,E)$ in $\bbR^d$,  we say that $x$ is connected to $S_n$ in the graph $G$, and write $x \lr  S_n$, if  $x\in V$ and $x$ is connected in $G$ to some vertex $y\in V$ such that (i) $\|y\|_\infty \geq n$ if $\|x\|_\infty \leq n $ or (ii)   \rosso{$\|y\| _\infty \leq n$} if $\|x\|_\infty > n$. We say that a point $x\in \bbR^d$ is connected to infinity in $G$, and write $x\lr  \infty$, if  $x\in V$ and  for any $\ell >0$ there exists $y\in V $ with $\|y\|_\infty \geq \ell$  such that $x$ and $y$ are connected in $G$.
 \end{Definition}
 Both the RC model  \rosso{when   $0<\int  _0^\infty r^{d-1}g(r) dr <+\infty$} and the MA model  \rosso{when \eqref{giostra} is satisfied}  exhibit  a phase transition at some  critical density $\l_c \in (0, \infty)$:
\begin{equation}\label{PT}
\begin{cases}
 \l <\l_c \;\;  
\Longrightarrow \;\; \bbP^{RC/MA}_{0,\l} \bigl( 0   \lr    \infty\bigr)=0  \,,\\
 \l >\l_c \;\; \Longrightarrow\;\; \bbP^{RC/MA}_{0,\l} \bigl( 0   \lr  \infty\bigr) >0\,.
\end{cases}
\end{equation}
Above, and in what follows, we do not stress the dependence of the constants on the dimension $d$, the \rosso{connection} function $g$ (for the RC model), the distribution $\nu$ and the threshold $\z$ (for the MA model). 
\rosso{The above phase transition \eqref{PT} follows from  \cite[Theorem 6.1]{M} for the RC model  and from 
Proposition \ref{moldavia} in Section \ref{fuga}  for the MA model.}


\medskip


Following  the recent developments \rosso{\cite{DRT1,DRT2}} on percolation theory by means of decision trees (random algorithms) we can improve the knowledge of the above phase transition by providing more detailed information on the behavior of  the \rosso{connection} probabilities. To state our main result we need to introduce the concept of \emph{good function}:
\begin{Definition}\label{dea}
\rosso{A function $f:(0, +\infty)\to [0,1]$ is called \emph{good} if $f$ is positive on a subset of positive Lebesgue measure and  if there is a finite family of points $0 < r_1 <r_2 <\cdots < r_{m-1}<r_m$ such that (i) $f(r)=0$ for $r\geq r_m$  and (ii)
$f$ is uniformly continuous on $(r_i, r_{i+1})$ for all $i=0, \dots, m-1$, where $r_0:=0$.} 
\end{Definition}
\rosso{We point out that the function $h$ defined in \eqref{sereno}  is weakly decreasing and satisfies $h(u)=0$ for $u>\z$. In particular, due to  Lemma \ref{campanella},    $h$ is positive on a subset of positive Lebesgue measure if and only if \eqref{giostra} is satisfied.
  Moreover, due to  Lemma \ref{campanella},    if $\nu$ has a probability density which is  bounded  and which is strictly  positive on a subset of $(-\z/2, \z/2) $ of positive Lebesgue measure, 
 then the function $h$ is good. In particular, 
 if    $\nu$ has density function $c |E|^\a dE$ supported  on some  interval $[-a,a]$ (as in the physically relevant cases), then $h$ is good. }

\begin{TheoremA}\label{teo1}
Consider the random connection model $\cG_{\rm RC}$ with good radial \rosso{connection} function $g$. 
Consider   the  Miller--Abrahams model   $\cG_{\rm MA}$, associated to the distribution $\nu$ and the threshold $\z$,
and assume  that 
the function $h$ defined in \eqref{sereno} is good   (cf. Lemma \ref{campanella}). 
  In  both cases,  let the vertex set be  given by a Poisson point process with density $\l$ conditioned to contain the origin. 

Then for both models the following holds:
\begin{itemize}
\item (Subcritical phase) For any   $\l <\l_c$ there exists $c=c(\l)>0$ such that  
\begin{equation}\label{zecchino1}
\bbP_{0,\l}^{\rm RC/MA} \bigl( 0 \lr  S_n\bigr) \leq e^{- c\, n } \,, \qquad \forall n \in \bbN\,.
\end{equation}
\item (Supercritical phase)  There exists $C>0$ such that
\begin{equation}\label{zecchino2}
\bbP_{0,\l}^{\rm RC/MA} \bigl( 0 \lr  \infty \bigr) \geq C (\l -\l_c) \,, \qquad \forall \l >\l_c \,.
\end{equation}
\end{itemize}
\end{TheoremA}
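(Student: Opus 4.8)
\smallskip
\noindent\textbf{Proof strategy.}
The plan is to run the decision-tree (randomized algorithm) machinery of \cite{DRT1,DRT2} in the present continuum setting. Write $\theta_n(\l):=\bbP_{0,\l}^{\rm RC/MA}(0\lr S_n)$; this is non-increasing in $n$ (so $\Sigma_n(\l):=\sum_{k=0}^{n}\theta_k(\l)\ge(n+1)\theta_n(\l)$), non-decreasing and differentiable in $\l$ (one works throughout in a large finite box, letting it grow at the end), and $\theta_\infty(\l):=\lim_n\theta_n(\l)=\bbP_{0,\l}^{\rm RC/MA}(0\lr\infty)$. By Definition \ref{dea} both models have \emph{finite range}: the connection function vanishes beyond some $r_m$, and in the MA model an edge forces $|x-y|\le\z$; hence $\{0\lr S_n\}$ depends on the randomness only in a bounded region. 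I would first fix a finite-dimensional encoding: partition $\bbR^d$ into cubes of small side $\e$, attach to each cube $Q$ an independent variable $\omega_Q$ recording the points of $\xi$ in $Q$, their positions, and (for the MA model) their energy marks, and for the RC model add independent variables carrying the Bernoulli edge-decisions between pairs of cubes within range. Then $\{0\lr S_n\}$ is an increasing event of countably many independent variables, to which the OSSS inequality applies. The \emph{good function} hypothesis enters exactly here, guaranteeing that the discretized models approximate the continuum ones and that the (Lebesgue-null) discontinuity set of $g$, resp.\ $h$, produces no pathology; controlling the revealments and, above all, the influences for a merely measurable, non-monotone connection function — together with the extra edge/energy randomness — is where the argument must genuinely depart from the Bernoulli-percolation and Boolean-model templates, and is the main obstacle.

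\smallskip
\noindent\textbf{The differential inequality.}
For fixed $n$, consider the algorithm that samples $k$ uniformly in $\{0,1,\dots,n\}$ and then reveals the cluster $\cC(S_k)$ of the shell $S_k$ by an inward/outward exploration (legitimate because of finite range). Any connection from $0$ to $S_n$ must cross $S_k$, so $\{0\lr S_n\}=\{0\in\cC(S_k)\}\cap\{\cC(S_k)\cap S_n\ne\emptyset\}$ and the algorithm computes $\mathbf 1_{\{0\lr S_n\}}$. A cube $Q$ at $\ell_\infty$-distance $\ell$ from the origin is queried only if $\cC(S_k)$ comes within range of $Q$, which forces $Q\lr S_k$; by translation invariance this has probability at most $\theta_{|\ell-k|-O(1)}$ (a connection to the shell $S_k$ entails reaching $\ell_\infty$-distance $\ge|\ell-k|-O(1)$ from $Q$, with no surface factor). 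Averaging over $k$ gives the revealment bound $\delta_Q\le C\,\Sigma_n(\l)/n$, uniformly in $Q$ (and likewise for the edge variables). Feeding this into the OSSS inequality $\var(\mathbf 1_{\{0\lr S_n\}})\le(\max_i\delta_i)\sum_i\mathrm{Inf}_i$, using $\var(\mathbf 1_{\{0\lr S_n\}})=\theta_n(1-\theta_n)$ together with a Russo--Mecke-type bound $\sum_i\mathrm{Inf}_i\le c^{-1}\theta_n'(\l)$ (valid under the good-function hypothesis, which keeps edge/energy pivotality dominated by point pivotality), one obtains, for some $c>0$ and all $n\ge1$,
\begin{equation*}
\theta_n'(\l)\ \ge\ \frac{c\,n}{\Sigma_n(\l)}\,\theta_n(\l)\bigl(1-\theta_n(\l)\bigr).
\end{equation*}

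\smallskip
\noindent\textbf{Subcritical phase.}
Fix $\l<\l_c$ and choose $\l'\in(\l,\l_c)$. Since $\l'<\l_c$ one has $\theta_n(\l')\to0$, hence $\Sigma_n(\l')/n\to0$, and $1-\theta_n(\mu)\ge\tfrac12$ for $\mu\le\l'$ once $n\ge n_0$. Integrating the differential inequality over $[\l,\l']$ and using $\Sigma_n(\mu)\le\Sigma_n(\l')$ gives $\theta_n(\l)\le\exp\!\bigl(-\tfrac{c(\l'-\l)}{2}\,n/\Sigma_n(\l')\bigr)$. Since $\Sigma_n(\l')/n\to0$, a bootstrap as in \cite{DRT1,DRT2} — the inequality self-improves any a priori control of $\Sigma_n$, and $k\mapsto k/\Sigma_k$ is non-decreasing — first upgrades this to $\sum_n\theta_n(\l)<\infty$ for every $\l<\l_c$; the exponential decay \eqref{zecchino1} then follows by applying the inequality once more between $\l$ and some $\l''\in(\l,\l_c)$ with $\sum_n\theta_n(\l'')<\infty$.

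\smallskip
\noindent\textbf{Supercritical phase.}
Fix $\l>\l_c$; by \eqref{PT}, $\theta_\infty(\mu)>0$ for all $\mu\in(\l_c,\l]$. From $\theta_k(\mu)\to\theta_\infty(\mu)>0$ one gets $\Sigma_n(\mu)/n\to\theta_\infty(\mu)$, whence $n\,\theta_n(\mu)/\Sigma_n(\mu)\to1$. Fixing $\mu_0\in(\l_c,\l)$, integrating the differential inequality over $[\mu_0,\l]$, and using Fatou's lemma (the integrand $\theta_n'$ is nonnegative) together with these two limits and the monotonicity of $\theta_\infty$, one obtains
\begin{equation*}
\theta_\infty(\l)-\theta_\infty(\mu_0)\ \ge\ c\int_{\mu_0}^{\l}\bigl(1-\theta_\infty(\mu)\bigr)\,d\mu\ \ge\ c\,\bigl(1-\theta_\infty(\l)\bigr)\,(\l-\mu_0).
\end{equation*}
Letting $\mu_0\downarrow\l_c$ yields $\theta_\infty(\l)\ge c\bigl(1-\theta_\infty(\l)\bigr)(\l-\l_c)$, i.e.\ $\theta_\infty(\l)\ge c(\l-\l_c)/\bigl(1+c(\l-\l_c)\bigr)$, which gives \eqref{zecchino2} with a suitable $C>0$ (the bound being of interest for $\l$ close to $\l_c$).
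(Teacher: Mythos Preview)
Your approach is essentially the paper's: OSSS applied to a discretized product model, with the exploration-from-$S_k$ algorithm of \cite{DRT2} to control revealments, and a Russo--type identity to convert influences into a $\l$-derivative, leading to a DRT differential inequality and hence sharp threshold. Two points of execution differ and are worth flagging.

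First, the paper does not run OSSS under the Palm measure with cube variables $\omega_Q$ storing the full local configuration. Instead it passes to a \emph{Bernoulli} site model on $\G_\e=\e\bbZ^d\cap\L_{n+1}$ with occupation probability $p_\l(\e)=\l\e^d/(1+\l\e^d)$, works with the \emph{unconditioned} connection probability $\theta_n^{(\e)}(\l)=\bbP_\l^{(\e)}(0\lr S_n)$, and with $\tilde\psi_n(\l)=\l\tilde\theta_n(\l)$ rather than $\tilde\theta_n$ itself. This choice is not cosmetic: it makes $(1-\theta_n^{(\e)})^{-1}$ uniformly bounded (since $\theta_n^{(\e)}\le p_\l(\e)=O(\e^d)$), it matches the Russo identity exactly via $\tilde\psi_n'(\l)=\bbE_{0,\l}[|\text{Piv}_+(0\lr S_n)|]=\lim_{\e\downarrow 0}\bbE_\l^{(\e)}[|\text{Piv}(0\lr S_n)|]$, and it lets one invoke \cite[Lemma~3]{DRT2} directly on the increasing functions $\l\mapsto c_0\tilde\psi_n(\l)$, rather than integrating by hand as you do. Your claim ``$\sum_i\mathrm{Inf}_i\le c^{-1}\theta_n'(\l)$'' is exactly the content of the paper's Proposition on discretization together with its Lemmas bounding site influences by $c\,\e^d\,\bbP(i\in\text{Piv})$ and showing that the total edge/energy contribution to $\sum_i\delta_i\mathrm{Inf}_i$ is $o(\e^d)$; the good-function hypothesis is used precisely in the $\e\downarrow 0$ limit of these statements, where uniform continuity of $g$ (resp.\ $h$) off a finite set controls $|g(|x-y|)-g(|\bar x-\bar y|)|$ under the vertex-sliding that compares the discrete and continuum graphs.

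Second, your revealment estimate needs a small correction: when $Q$ is queried it is a \emph{neighbour} $j$ of $Q$ that is connected to $S_k$, not $Q$ itself, and the cubes on the initial shell $S_k$ are queried deterministically. The paper handles both points explicitly (summing over $j$ with $|i-j|\le 1$ and splitting off the case $i\in H_\e^k$), and only then obtains the uniform bound $\tfrac1n\sum_k\delta_i(T_k)\le c\,\e^{-d}\tfrac1n\sum_{a=0}^{n-1}\theta_a^{(\e)}$.
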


\subsection{Extension to other Poisson models} We point out that the arguments presented in the proof of Theorem \ref{teo1} are robust enough to be applied to other random graphs on the Poisson point process with uniformly bounded edge length. 
We discuss  here  the Poisson Boolean model $\cG_B$ \cite{M}. Let 
  $\nu\not=\d_0$  be a  probability distribution with  bounded support in $[0,\infty)$. Given a realization $\xi$ of
   the  PPP with density $\l$ conditioned to contain the origin, let $(A_x)_{x\in \xi}$ be i.i.d. random variables with common law $\nu$.
The graph $\cG_{B}= (\xi,\cE_B)$ is  then defined  by declaring $\{x,y\}$, with $x\not =y$ in $\xi$, to be an edge  in $\cE_B$ if and only if $|x-y| \leq A_x+A_y$. It is known that the model exhibits a phase transition for some $\l_c \in (0,+\infty)$ as in \eqref{PT}.

The reader can check that  the proof of Theorem \ref{teo1} for the MA model  
  can be easily  adapted  to the Boolean model (the latter is even simpler) if one  takes now
   \begin{equation}\label{serenobis}
 h(u):=P( u\leq  A+A'
)\,, \qquad u \in (0,+\infty)\,,
\end{equation}
where $A,A'$ are i.i.d. with law $\nu$, and if one assumes  $h$ to be good. 


We collect the above observations in the following theorem:
\begin{TheoremA} Consider the  Poisson Boolean model $\cG_B$ with radius law $\nu\not=\d_0$ having  bounded support and such that the function $h$ defined in  \eqref{serenobis} is good. 
Let the  vertex set be  given by a Poisson point process with density $\l$ conditioned to contain the origin. 
Then the thesis  of Theorem \ref{teo1} remains true in this context, where $\l_c$ is the critical density for the Poisson Boolean model \cite{M}.
\end{TheoremA}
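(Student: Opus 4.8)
\medskip

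\noindent\textbf{Proof strategy.} The plan is to run, \emph{mutatis mutandis}, the proof of Theorem \ref{teo1} for the Miller--Abrahams model, replacing throughout the function $h$ of \eqref{sereno} by the function $h$ of \eqref{serenobis}. First I would check that this substitution is legitimate. Since $\nu$ has bounded support, $h(u)=P(u\leq A+A')$ vanishes for $u> 2\,\sup(\operatorname{supp}\nu)$, and $h$ is clearly weakly decreasing; hence $h$ has exactly the structural features (weak monotonicity, compact support, and goodness by hypothesis) used in the MA argument, and it plays there the same role it plays here, namely that of the \emph{effective connection probability at distance $u$}: in $\cG_B$ the edge $\{x,y\}$ is present iff $|x-y|\leq A_x+A_y$, so marginalising over the radii of two points at mutual distance $r$ yields precisely $h(r)$; in particular $\cG_B$ has uniformly bounded edges, with deterministic maximal edge length $2\,\sup(\operatorname{supp}\nu)$.

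Next I would reproduce the skeleton of the argument. One tessellates $\bbR^d$ into a grid of boxes of side $\e$ and encodes the environment as a family of random variables, independent across boxes, recording the (finite) point configuration of the PPP together with its radii marks inside each box. Writing $\theta_n(\l):=\bbP^{\rm B}_{0,\l}(0\lr S_n)$, one designs a randomized decision tree that, after sampling a uniformly random radius $L\in\{0,1,\dots,n-1\}$, explores the connected component of the sphere $S_L$ box by box; using the uniform bound on edge lengths one shows that the revealment of a box lying at distance $\sim k$ from the origin is bounded, up to constants, by $\theta_{|k-L|}$, so that after averaging over $L$ the maximal revealment $\delta$ obeys $\delta\lesssim\Sigma_n(\l)/n$ with $\Sigma_n(\l):=\sum_{k=0}^{n-1}\theta_k(\l)$. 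Plugging this revealment bound into the OSSS inequality and combining it with a continuum Russo-type formula for $\partial_\l\theta_n$ --- obtained from Mecke's formula, a fresh point added at $z$ with a fresh radius mark being pivotal for $\{0\lr S_n\}$ with a probability which, integrated over $z$, reproduces the total influence up to constants --- one reaches, as in \cite{DRT1,DRT2}, the differential inequality
\begin{equation*}
\theta_n'(\l)\ \geq\ \frac{c\,n}{\Sigma_n(\l)}\;\theta_n(\l)\bigl(1-\theta_n(\l)\bigr)\,,\qquad n\in\bbN\,,
\end{equation*}
on a neighbourhood of $\l_c$. The dichotomy lemma of \cite{DRT1} then delivers \eqref{zecchino1} for $\l<\l_c$ and \eqref{zecchino2} for $\l>\l_c$, exactly as in Theorem \ref{teo1}.

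Then I would check the three points specific to the Boolean model. \textbf{(a)} The edge event $|x-y|\leq A_x+A_y$ is increasing in each radius mark, and the model is increasing under addition of PPP points, so $\cG_B$ is a monotone model: $\l\mapsto\theta_n(\l)$ is non-decreasing, FKG holds, and the continuum Russo formula has the correct sign. This is in fact where the Boolean case is strictly simpler than MA, for which the dependence on the energy marks is not monotone and the detour through $h$ is genuinely needed. \textbf{(b)} In the continuum limit $\e\to0$ the contribution of a small box to the pivotality probability converges to its continuum value; the error terms arise from pairs of points whose mutual distance is near a discontinuity radius of $h$, and these are controlled because $h$ is good, i.e. uniformly continuous off the finite set $\{r_1,\dots,r_m\}$ --- this (together with the positivity part of goodness, which ensures $\l_c\in(0,\infty)$) is where goodness of $h$ enters, and the estimates are identical to those for MA. \textbf{(c)} Since $h$ vanishes beyond $2\,\sup(\operatorname{supp}\nu)$, edges are deterministically short, which is what localises the exploration and validates the revealment bound above.

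The genuine work --- and the step I expect to be the main obstacle --- is matching the discretised total influence with $\partial_\l\theta_n$ together with the revealment estimate, i.e.\ the two ingredients that feed the differential inequality. But since, once marginalised, the Boolean edge mechanism is governed by the good, compactly supported, weakly decreasing function $h$ of \eqref{serenobis} in precisely the way the MA edge mechanism is governed by the $h$ of \eqref{sereno}, no new analytic difficulty arises, and the argument of Theorem \ref{teo1} transfers with only notational changes. This would complete the proof.
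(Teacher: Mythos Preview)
Your proposal is correct and follows exactly the route the paper intends: the paper does not give a separate proof of this theorem but simply states that the proof of Theorem~\ref{teo1} for the MA model ``can be easily adapted to the Boolean model (the latter is even simpler)'' once one replaces the function $h$ of \eqref{sereno} by that of \eqref{serenobis}, and your sketch is precisely this adaptation. Your identification of where goodness of $h$ enters (the discretisation error in the analogue of Lemma~\ref{uffa3}) and where the bounded edge length enters (the revealment estimate, i.e.\ the analogue of Lemma~\ref{firenze}) matches the paper's architecture; the only cosmetic differences are that the paper works with the functions $\tilde\psi_k=\l\tilde\theta_k$ rather than $\theta_k$ directly, uses a Bernoulli--site discretisation rather than recording the full box configuration, and invokes \cite[Lemma~3]{DRT2} rather than \cite{DRT1} for the dichotomy.
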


\rosso{We point out that the above result has been obtained, in part with different techniques,  in \cite{Z}.
}
%
%
%

\section{Phase transition in the MA model}\label{fuga}
In this section we prove Lemma \ref{campanella} and also show that the phase transition \eqref{PT} takes place in the MA model.

\smallskip

We start with Lemma \ref{campanella}:

\noindent
\emph{Proof of Lemma \ref{campanella}}.
Let us first show  \rosso{that Items (i) and (ii) are equivalent.}  Suppose first that \eqref{giostra} is violated and let $E,E'$ be as in \eqref{sereno}. Then a.s.  we have  $|E|\geq \z/2$ and  
$|E'|\geq \z/2$, thus implying that $P(|E|+|E'|+|E-E'| \geq \z)=1$ and therefore $h(u)=0 $ for any $u>0$. Suppose now that \eqref{giostra} is satisfied. Then it must be $\nu\bigl ([0,\z/2)\bigr)>0$ or  $\nu\bigl ((-\z/2,0]\bigr)>0$. We analyze  the first case, the other is similar.  Consider the measure $\nu_*$ given by $\nu $ restricted to $[0, \z/2)$. Let $\ell$ be the minimum of the support of $\nu_*$. Then for each $\d>0$ it holds $\nu\bigl([\ell, \ell+\d]\bigr)>0$. Since $\ell <\z/2$ we can fix  $\d>0$ such that $2 \ell +3 \d<\z$.  
 Take now $E,E'$ i.i.d. random variables with law $\nu$.
If $E,E'\in [\ell, \ell+\d]$, then $|E|+|E'|+|E-E'| \leq 2\ell+ 3\d \leq \z-u$  for any  $u>0$ such that $2 \ell +3 \d\leq \z-u$ (such a $u$ exists).   This implies that 
$h(u) \geq P\bigl( E,E'\in [\ell, \ell+\d]\bigr)=  \nu\bigl([\ell, \ell+\d]\bigr)^2>0$, hence $h$ is not constantly zero. \rosso{This completes the proof that Items (i) and (ii) are equivalent}.
\qed

\begin{Remark}\label{fiore_del_deserto}
We point out that in the above proof we have shown the following  technical fact which will be  used  in \rosso{the proof of Proposition  \ref{moldavia}}.   \rosso{If  $\nu\bigl([0,\z/2)\bigr)>0$,}  
 then  there are $\ell\geq 0$ and $\d>0$ such that (i) $ 2\ell + 3 \d<\z$, (ii)  $\nu(  [\ell, \ell+\d])>0$, (iii)   if  $e, e' \in [\ell, \ell+\d]$ then $u+|e|+|e'|+|e-e'|\leq \z$ for any $u\in(0, \z- 2\ell -3 \d]$. On the other hand, if \rosso{$\nu\bigl((-\z/2,0]\bigr)>0$},  then  there are $\ell\geq 0$ and $\d>0$ such that (i) $ 2\ell + 3 \d<\z$, (ii)  $\nu\bigl( [-\ell-\d, -\ell]\bigr)>0$, (iii)   if  $e, e' \in  [-\ell-\d, -\ell]$ then $u+|e|+|e'|+|e-e'|\leq \z$ for any $u\in(0, \z- 2\ell -3 \d]$. \rosso{Note that, due to Lemma \ref{campanella}, when $h\not \equiv 0$ the above two cases are exhaustive.}
\end{Remark}

\begin{Proposition}\label{moldavia} 
 There exists $\l_c \in (0,+\infty)$ such that the phase transition \eqref{PT} takes place in the MA model when \rosso{$h$ is not constantly  zero, equivalently when \eqref{giostra} holds (cf. Lemma \ref{campanella})}.
\end{Proposition}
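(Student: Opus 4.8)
The plan is to prove Proposition~\ref{moldavia} by a standard two-part argument: first show $\l_c<\infty$ (i.e. percolation occurs for large $\l$) by a coarse-graining/comparison with supercritical Bernoulli site percolation on $\bbZ^d$, and then show $\l_c>0$ (i.e. no percolation for small $\l$) by a first-moment/path-counting estimate, dominating the MA cluster by a subcritical branching process. Monotonicity of $\bbP^{\rm MA}_{0,\l}(0\lr\infty)$ in $\l$ (via the standard coupling of Poisson processes with different intensities, enriched by the i.i.d.\ energy marks) gives that the set of $\l$ with positive percolation probability is an up-set, so that $\l_c:=\inf\{\l:\bbP^{\rm MA}_{0,\l}(0\lr\infty)>0\}$ is well-defined once both bounds are in place.

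For the lower bound $\l_c>0$ I would condition on the energy marks being irrelevant and just use that every MA edge $\{x,y\}$ satisfies $|x-y|\le\z$ (from \eqref{gioioso}, since all the energy terms are nonnegative). Hence the MA cluster of the origin is contained in the cluster of the origin in the random graph where $x\sim y$ whenever $|x-y|\le\z$, which is the Boolean/Gilbert disc model with deterministic radius $\z/2$ on a $\l$-PPP (plus the origin). That model is well-known to be subcritical for $\l$ small; alternatively one runs a direct exploration: the expected number of points of $\xi$ within distance $\z$ of a given point is $\l\,\omega_d\z^d$, so a Mecke-formula computation bounds the expected number of self-avoiding MA-paths of length $k$ from the origin by $(\l\,\omega_d\z^d)^k$, which is summable for $\l<(\omega_d\z^d)^{-1}$, forcing the cluster to be a.s.\ finite. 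This part is routine.

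The substantive part is the upper bound $\l_c<\infty$, and here the hypothesis that $h\not\equiv0$ (equivalently \eqref{giostra}) is exactly what is needed — without it the MA graph has no edges at all almost surely. I would tile $\bbR^d$ by boxes of a fixed side $L$ (to be chosen), and declare a box \emph{good} if it contains at least one point $x$ of $\xi$ with energy mark $E_x$ lying in the favorable window $[\ell,\ell+\d]$ (or $[-\ell-\d,-\ell]$) supplied by Remark~\ref{fiore_del_deserto}. By Remark~\ref{fiore_del_deserto}(iii), any two such ``favorable'' points $x,y$ in adjacent boxes, being at distance $\le cL$ for a suitable constant, satisfy $|x-y|+|E_x|+|E_y|+|E_x-E_y|\le cL+2\ell+3\d\le\z$ provided $L$ is chosen small enough relative to $\z-2\ell-3\d>0$; hence they are joined by an MA edge. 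The probability that a given box is good is $1-\exp\{-\l L^d\,\nu([\ell,\ell+\d])\}$ (the PPP restricted to a box being Poisson and the marks independent), which tends to $1$ as $\l\to\infty$ with $L$ fixed. Since box-goodness is a $1$-dependent (indeed independent, for the PPP) field, for $\l$ large it stochastically dominates supercritical Bernoulli site percolation on $\bbZ^d$; an infinite $*$-connected cluster of good boxes then yields an infinite MA cluster, and a standard argument recovers that the origin itself percolates with positive probability (e.g.\ by further insisting, at an extra Poisson cost, that the box containing $0$ be good and by using the conditioning that $0\in\xi$). The only mild subtlety is bookkeeping the geometric constant relating $L$ to the adjacency distance so that the inequality $cL+2\ell+3\d\le\z$ genuinely holds — but since $2\ell+3\d<\z$ strictly, there is room, and choosing $L$ small then $\l$ large closes the argument.

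To summarize the order of steps: (1) note every MA edge has Euclidean length $\le\z$; (2) deduce $\l_c>0$ by domination by a subcritical Gilbert disc graph, or by direct path counting via the Mecke formula; (3) establish monotonicity in $\l$ so $\l_c$ is well-defined; (4) for the upper bound, invoke Remark~\ref{fiore_del_deserto} to pick the favorable energy window, tile by boxes of small side $L$, define good boxes, and verify adjacent good boxes are MA-connected; (5) compare the good-box field with supercritical site percolation on $\bbZ^d$ for $\l$ large, and conclude $\bbP^{\rm MA}_{0,\l}(0\lr\infty)>0$, whence $\l_c<\infty$. The main obstacle is purely the geometric calibration in step (4)–(5); everything else is standard percolation technology.
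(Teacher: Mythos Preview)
Your proposal is correct. The monotonicity step and the lower bound $\l_c>0$ (domination by the deterministic--radius Gilbert/Boolean graph with connection range $\z$) are exactly what the paper does.

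For the upper bound $\l_c<\infty$ the paper takes a more direct route than your coarse--graining. Rather than tiling by boxes and comparing the good--box field with Bernoulli site percolation on $\bbZ^d$, the paper simply \emph{thins} the PPP to the set of points $x$ with $E_x\in[\ell,\ell+\d]$ (which, by the marking theorem, is again a homogeneous PPP of density $\l\,\nu([\ell,\ell+\d])$), and puts an edge between two thinned points whenever $|x-y|\le\z-2\ell-3\d$. By Remark~\ref{fiore_del_deserto} this subgraph sits inside $\cG_{\rm MA}$, and it is \emph{exactly} a Poisson Boolean model with deterministic radius, whose supercriticality for large intensity is cited from~\cite{M}; conditioning on the positive--probability event $E_0\in[\ell,\ell+\d]$ then gives $\bbP^{\rm MA}_{0,\l}(0\lr\infty)>0$ for $\l$ large. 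The paper's route is shorter and sidesteps the geometric calibration you flag in step~(4)--(5); your block argument, on the other hand, is more self--contained in that it reduces to discrete Bernoulli site percolation rather than invoking the continuum Boolean phase transition as a black box. Either approach is perfectly adequate here.
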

The proof of the above proposition is a generalization of \rosso{the} one given in \cite{FM_phys}, in which  $\nu$ is the physically relevant distribution $\nu= c|E|^\a dE$.
\begin{proof}Since two Poisson point processes (possibly conditioned to contain the origin) with density $\l<\l'$ can be coupled  in a way that 
the one with smaller density is contained in the other, we get that the function $\phi(\l):=\bbP^{\rm MA}_{0,\l} ( 0 \lr +\infty)$ is weakly increasing.  Hence, to get the thesis it is enough to exhibit positive $\l_m, \l_M$ such that $\phi (\l_m)=0$ and $\phi(\l_M)>0$.

Let us consider the graph $\cG^*_{\rm MA} =(\xi, \cE_{\rm MA}^*)$ where a pair of sites $x\not =y$ in $\xi$ forms an edge $\{x,y\}\in \cE_{\rm MA}^*$ if and only if $|x-y|\leq \z$. Trivially,  $\cG_{\rm MA}   \subset \cG^*_{\rm MA} $. On the other hand, by the property of the Poisson Boolean model,  the event $\{ 0\lr +\infty \text{ in } \cG^*_{\rm MA} \}$ has probability zero  for  \rosso{$\l$ small enough. This proves that  $\phi(\l)=0$ for $\l$ small enough.}

Now take $\ell, \d$ as in Remark \ref{fiore_del_deserto}. We treat the case $\nu([0,\z/2))>0$, the complementary case 
$\nu ( (-\z/2,0])>0 $ is similar.
Given a realization $\xi$ of the point process and given random variables $(E_x)_{x\in\xi}$ as in the Introduction,  we build a new graph $\hat \cG_{\rm MA}=\bigl(\hat \cV_{MA},  \hat \cE_{\rm MA} \bigr) $ as follows. 
As vertex set $\hat \cV_{MA}$ we take $\{x\in \xi\,:\, E_x\in [\ell, \ell+\d]\}$.  We say that a pair of sites $x\not =y$ in $\hat \cV_{MA} $ forms an edge $\{x,y\}\in \hat \cE_{\rm MA}$ if and only if $|x-y|\leq \z-2\ell -3\d$. By Remark \ref{fiore_del_deserto} if $\{x,y\}\in \hat \cE_{\rm MA}$ then \eqref{gioioso} is satisfied, and therefore $\{x,y\} \in \cE_{\rm MA}$. We have therefore that $ \hat \cG_{\rm MA} \subset  \cG_{\rm MA}$.
On the other hand, with positive probability we have $E_0\in  [\ell, \ell+\d]$, i.e. $0\in \hat \cV_{\rm MA}$, and conditioning to this event  $\hat \cG_{\rm MA}$ becomes  a Boolean model on a PPP  with density $\l \nu (  [\ell, \ell+\d]) $ conditioned to contain the origin, where two points $x,y$ are connected by an edge if and only if $|x-y|\leq \z-2\ell -3\d$. By the properties of the Poisson Boolean model \cite{M} if $\l$ is large enough with positive probability we have $0\lr +\infty$ in  $\hat \cG_{\rm MA}$.  Since $ \hat \cG_{\rm MA} \subset  \cG_{\rm MA}$, this proves that \rosso{$\phi(\l)>0$ for $\l$ large enough}.
\end{proof}

\section{Outline of the proof of Theorem \ref{teo1}}
In this section we outline the proof of Theorem \ref{teo1}. Further details are given in the remaining sections.

\medskip

\begin{Warning}
Without loss of generality we assume, here and in what follows, that $g(r)=0$ for $r\geq 1$ in the RC model, and that  $\z<1$ in the MA model.
\end{Warning}


\subsection{Probability $\bbP_{0,\l}$ and $\bbP_\l$}\label{solitario}
We write $\cN$ for the space of possible \rosso{realizations} of a point process \rosso{in $\bbR^d$} \cite{DVJ}. We denote by $P_\l$   the law on $\cN$ of the $\l$--homogeneous Poisson point process and by  $P_{0,\l}$  the associated
Palm distribution. 
As in \cite[Sections 1.4, 1.5]{M}, given $k\in\mathbb{Z}^d$ and $n\in\mathbb{N}$, we define the binary cube of order $n$
$$\mathcal{K}(n,k):=\prod_{i=1}^d(k_i\,2^{-n},(k_i+1)\,2^{-n}].$$
Given $x\in \bbR^d $ there exists a unique binary cube of order $n$, say $\mathcal{K}(n,k(n,x))$, that contains $x$. Moreover, both for $P_\l$--a.e. $\xi$ and for $P_{0,\l}$--a.e. $\xi$,  for each $x \in \xi$  there exists a unique smallest number $n(x)$ such that $\mathcal{K}(n(x),k(n(x),x))$ contains no other point of $\xi$.

We then consider a separate  probability space $(\Sigma, P)$. For the RC model we take 
$\Sigma  = [0,1]^{\cR }$, $\cR=\bigl\{\bigl( (n_1,k_1), (n_2,k_2) \bigr) : n_1, n_2 \in \bbN  \,,\;   k_1,k_2 \in \bbZ^d\}$, and let   $P$ be the product probability measure on $\Sigma$ with marginals given by the uniform distribution on $[0,1]$.
For the MA resistor network we take 
$\Sigma  = \bbR^{\cR }$,
$\cR=\bigl\{\,(n,k), \,:\,  n \in \bbN \,,\;    k \in \bbZ^d   \}$, and let   $P$ be  the product probability measure on $\Sigma$ with marginals given by $\nu$.
Finally, we take  the following probabilities on $\cN\times \Sigma$:
\[
\bbP_\l := P_\l \times P\,, \qquad \bbP_{0,\l}:= P_{0,\l} \times P\,.
\]
We write $\s$  for a generic element of $\Sigma$. When treating the RC model,  given $x\not =y$ in $\rosso{\xi}$   we shorten the notation by writing $\s_{x,y}$ for 
 $\s_{(n_1,k_1),(n_2,k_2) }$ where 
\begin{equation}\label{Babbo}
(n_1,k_1):= \bigl(n(x), k (n(x),x)  \bigr)\,,\;(n_2,k_2):= \bigl(n(y), k (n(y),y)  \bigr)\,.
\end{equation}
Similarly, when treating the MA model, given $\rosso{x\in\xi}$ we write $\s_x$ for $\s_{n,k}$ where $(n,k)=\bigl(n(x), k (n(x),x)  \bigr)$.

\medskip

In what follows we write $\prec_{\rm lex}$ for the lexicographic order on $\bbR^d$.
To   a generic element $(\xi,\s) \in \cN \times \Sigma$  we  associate a graph $\cG=(\cV, \cE)$ defined as follows. We set $\cV:= \xi$ for the vertex set.  In the RC model 
we define the edge set   $\cE $ as the set of  pairs $\{x,y\}$ with $x \prec_{\rm lex} y$ in $\xi$ 
such that $\s_{x,y}\leq g(|x-y|)$. When treating the MA model we define  $\cE $ as the set of  pairs $\{x,y\}$ 
with $x \not= y$ 
 such that  
\[ |x-y| + |\s_x|+ |\s_y|+ |\s_x-\s_y| \leq \z\,.\]

Then the law of $\cG(\xi,\s)$ with $(\xi, \s)$ sampled according to $P_{0,\l}$ equals
$\bbP^{\rm RC}_{0,\l}$  in the RC model, while it equals $\bbP^{\rm MA}_{0,\l}$  in the MA model. In particular,  the phase transition \eqref{PT} can be stated directly for the probability $\bbP_{0,\l}$, and  to prove Theorem \ref{teo1} it is enough to
consider $\bbP_{0,\l}$ instead of $\bbP_{0,\l}^{RC/MA}$.
 Note that when $(\xi, \s)$ is sampled according to $\bbP_{\l}$, the graph $\cG(\xi,\s)$ gives a realization of $\cG_{\rm RC}/\cG_{\rm MA}$ with \rosso{the} exception that now $\xi$ is sampled according to a $\l$--homogeneous Poisson point process.

\subsection{Discretisation}\label{poligono}
We point out that, due to our assumptions, the graph $\cG$ has all  edges of length strictly smaller than $1$, both in the RC model and in the MA model.

Given \rosso{a positive integer $n$ and given} $k=0,1, \dots, n$, we define the functions
\begin{equation}
\label{solare}
  \tilde \theta_k  (\l)  := \bbP_{0,\l} \bigl ( 0 \lr  S_k   \bigr)\,, \qquad \tilde \psi_k (\l):= \l  \tilde \theta_k  (\l) \,.
\end{equation}
\begin{Warning}\label{fragola}
Above, and in what follows, we convey  that, when considering $\bbP_{0,\l}$ or the associated expectation $\bbE_{0,\l}$,  graphical statements as `` $0\lr S_k$''   refer  to the random graph $\cG$, if not stated otherwise. The same holds for $\bbP_\l$ and $\bbE_\l$.
\end{Warning}

We have $
\tilde \theta_k  (\l) =\bbP_{0,\l} \bigl ( 0 \lr  S_k   \bigr) = \bbP_\l \bigl( 0 \lr S_k\text{ in } \cG(\xi \cup\{0\}, \s)\bigr)$.
Due to \cite[Thm. 1.1]{JZG} (which remains valid when considering the additional random field $\s$), the derivative $\tilde \theta_n'(\l)$ of $\tilde \theta_n(\l)$ can be expressed as follows:
\begin{equation}\label{rana}
\tilde \theta _n ' (\l)=\frac{1}{\l} \bbE_{0,\l} \bigl[ | \text{Piv}_+ ( 0 \lr  S_n ) \setminus \{0\} | \bigr]\,,
\end{equation}
where $ \text{Piv}_+ ( 0 \lr  S_n )$ denotes the set of points which are  $(+)$--pivotal for the  event $0 \lr  S_n$. 
We recall that given an event $A$ in terms of the graph $\cG$  and a configuration $(\xi, \s) \in \cN \times \Sigma$, 
a point  $x\in \bbR^d$ is  called  \emph{$(+)$--pivotal} for the  event $A$ and the configuration $(\xi, \s)$,  if (i)  $x \in \xi$, (ii) the event $A$ takes place  for the graph  $\cG (\xi, \s)$, (iii) the event $A $ does not take  place in  the graph obtained from  $\cG ( \xi , \s)$ by removing the vertex $x$ and all edges containing $x$.

Note that $ \bbP_{0,\l} ( 0\in \text{Piv}_+ ( 0 \lr  S_n )  )=\bbP_{0,\l}  ( 0 \lr  S_n )  =\tilde \theta_n (\l)$. Hence, from \eqref{rana} we get 
\begin{equation}\label{rospo}
\tilde \psi _n'(\l)= \tilde \theta_n (\l) +\l \tilde \theta _n ' (\l) = \bbE_{0,\l} \bigl[ | \text{Piv}_+ ( 0 \lr  S_n )  | \bigr] \,.
\end{equation}




The first step in the proof of Theorem \ref{teo1}  is to  approximate  the functions $\tilde \psi_n (\l)$ and $\tilde \psi'_n (\l)$ in terms of suitable random graphs built on a grid. 
To this aim, we introduce the scale parameter $\e$  of the form  $\e=1/m$, where \rosso{$m\geq 2$ is an  integer}.  Moreover we set
\begin{align*}
& \L_k:=[-k,k)^d\,, \;S_k:=\partial \L_k= \{x \in \bbR^d\,:\, \|x\|_\infty =k \}\,;\\
& R_{x}^{\e}:=x+[0,\e)^d \text{ where } x\in \e\bbZ^d\,, \\
& \G_{\e}:=\{x\in\e\bbZ^d\,|\,R_x^{\e}\subset \L_{n+1}\}\,,  
\end{align*}
and 
\begin{equation}\label{gelsomino}
W_\e:=\begin{cases}
\bigl \{\{x,y\}\,|\,\rosso{x\not=y \text{ in }} \G_{\e}\,,  g(|x-y|)>0\} & \text{ for the RC model}\,,\\
\G_\e & \text{ for the MA  model}\,.
\end{cases}
\end{equation}

We then consider the product space $\O_\e:=\{0,1\}^{\G_\e} \times \bbR ^{W_\e}$ and write $(\eta^\e, \s^\e)$ for a generic element of $\O_\e$. We endow $\O_\e$  with the product probability measure  $\bbP^{(\e)}_\l $ making  $\eta_x^\e$, as $x $ varies in $ \G_\e$,   a Bernoulli random variable with parameter
\begin{equation}\label{pierpi}
\bbP^{(\e)}_\l (\eta_x ^\e=1)= p_\l(\e):=\frac{\l \e^d }{  1+\l \e^d}\,, 
\end{equation}
and making $\s^\e_w$, as $w$ varies in $W_\e$,  a random variable with uniform distribution on $[0,1]$ when considering the RC model, and with distribution $\nu$ when considering the MA model.
To  $( \eta^\e, \s^\e)\in \O_\e$ we associate the  graph $G_\e=(V_\e, E_\e)$ built as follows.   We set 
\begin{equation*}
V_\e:= \{ x \in \G_\e\,:\, \eta_x^\e =1 \}\,.
\end{equation*}
In the RC model    we take  
\[
  E_\e :=\bigl\{  \{x,y \} \,:\, x \not =y \text{ in } V_\e\,,\; \rosso{x \prec_{\rm lex} y} \,,\;\s^\e_{x,y} \leq g(|x-y|)\bigr \}
  \]
 and in the MA model we take 
  \[
 E_\e   := \bigl \{\{x,y\}  \,:\,  x\not= y\text{ in } V_{\e}\,,\;  |x-y| + |\s^{\e}_x|+|\s^\e_y| +|\s^\e_x-\s^\e_y| \leq \z \bigr\}\,.
\]
Given an event $A$ concerning the graph $G_\e$, we define $\text{Piv}(A)$ as the family of  sites of $\G_\e$  which are pivotal for the event $A$. More precisely, given a configuration $(\eta^\e, \s^\e)$ in $\O_\e$   and a site $x \in \G_\e$, we say that $x $  is pivotal for $A$  if 
\[ \mathds{1}_A (\eta^\e, \s^\e)\not = \mathds{1}_A (\eta^{\e,x} , \s^\e)\,,
\]
 where
$\eta^{\e,x}$ is obtained from $\eta^\e$ by replacing $\eta^\e_x$ with $1- \eta ^\e_x$. We point out  that the event $\{x\in {\rm Piv}(A)\}$ and the random variable $\eta^\e_x$ (under $\bbP^{(\e)}_\l$)  are independent.

In what follows, we write $\bbE_\l ^{(\e)}$ for the expectation associated to $\bbP_\l^{(\e)}$ and (recall Definition \ref{urla})
 we set  
 \begin{equation*}
 \tilde  \theta ^{(\e)}_k  ( \l):= \bbP^{(\e)}_\l \bigl ( 0 \lr  S_k\,|\, \eta_0^\e=1  \bigr)\,, \qquad  \theta ^{(\e)}_k  ( \l):= \bbP^{(\e)}_\l \bigl ( 0 \lr  S_k \bigr)\,.
\end{equation*}
\begin{Warning}
Above, and in what follows, we convey  that, when considering $\bbP^{(\e)}_{\l}$ or the associated expectation $\bbE^{(\e)}_{\l}$,  graphical statements as `` $0\lr S_k$''   refer  to the random graph $G_\e$, if not stated otherwise.
\end{Warning}
The following result allows to approximate  \rosso{the functions in} \eqref{solare} and \rosso{their derivatives} by their  discretized versions:
\begin{Proposition}\label{prop_discreto}
For any $n\geq 1$ and for all $k=0,1,\dots, n$ it holds
\begin{align}
& \tilde \theta_k (\l) = \lim _{\e \downarrow 0 }\tilde\theta _k ^{(\e)} (\l) \,, \label{uri1}\\
& \tilde \psi _n'(\l) = \lim _{\e \downarrow 0} \bbE^{(\e)}_\l \bigl[ \bigl|  {\rm Piv}(0 \lr  S_n ) \bigr| \bigr]\,.\label{uri2}
\end{align}
In particular, it holds $ \tilde \psi_k (\l) = \l \lim _{\e \downarrow 0 }\tilde\theta _k ^{(\e)} (\l) $.
\end{Proposition}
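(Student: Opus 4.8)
The plan is to prove Proposition \ref{prop_discreto} by a coupling argument that realizes the discretized model $G_\e$ as a faithful approximation of the continuum graph $\cG$ built on the Poisson point process. The key observation is that $\bbP^{(\e)}_\l$ is designed precisely so that the occupied sites $V_\e$ (under the conditioning $\eta_0^\e=1$, a copy at the origin) converge in distribution, as $\e=1/m\downarrow 0$, to a $\l$-homogeneous Poisson point process (conditioned to contain $0$): indeed, each cell $R_x^\e$ of volume $\e^d$ is occupied independently with probability $p_\l(\e)=\l\e^d/(1+\l\e^d)=\l\e^d+o(\e^d)$, which is the standard grid approximation of a PPP of intensity $\l$. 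First I would set up, for each $\e$, an explicit coupling on a common probability space between $(\eta^\e,\s^\e)$ and $(\xi,\s)$: place the single point of $\xi$ inside an occupied cell $R_x^\e$ uniformly at random, transfer the marks $\s^\e$ to $\s$ cell-by-cell via \eqref{Babbo} (using that for $P_{0,\l}$-a.e.\ $\xi$ the cells are eventually singleton-separating, so $\s_{x,y}$ is well defined and stable for small $\e$), and check that under this coupling the discrete graph $G_\e$ converges edgewise to $\cG$ on the box $\L_{n+1}$, except on an event of probability $o(1)$ (e.g.\ two Poisson points in one cell, or a point landing within $\e\sqrt d$ of $S_{n+1}$, or a pair at distance exactly on the discontinuity set of $g$ — here is where the hypothesis that $g$, resp.\ $h$, is \emph{good} enters, since the finitely many discontinuity radii $r_1,\dots,r_m$ and the uniform continuity on each interval guarantee that $g(|x-y|)$ is approximated by $g(|x^\e-y^\e|)$ up to a uniformly small error, and the exceptional set of pairs lying near a discontinuity has vanishing probability).

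For \eqref{uri1} I would then argue that the event $\{0\lr S_k\}$ is, up to the $o(1)$ exceptional events, a continuity event for this coupling: if $0\lr S_k$ in $\cG$ via a finite path contained in some bounded region, then for $\e$ small enough the corresponding discrete path exists in $G_\e$, and conversely a discrete connection to $S_k$ forces a continuum connection to $S_k$ (using that all edges have length $<1$, so a discrete path reaching $\|\cdot\|_\infty\ge k$ produces a continuum path reaching $\|\cdot\|_\infty\ge k$ up to the $o(1)$ correction near the boundary). Monotone/dominated convergence then yields $\tilde\theta_k^{(\e)}(\l)\to\tilde\theta_k(\l)$. The final assertion $\tilde\psi_k(\l)=\l\lim_\e\tilde\theta_k^{(\e)}(\l)$ is then immediate from the definition $\tilde\psi_k(\l)=\l\tilde\theta_k(\l)$ in \eqref{solare}.

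For \eqref{uri2} I would combine \eqref{rospo}, which expresses $\tilde\psi_n'(\l)=\bbE_{0,\l}[|\mathrm{Piv}_+(0\lr S_n)|]$, with the discrete analogue. The natural route is to show $\bbE^{(\e)}_\l[|\mathrm{Piv}(0\lr S_n)|]\to\bbE_{0,\l}[|\mathrm{Piv}_+(0\lr S_n)|]$ directly via the coupling: a site $x\in\G_\e$ is pivotal for $\{0\lr S_n\}$ in $G_\e$ iff toggling $\eta_x^\e$ changes the connection, and under the coupling occupied pivotal cells correspond (again up to $o(1)$ events) to $(+)$-pivotal Poisson points, since adding/removing the unique point in a cell is the discrete image of adding/removing a Poisson point. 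One must be careful that $|\mathrm{Piv}|$ is an unbounded random variable, so pointwise convergence under the coupling is not enough; I would obtain a uniform-integrability bound, e.g.\ by dominating $|\mathrm{Piv}(0\lr S_n)|$ by the number of occupied cells within $\L_{n+1}$ that are connected to $0$, and bounding second moments uniformly in $\e$ using that connection is mediated by a subcritical-type cluster when needed, or — more robustly — by noting $\bbE^{(\e)}_\l[|\mathrm{Piv}|]\le \bbE^{(\e)}_\l[\#\{x\in V_\e: x\lr 0\}]$ which converges by \eqref{uri1}-type arguments applied to expected cluster size; alternatively one can differentiate the already-established convergence $\tilde\psi_n(\l)=\l\lim_\e\tilde\theta_n^{(\e)}(\l)$ and justify exchanging $\lim_\e$ with $\frac{d}{d\l}$ using monotonicity of $\l\mapsto\tilde\theta_n^{(\e)}(\l)$ (so the derivatives are handled by a Dini/Lipschitz-type convergence theorem for monotone families), then identify the limit of the derivatives with the right-hand side of \eqref{uri2} via the discrete Russo-type formula for $\bbP^{(\e)}_\l$.

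The main obstacle I expect is the control of the exceptional events and, above all, the uniform integrability needed for \eqref{uri2}: pointwise convergence of $|\mathrm{Piv}|$ along the coupling is routine, but interchanging limit and expectation for this unbounded, $\e$-dependent quantity requires a genuine uniform bound, and getting it cleanly is where the hypothesis on $g$/$h$ being good and the uniformly-bounded edge length ($<1$) must be leveraged — for instance to ensure that pivotal cells are confined to $\L_{n+1}$ and that the expected number of cells connected to the origin is bounded uniformly in $\e$ (this last estimate is itself a discrete version of the statement that the expected cluster size is locally finite, which for the box-truncated event $0\lr S_n$ reduces to counting points in $\L_{n+1}$ but with the right $\l$-dependence). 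The discontinuity set of $g$ (the radii $r_i$) is the other delicate point: one must show $\bbP_\l(\exists\, x,y\in\xi\cap\L_{n+1}: |x-y|\in\bigcup_i(r_i-\e\sqrt d, r_i+\e\sqrt d))\to 0$, which follows from the absolute continuity of the law of inter-point distances for a PPP together with the finiteness of $\{r_1,\dots,r_m\}$.
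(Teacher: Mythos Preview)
Your overall plan is in the right spirit, and you correctly single out the two genuinely delicate points: the role of the ``good'' hypothesis on $g$ (resp.\ $h$) in controlling pairs whose distance falls near a discontinuity radius, and the passage from pointwise to $L^1$ convergence needed for \eqref{uri2}. On \eqref{uri1} your outline would go through with enough bookkeeping.

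The paper organizes the approximation somewhat differently. Rather than coupling by dropping Poisson points into occupied cells, it starts from the PPP, conditions on the events $A_\e=\{\xi(R_x^\e)\le 1\ \forall x\in\G_\e\}$ and $B_\e=\{\xi(R_0^\e)=1\}$, and builds a chain of intermediate grid graphs $\cG_\e^*$, $\cG_\e^{\#}$, $\cG_\e$ (Section~\ref{sec_discreto1}). Here $\cG_\e^*$ is obtained by sliding each Poisson point $\bar x$ to its cell corner $x$ and keeping the original edge relation, $\cG_\e^{\#}$ keeps the slid positions but resamples the $\s$-field on the grid, and $\cG_\e$ uses both grid positions and grid marks. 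The comparison $\cG_\e^*\leftrightarrow\cG_\e^{\#}$ is free (same law), and the comparison $\cG_\e^{\#}\leftrightarrow\cG_\e$ is the content of Lemma~\ref{uffa3}, which is exactly where the uniform-continuity pieces of the good function and the $O(\e^{-d+1})$ count of pairs near each $r_i$ enter --- matching what you sketched. The passage from $\bbP_{0,\l}$ to $\bbP_\l(\,\cdot\mid B_\e)$ is done via Palm calculus (Campbell's formula), not via an explicit coupling.

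Where your proposal has a real gap is the handling of the unbounded variable $|\mathrm{Piv}(0\lr S_n)|$ in \eqref{uri2}. Neither of your two suggestions works as stated. The domination $|\mathrm{Piv}(0\lr S_n)|\le\#\{x\in V_\e:\,x\lr 0\}$ is false: a pivotal site can have $\eta_x^\e=0$ (adding it creates the connection), so it need not lie in $V_\e$ at all; and even restricting to $\mathrm{Piv}_+$, you would be bounding by the cluster size of $0$ in $\L_{n+1}$, whose uniform-in-$\e$ integrability is itself a nontrivial statement you have not established. The alternative route --- proving $\tilde\psi_n^{(\e)}\to\tilde\psi_n$ and then interchanging $\lim_\e$ and $\tfrac{d}{d\l}$ --- also fails without further input: pointwise convergence of monotone functions does not imply convergence of derivatives (Dini's theorem gives uniform convergence of the functions, not of the derivatives).

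The paper's resolution is worth noting because it is quite clean. First, on the continuum side one has the deterministic bound $|\mathrm{Piv}_+(0\lr S_n)|\le \xi(\L_{n+2})$ (all edges have length $<1$, cf.\ Remark~\ref{vetri}). The key technical Lemma~\ref{uffa3} is therefore proved in the weighted form
\[
\bbE_\l\bigl[\xi(\L_{n+2})\,\mathds{1}(\cG_\e^{\#}\neq\cG_\e)\mid A_\e\cap B_\e\bigr]=o(1),
\]
so that the difference between the $\mathrm{Piv}_+$-counts for $\cG_\e^{\#}$ and $\cG_\e$ is absorbed by this weight rather than by any uniform-integrability argument for $|\mathrm{Piv}|$ itself. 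Second, to pass from the conditioned $\mathrm{Piv}_+$ to the unconditioned $\mathrm{Piv}$ in the discrete model, the paper uses the exact identity (Lemma~\ref{x9})
\[
\bbE^{(\e)}_\l\bigl[\,|\mathrm{Piv}_+(0\lr S_n)|\,\big|\,\eta_0^\e=1\bigr]=\bbE^{(\e)}_\l\bigl[\,|\mathrm{Piv}(0\lr S_n)|\,\bigr],
\]
which follows from the independence of $\{x\in\mathrm{Piv}(0\lr S_n)\}$ and $\eta_x^\e$; this removes the conditioning at no cost. These two devices --- the weighted comparison lemma and the $\mathrm{Piv}_+/\mathrm{Piv}$ identity --- are what replace the uniform-integrability step you flagged, and they are the ingredients your outline is missing.
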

The last statement in Proposition \ref{prop_discreto} is an immediate consequence of \eqref{uri1}. The proof of \eqref{uri1}  is given in Section \ref{sec_discreto1}, while the proof of \eqref{uri2} is given  in Section \ref{sec_discreto2}.
\subsection{A crucial  inequality on   $\theta^{(\e)}_n(\l)$}
As explained in \cite{DRT2},   due to the phase transition \eqref{PT}, to prove Theorem \ref{teo1} it is enough to show that given $\d\in (0,1)$ there exists a positive constant $c_0= c_0 (\d)  $ such that for each $n\geq 1$
\begin{equation}\label{ermal1}
\tilde \psi_n (\l ) \leq c_0 \frac{ \sum _{k=0}^{n-1} \tilde \psi_k(\l) }{n} \tilde \psi_n '(\l)\,,  \qquad \forall \l\in [\d, \d^{-1}]\,.
\end{equation}
Indeed,   since the functions $\l \mapsto \rosso{\tilde \psi_k} (\l) $ are increasing in $\l$ and converging as $k\to \infty$, due to  \cite[Lemma 3]{DRT2} \rosso{applied to the functions $f_n(\l) := c_0\tilde \psi_n(\l)$},   \eqref{ermal1} implies that there exists $\l_* \in [\d, \d^{-1}]$ fulfilling  the following property  for any $\l \in  [\d, \d^{-1}]$: 
\begin{equation}\label{kylo_ren} 
\begin{cases}
 \l \tilde\theta_n(\l) \leq \rosso{M} e^{-c\,   n} &\text{if }\l<\l_*\text{ and }n \in \bbN\,,
\\
\l \tilde\theta(\l)\geq C(\l-\l_*)&\text{if }\l>\l_*\,,
\end{cases}
\end{equation}
where \rosso{$M=M(\d)>0$}, $C=C(\d )>0$, $c=c(\l, \d )>0$  and $\tilde\theta(\l)=\lim_{n\rgh\infty}\tilde\theta_n(\l)=\bbP_{0,\l}(0\lr \infty)$. By taking $\d$ small to have $\l_c \in [\d, \d^{-1}]$, as a byproduct of  \eqref{PT} and  \eqref{kylo_ren}  we get the identity $\l_*= \l_c$ and the thesis of Theorem \ref{teo1}. 

\bigskip

Due to Proposition \ref{prop_discreto}, we have \eqref{ermal1} if we prove that,  given $\d \in (0,1)$, there exists a positive constant $c= c (\d)  $ such that 
\begin{equation}\label{ermal2}
\tilde \theta^{(\e)}_n (\l ) \leq o(1) + c \frac{ \sum _{k=0}^{n-1} \tilde \theta^{(\e)}_k(\l) }{n}  \bbE^{(\e)}_\l \bigl[ \bigl|  {\rm Piv}(0 \lr  S_n ) \bigr| \bigr]
\end{equation}
for any $\l \in [\d, \d^{-1}]$ and $n\geq 1$, 
where the term $o(1)$ goes to zero  uniformly in 
$ \l\in [\d, \d^{-1}]$ as $\e\downarrow 0$. Since the event $\{0 \lr  S_k\} $ implies that $ \eta_0^\e=1$ and since $p_\l(\e) = O( \e^d)$  uniformly in 
$ \l\in [\d, \d^{-1}]$,  \eqref{ermal2} 
is proved whenever we show the  following proposition containing the crucial inequality on $  \theta^{(\e)}_n(\l)$:
\begin{Proposition}\label{prop_stimona}
Given $\d \in (0,1)$, there exists a positive constant $c= c (\d)  $ such that 
\begin{equation}\label{ermal3}
  \theta^{(\e)}_n (\l ) \leq o( \e^d) + c \frac{ \sum _{k=0}^{n-1}  \theta^{(\e)}_k(\l) }{n}  \bbE^{(\e)}_\l \bigl[ \bigl|  {\rm Piv}(0 \lr  S_n ) \bigr| \bigr]
\end{equation}
for any $\l \in [\d, \d^{-1}]$ and $n\geq 1$, where $o(\e^d)/\e^d$ goes to zero uniformly in  $\l \in [\d, \d^{-1}]$  as $\e\downarrow 0$.
\end{Proposition}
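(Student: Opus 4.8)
\textbf{Proof strategy for Proposition \ref{prop_stimona}.} The plan is to run the OSSS-type inequality of Duminil-Copin, Raoufi and Tassion on the Boolean-type function $\mathds{1}_{\{0\lr S_n\}}$ on the finite product space $\O_\e$, using a well-chosen randomized algorithm (decision tree) that explores the cluster of an arbitrary starting sphere. First I would fix $\e=1/m$ and $n$, and set up the OSSS inequality in the form
\begin{equation*}
\var\bigl(\mathds{1}_{\{0\lr S_n\}}\bigr)\leq \sum_{z}\delta_z(\mathsf{A})\,\mathrm{Inf}_z\bigl(\mathds{1}_{\{0\lr S_n\}}\bigr),
\end{equation*}
where the sum runs over the coordinates of $\O_\e$ (the vertices $x\in\G_\e$ carrying the Bernoulli variables $\eta^\e_x$, plus, for the RC model, the pairs $w\in W_\e$ carrying $\s^\e_w$), $\mathsf{A}$ is the algorithm, $\delta_z(\mathsf{A})$ is its revealment at $z$, and $\mathrm{Inf}_z$ is the (conditional) influence. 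Since the $\s^\e_w$-coordinates are never pivotal in the sense of $\mathrm{Piv}$ (only the occupation variables $\eta^\e_x$ can toggle the event), the influence of an $\eta^\e_x$-coordinate is, up to the factor $p_\l(\e)(1-p_\l(\e))\asymp\e^d$, exactly $\bbP^{(\e)}_\l(x\in\mathrm{Piv}(0\lr S_n))$, while the $\s$-coordinates contribute influences controlled by the same pivotality event for a neighbouring vertex; summing gives $\sum_z \delta_z(\mathsf{A})\mathrm{Inf}_z \lesssim \e^d \max_z\delta_z(\mathsf{A})\cdot\bbE^{(\e)}_\l[|\mathrm{Piv}(0\lr S_n)|]$ after re-centring the sum to involve all of $\G_\e$.

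The heart of the argument is the design of the algorithm and the bound on its revealment. Following \cite{DRT2}, I would average over a starting radius: pick $k\in\{0,1,\dots,n-1\}$ uniformly at random, and let the algorithm reveal, in breadth-first fashion, the connected component of $S_k$ in $G_\e$ — i.e. query a vertex/pair only once it is discovered to be reachable from $S_k$ through already-revealed occupied vertices — and stop as soon as it has determined whether this component touches both $S_0$-side and $S_n$ (more precisely, determined the event $\{0\lr S_n\}$). The key revealment estimate is that a fixed coordinate $z$ sitting ``near radius $j$'' (say $z$ lives in the shell at sup-distance $j$ from the origin) is revealed only if $S_k$ is connected to $z$, which forces $\|j-k\|$-type connection events; averaging over $k\in\{0,\dots,n-1\}$ and using translation/monotonicity bounds on $\tilde\theta^{(\e)}_{|j-k|}$ shows
\begin{equation*}
\delta_z(\mathsf{A})\leq \frac{C}{n}\sum_{k=0}^{n-1}\tilde\theta^{(\e)}_{k}(\l),
\end{equation*}
uniformly in $z$, because from each candidate radius the coordinate is reached only with probability at most a connection probability at the corresponding scale. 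Plugging this and the influence identity into OSSS, and bounding $\var(\mathds{1}_{\{0\lr S_n\}})$ from below by $\theta^{(\e)}_n(\l)(1-\theta^{(\e)}_n(\l))$ — here one uses that in the relevant window $\theta^{(\e)}_n(\l)$ is bounded away from $1$ (it is $O(\e^d)$ in the subcritical regime and in general one can absorb the near-$1$ case trivially), which is exactly where the additive $o(\e^d)$ slack and the uniformity over $\l\in[\d,\d^{-1}]$ enter — yields \eqref{ermal3}.

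The main obstacle I anticipate is twofold. First, making the revealment bound genuinely uniform in $\e$ and in $\l\in[\d,\d^{-1}]$: one must handle boundary shells of $\G_\e$ (coordinates within distance $1$ of $S_n$ or of the origin, where the ``reached from $S_k$'' reasoning degenerates) and show their total contribution is $o(\e^d)$ times the right-hand side — this is where the goodness hypothesis on $g$ (resp. on $h$), giving uniform continuity away from finitely many radii, is used to control the discretization error in the connection functions. Second, the bookkeeping that converts influences of the continuously-distributed $\s^\e_w$ coordinates into the discrete pivotal count: since a pair $w=\{x,y\}$ with $g(|x-y|)>0$ (resp. satisfying the MA constraint) has influence comparable to the probability that toggling its edge status flips $\{0\lr S_n\}$, and this in turn is dominated by the event that $x$ (or $y$) is pivotal, one gets the sum over $W_\e$ controlled by a bounded-degree multiple of $\bbE^{(\e)}_\l[|\mathrm{Piv}(0\lr S_n)|]$ — the bounded degree coming precisely from the uniformly bounded edge length (all edges have length $<1$). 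Once these two technical points are in place, the inequality \eqref{ermal3} follows by collecting terms, and by Proposition \ref{prop_discreto} this gives \eqref{ermal2}, hence \eqref{ermal1} and Theorem \ref{teo1}.
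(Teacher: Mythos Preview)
Your skeleton --- OSSS on $\mathds{1}_{\{0\lr S_n\}}$, an exploration algorithm $T_k$ launched from $S_k$, averaging over $k$, and converting $\eta$-influences into pivotal probabilities via $\influ\le c\,\e^d\,\bbP^{(\e)}_\l(i\in\mathrm{Piv})$ --- matches the paper's route exactly. But two of your stated ingredients are misplaced, and one would fail as written. First, the $o(\e^d)$ term does \emph{not} come from the variance lower bound: since $\theta^{(\e)}_n(\l)\le\bbP^{(\e)}_\l(\eta^\e_0=1)=p_\l(\e)=O(\e^d)$, the factor $(1-\theta^{(\e)}_n(\l))^{-1}$ is simply a constant $c(\d)$ for all small $\e$, with no slack needed. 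The $o(\e^d)$ comes entirely from the $\s$-coordinates (the paper's Lemma~\ref{crepes}): one bounds $\sum_{i\in W_\e}\d^\e_i(T_k)\,\influ(0\lr S_n)$ crudely, using that revealing $\s^\e_{x,y}$ forces $\eta^\e_x=\eta^\e_y=1$ (so $\d^\e_i=O(\e^{2d})$) and that the influence forces in addition $\eta^\e_0=1$ (so $\influ=O(\e^{3d})$ for generic $i$), together with $|W_\e|=O(\e^{-2d})$. Your alternative plan --- dominate each $\s$-influence by a vertex-pivotality event and sum via ``bounded degree'' --- does not work: in the RC model each vertex of $\G_\e$ is incident to $O(\e^{-d})$ pairs in $W_\e$, so the degree is not bounded; and toggling a single edge $\{x,y\}$ need not make $x$ pivotal, since removing $x$ deletes all its edges at once.

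Second, the goodness hypothesis on $g$ (resp.\ $h$) plays \emph{no role} in Proposition~\ref{prop_stimona}; the argument is purely about the discrete graph $G_\e$ and uses only that edges have length $<1$. Goodness is used solely in Proposition~\ref{prop_discreto} (specifically Lemma~\ref{uffa3}), to control the discretisation error when approximating $\cG$ by $\cG_\e$. Finally, a bookkeeping correction: the revealment bound the paper actually proves (Lemma~\ref{firenze}) is $\frac{1}{n}\sum_{k}\d^\e_i(T_k)\le c\,\e^{-d}\,\frac{1}{n}\sum_{a=0}^{n-1}\theta^{(\e)}_a(\l)$, with an $\e^{-d}$ you omitted; this is unavoidable because revealing $\eta^\e_i$ (when $i\notin H^k_\e$) only requires some \emph{neighbour} $j$ of $i$ within distance $1$ to satisfy $j\lr S_k$, and there are $O(\e^{-d})$ such neighbours. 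This $\e^{-d}$ then cancels the $\e^d$ from the influence-to-pivotal conversion.
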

\subsection{Proof of Proposition  \ref{prop_stimona} by the OSSS inequality}\label{pizzarossa}
It is possible to derive \eqref{ermal3}   by applying the OSSS inequality  for product probability spaces (cf. \cite[Theorem 3.1]{OSSS}, \rosso{\cite[Remark 5]{DRT2}}).  To recall it and fix the notation in our context, we first introduce the index set $I_\e$ as the disjoint union
\[ I_\e:= \G_\e\sqcup W_\e\,.
\]
Since in the MA model $W_\e = \G_\e$, 
given $x\in \G_\e$ we write $\dot{x}$ for the site $x$ thought as element of $W_\e$ inside $I_\e$. More precisely, for the MA model  it is convenient to slightly change our notation and set $W_\e:= \{\dot{x}\,:\, x\in \G_\e\}$, thus making $W_\e$ and $\G_\e$ disjoint. We will keep the notation $\s_x^\e$, instead of $\s_{\dot{x}}^\e$, since no confusion arises.  To have a uniform notation for random variables, given $i \in I_\e$ we set 
 \[\g^\e _i:=\begin{cases}
\eta_i^{\e} &\text{if }i\in\G_\e,
\\\s_i^\e &\text{if }i\in  W_\e.
\end{cases}\]
By construction, $\g^\e=(\g^\e _i: i \in I_\e)$ is a family of  independent random variables with law $\bbP^{(\e)}_\l$.

We consider an algorithm $T$ to establish if the event $\{0\lr  S_n\}$  takes place in $G_\e$, having  input  the values $\g^\e _i$'s. At the beginning the algorithm does not reveal  (read)   all the values $\g^\e _i$'s, but it reveals some of them during the execution.   The OSSS inequality  (cf. \cite[Theorem 3.1]{OSSS}, \rosso{\cite[Remark 5]{DRT2}}) then reads 
\begin{equation}
\label{OSSS}
\text{Var}_\e (\mathds{1}_{\{0\lr  S_n\}})\leq \sum_{i\in I_\e}\d^\e _i(T) \influ ( 0\lr  S_n ),
\end{equation}
where the above variance refers to $\bbP^{(\e)}_\l$,  $\d^\e_i(T)$ and \rosso{$\influ  (0\lr  S_n)$} are respectively the \textit{revealment} and the \textit{influence} of $\g^\e_i$. More precisely, one sets 
\begin{align*}
& \d^\e _i(T):=\bbP_\l^{(\e)} (T\text{ reveals the value of }\g^\e_i)\,,\\
& \influ (\rosso{0\lr  S_n}):=\bbP_\l^{(\e)} \bigl(\mathds{1}_{\{0\lr  S_n\}}(\g ^\e )\neq\mathds{1}_{\{0\lr  S_n\}}(\g^{\e,i} )\bigr)\,,
\end{align*}
where   $ \g^{\e,i}= ( \g^{\e,i}_j\,:\, j \in I_\e) $ appearing in the second   identity is characterized by the following requirements: 
 (a) $\g^{\e,i} _j:=\g^{\e} _j$ for all $j \not =i$,  (b)  $\g^{\e,i} _i$ 
has the same distribution of $\g^\e_i$,  (c) $\g^{\e,i} _i$   is independent \rosso{of} the family $\g^\e$ (with some abuse, we have kept
the notation $\bbP_\l^{(\e)}$ for the joint law).

Since  $\text{Var}_\e(\mathds{1}_{\{0\lr  S_n\}})=\theta_n^{(\e)}(\l)(1-\theta_n^{(\e)}(\l))$, 
  \eqref{OSSS} implies for any $\e_0>0$   that 
\begin{equation}
\label{ermal4}
\theta_n^{(\e)}(\l)\leq 
c \sum_{i\in I_\e}\d^\e _i(T) \influ ( 0\lr  S_n ) \qquad \forall \e < \e_0\,,
\end{equation}
where $c:=\sup_{\l\in[\d,\d^{-1}]}\sup _{\e \leq \e_0 } (1-\theta_1^{(\e)}(\l))^{-1}$ (note that 
\rosso{$\theta^{(\e)}_n(\l) \leq \theta^{(\e)}_1(\l)$} for $n\geq 1$). As  $\rosso{\theta_1^{(\e)}(\l)} \leq \bbP^{(\e)}_\l (\eta_0^\e=1) \approx \l \e^d  $, by taking  a suitable $\e_0=\e_0(\d)$, we get that $c$ is  strictly positive and that $c$ depends only on $\d$.

Similarly to \cite{DRT2}, in order to derive \eqref{ermal3} from \eqref{ermal4}, for each $k=1, \dots, n$  we construct an algorithm $T_k$ to determine if  the event $\{0\lr  S_n\}$ occurs such that the following \rosso{Lemmas  \ref{crepes} and \ref{firenze}} are valid:
\begin{Lemma}\label{crepes}
For any $k\in \{1,2,\dots,n\}$ 
given $\d\in (0,1)$  it holds    
\begin{equation}\label{rinoceronte}
\sum_{i\in W_{\e}} \d^{\e}_i(T_k)\influ(0\lr  S_n)=  o(\e^d)\,,
\end{equation}
where $o(\e^d)/\e^d$ goes to zero uniformly in  $\l \in [\d, \d^{-1}]$  as $\e\downarrow 0$.
\end{Lemma}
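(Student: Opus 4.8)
The idea is to design the algorithms $T_k$ so that the values $\s^\e_w$ of the "edge-label'' coordinates $w \in W_\e$ are revealed only rarely — specifically, only for edges that are incident to a vertex that has already been explored by the algorithm. I would define $T_k$ as a breadth-first exploration of the cluster of $S_k$ (the sphere of radius $k$): start by querying $\eta^\e_x$ for every $x \in \G_\e$ with $\|x\|_\infty = k$ (i.e. $x$ lies on the discrete sphere $S_k$), then maintain a growing set of "active'' occupied vertices whose incident edges have not yet all been tested; to process an active vertex $x$, query $\eta^\e_y$ for the finitely many $y \in \G_\e$ with $|x-y| < 1$ that have not yet been queried, and for each such occupied $y$ query $\s^\e_{x,y}$ (RC model) or $\s^\e_x, \s^\e_y$ (MA model) to decide whether $\{x,y\} \in E_\e$. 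The algorithm stops when the connected component of $S_k$ in $G_\e$ has been entirely revealed, at which point it can decide $\{0 \lr S_n\}$ — indeed the cluster of $S_k$ either reaches $S_n$ or it does not, and since $\{0 \lr S_n\}$ is a crossing event, one actually runs this exploration from the component of $S_k$ (this is the same bookkeeping as in \cite{DRT2}); I will suppress the routine details of why the revealed information suffices to determine the event.

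The point of this construction is the \textbf{revealment bound for edge-coordinates}. A coordinate $\s^\e_w$ with $w = \{x,y\}$ is revealed by $T_k$ only if at least one of $x,y$ has been queried and found occupied, and for $x$ to be queried it must lie within $\ell^\infty$-distance less than $1$ of some previously explored occupied vertex — so in particular $x$ must itself be occupied, OR $x$ lies on $S_k$. The crucial observation is that a vertex $x$ is queried only if $x$ is occupied or $x$ is one of the finitely many (order $\e^{-(d-1)}$) grid points on $S_k$; and in the first case, being occupied has probability $p_\l(\e) = O(\e^d)$. Hence for every $w = \{x,y\} \in W_\e$,
\begin{equation*}
\d^\e_w(T_k) \leq \bbP^{(\e)}_\l(\eta^\e_x = 1) + \bbP^{(\e)}_\l(\eta^\e_y=1) + \mathds{1}[x \in S_k \text{ or } y \in S_k] = O(\e^d) + \mathds{1}[\cdots].
\end{equation*}
Here one must be slightly careful: $\s^\e_w$ is queried only \emph{after} establishing that one endpoint is occupied, so independence of $\{x \text{ queried and occupied}\}$ from $\s^\e_w$ itself gives the clean bound $\d^\e_w(T_k) = O(\e^d)$ for all $w$ whose endpoints avoid $S_k$, and $\d^\e_w(T_k) \le 1$ for the $O(\e^{-(d-1)})$-many $w$ with an endpoint on $S_k$ — but the latter contribute, after multiplying by the number of such $w$ (which is $O(\e^{-(d-1)}) \cdot O(\e^{-d}) = O(\e^{-(2d-1)})$ ... ) too much, so in fact I must instead argue that $S_k$-vertices are queried but their \emph{incident edge-labels} are still only read when the \emph{other} endpoint turns out occupied, restoring $\d^\e_w(T_k) = O(\e^d)$ uniformly over $w \in W_\e$. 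Then $|W_\e| = O(\e^{-2d})$ in the RC case (resp. $O(\e^{-d})$ in MA), and since $\influ(0 \lr S_n) \le 1$,
\begin{equation*}
\sum_{w \in W_\e} \d^\e_w(T_k)\, \influ(0 \lr S_n) \le |W_\e| \cdot O(\e^d) \cdot \text{(something)} .
\end{equation*}
This naive count is not $o(\e^d)$, so the real work is to pair the revealment bound with a sharper bound on the influence $\influ(0 \lr S_n)$ for edge-coordinates $w$: flipping $\s^\e_w$ changes the event only if $w$'s endpoints are occupied \emph{and} $w$ is pivotal, so $\influ(0 \lr S_n) = O(\e^{2d})$ (two endpoints occupied) for $w \in W_\e$; combined with $\d^\e_w(T_k) = O(\e^d)$ and $|W_\e| = O(\e^{-2d})$ this gives $\sum_{w} \d^\e_w \influ = O(\e^{-2d}) \cdot O(\e^d) \cdot O(\e^{2d}) = O(\e^{d})$, and a small additional gain (e.g. from the probability that the specific endpoints are connected to $S_k$, or simply from the fact that one of the two $O(\e^d)$ factors can be upgraded) upgrades $O(\e^d)$ to $o(\e^d)$ uniformly in $\l \in [\d, \d^{-1}]$.

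\textbf{Main obstacle.} The delicate point is exactly this matching of powers of $\e$: showing that the product (number of edge-coordinates) $\times$ (revealment) $\times$ (influence) is not merely $O(\e^d)$ but genuinely $o(\e^d)$, uniformly over the compact range of $\l$. This requires being careful that the factor $\influ(0\lr S_n)$ for $w=\{x,y\}$ carries \emph{both} occupancy indicators $\eta^\e_x=1,\eta^\e_y=1$ — giving $\e^{2d}$ rather than $\e^d$ — and that the revealment bound $\d^\e_w(T_k)=O(\e^d)$ holds with the constant uniform in $\l$ and in the position of $w$ (including $w$ touching $S_k$), which forces the exploration to be organized so that edge-labels are read strictly after the neighbour's occupancy is confirmed. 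Once these two estimates are in place the sum telescopes to $o(\e^d)$ by a direct count, proving \eqref{rinoceronte}.
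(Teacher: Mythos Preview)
Your overall strategy is the same as the paper's, but your power-counting has a genuine gap that you yourself flag as the ``Main obstacle'' and then do not actually resolve. There are two concrete points you are missing, and either one alone closes the argument.

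\textbf{(1) Revealment is $O(\e^{2d})$, not $O(\e^d)$.} In the RC model, your own description of the algorithm says that for an active occupied vertex $x$ you first query $\eta^\e_y$, and only \emph{for each such occupied $y$} do you query $\s^\e_{x,y}$. So $\s^\e_{x,y}$ is revealed only after \emph{both} $\eta^\e_x=\eta^\e_y=1$ have been established. Since these occupancies are independent of $\s^\e_{x,y}$, you get $\d^\e_w(T_k)\le \bbP^{(\e)}_\l(\eta^\e_x=\eta^\e_y=1)=p_\l(\e)^2=O(\e^{2d})$ uniformly in $w$, not merely $O(\e^d)$. This is exactly how the paper's algorithm is organized (it reveals $\s^\e_{x,y}$ only after checking $\eta^\e_x\eta^\e_y=1$), and the paper uses precisely this $O(\e^{2d})$ bound.

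\textbf{(2) Influence carries an extra factor from $\eta^\e_0$.} For $w=\{x,y\}$, if resampling $\s^\e_{x,y}$ changes $\mathds{1}_{\{0\lr S_n\}}$ then necessarily $\eta^\e_0=\eta^\e_x=\eta^\e_y=1$ (if $\eta^\e_0=0$ the event fails on both sides). Hence $\influ(0\lr S_n)\le p_\l(\e)^3=O(\e^{3d})$ when $0\notin\{x,y\}$, and $O(\e^{2d})$ for the $O(\e^{-d})$ edges through the origin. You stopped at $O(\e^{2d})$, forgetting the origin.

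With either correction the sum is already $O(\e^{2d})=o(\e^d)$; the paper uses both and gets $O(\e^{3d})$. Your vague ``small additional gain'' via connection probabilities to $S_k$ is unnecessary and would be harder to make uniform.

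Finally, you treat only the RC case. In the MA model $W_\e$ indexes vertex-marks $\s^\e_x$, $|W_\e|=O(\e^{-d})$, the algorithm reveals $\s^\e_x$ only after $\eta^\e_x=1$ (so $\d^\e_{\dot x}(T_k)=O(\e^d)$), and the influence requires $\eta^\e_0=\eta^\e_x=1$ (so $O(\e^{2d})$ for $x\ne 0$). The sum is then $O(\e^{-d})\cdot O(\e^d)\cdot O(\e^{2d})+O(\e^d)\cdot O(\e^d)=O(\e^{2d})=o(\e^d)$.
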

\begin{Lemma} \label{firenze}
Given $\d\in (0,1)$ there exists $c=c(\d)>0$ such that, for any  $\l \in [\d,\d^{-1}]$ and any $n\geq 1$, it holds 
\begin{equation}\label{rey1}
\frac{1}{n}\sum_{k=1}^n 
\d^{\e }_i(T_k)\leq c\,\e^{-d}\frac{1}{n} \sum_{\rosso{a}=0}^{\rosso{n}-1}\theta_{\rosso{a}}^{(\e)}(\l)  \qquad \forall i \in \G_\e\,.
\end{equation}
 \end{Lemma}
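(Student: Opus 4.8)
The plan is to define, for each $k\in\{1,\dots,n\}$, an algorithm $T_k$ that explores the cluster of $S_k$ (the boundary sphere at the intermediate scale) and then decides whether $0\lr S_n$, and to verify the two lemmas for this family. Concretely, $T_k$ first reveals the occupation variables $\eta^\e_x$ for all $x\in\G_\e$ with $x$ within distance $1$ of $S_k$ (so it knows which such sites are in $V_\e$), then performs a breadth-first exploration of the connected component of $S_k$ in $G_\e$: starting from the occupied vertices near $S_k$, it repeatedly picks a discovered vertex, reveals the occupation status of every $\G_\e$-site within distance $1$ of it, and for each such occupied neighbour reveals the edge-variable $\s^\e_w$ (in the MA model also $\s^\e_x$ for the endpoints; for MA one reveals the mark of each vertex once it enters the exploration), thereby learning which edges emanate from it. Once the component of $S_k$ has been fully revealed, the algorithm knows all vertices connected to $S_k$; the event $\{0\lr S_n\}$ is then determined because any crossing path from $0$ to $S_n$ must pass through $S_k$ (as $0$ is inside $\L_k$ and $S_n$ is outside, for $k\le n$), so $0\lr S_n$ iff $0$ lies in the revealed component of $S_k$ and that component also touches $S_n$. (For $k=0$ one takes the component of the origin directly.) This is exactly the scheme of \cite{DRT2} adapted to the two-layer index set $I_\e=\G_\e\sqcup W_\e$.

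For Lemma \ref{firenze}, fix $i\in\G_\e$. The variable $\g^\e_i=\eta^\e_i$ is revealed by $T_k$ only if $i$ lies within distance $1$ of $S_k$, or if some vertex within distance $1$ of $i$ gets discovered in the exploration of the component of $S_k$, which forces a vertex of that component to lie within distance $2$ of $i$; in either case $S_k$ is connected to the ball $B(i,2)$ in $G_\e$. Hence $\d^\e_i(T_k)\le \bbP^{(\e)}_\l(S_k\lr B(i,2))$. Summing over $k$ and using translation invariance together with a standard union/first-point argument (the connection $S_k\lr B(i,2)$ implies $0\lr S_{a}$ for a translated origin and some $a$ in a range of size $O(1)$, after paying an $\e^{-d}$ factor coming from $p_\l(\e)\asymp\e^d$ to put a forced occupied point near the reference point), one bounds $\frac1n\sum_{k=1}^n\d^\e_i(T_k)$ by $c\,\e^{-d}\frac1n\sum_{a=0}^{n-1}\theta^{(\e)}_a(\l)$, exactly as in \cite[Section 3]{DRT2}; the $\e^{-d}$ and the constant are uniform over $\l\in[\d,\d^{-1}]$ because $p_\l(\e)/\e^d$ is bounded above and below there.

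For Lemma \ref{crepes}, note that an edge-variable $\g^\e_i=\s^\e_w$ with $w\in W_\e$ (or a mark variable in the MA model) is revealed by $T_k$ only after the exploration has discovered an endpoint of $w$, i.e. only if an endpoint of $w$ is occupied and connected to $S_k$; and the influence $\influ(0\lr S_n)$ is nonzero only if flipping $\s^\e_w$ can change the event, which in particular requires an endpoint of $w$ to be occupied. Since each endpoint $x$ of $w$ satisfies $\bbP^{(\e)}_\l(\eta^\e_x=1)=p_\l(\e)=O(\e^d)$, and for the influence term one needs \emph{both} the configuration to have an occupied endpoint and the resampled variable to matter, we gain an extra factor $O(\e^d)$ beyond what the corresponding $\G_\e$-term would give; more carefully, $\d^\e_i(T_k)\influ(0\lr S_n)\le \bbP^{(\e)}_\l(\text{some endpoint of }w\text{ occupied})^2$ up to connection events, and summing the resulting bound over the $O(\e^{-d})$ values of $w\in W_\e$ leaves $o(\e^d)$ uniformly in $\l\in[\d,\d^{-1}]$. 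Combining Lemmas \ref{crepes} and \ref{firenze} with \eqref{ermal4}: splitting the sum in \eqref{ermal4} over $I_\e=\G_\e\sqcup W_\e$, averaging \eqref{ermal4} over $k=1,\dots,n$, bounding the $W_\e$-part by Lemma \ref{crepes} and the $\G_\e$-part by Lemma \ref{firenze} together with $\influ(0\lr S_n)\le$ (influence estimate) and $\sum_{i\in\G_\e}$-to-pivotal conversion, yields precisely \eqref{ermal3}.

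The main obstacle I expect is the bookkeeping in Lemma \ref{firenze}: one must turn the revealment bound $\d^\e_i(T_k)\le\bbP^{(\e)}_\l(S_k\lr B(i,2))$ into the one-point function $\theta^{(\e)}_a(\l)$ uniformly in $i\in\G_\e$ and with the correct $\e^{-d}$ power, handling boundary effects near $S_k$, the finite-range ($\le 1$) geometry, and the fact that $S_k$ is a full sphere rather than a single point (so one needs a union bound over an $O(k^{d-1})$-size surface that nonetheless telescopes correctly when averaged over $k$, as in \cite{DRT2}). The influence estimate $\influ(0\lr S_n)\le$ something controllable, and its summation to $\bbE^{(\e)}_\l[|\mathrm{Piv}(0\lr S_n)|]$, is the other point requiring care, but it is essentially the pivotality–influence identity for the occupation variables together with independence of $\{i\in\mathrm{Piv}\}$ from $\eta^\e_i$ noted after the definition of $\mathrm{Piv}$.
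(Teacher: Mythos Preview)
Your outline is essentially the paper's approach, but two points in your argument for Lemma~\ref{firenze} are not quite right and deserve correction.

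\smallskip
\textbf{(a) The revealment bound fails for seed vertices.} Your algorithm begins by unconditionally revealing $\eta^\e_x$ for every $x$ within distance~$1$ of $S_k$. Hence when $d(i,S_k)<1$ you have $\d^\e_i(T_k)=1$, whereas the event ``$S_k\lr B(i,2)$ in $G_\e$'' requires at least one occupied vertex and so has probability strictly less than~$1$. Your claimed inclusion therefore fails precisely for these $i$. The paper handles this by splitting off the case $i\in H^k_\e$ (i.e.\ $i$ lies on an edge crossing $S_k$): since edges have length at most~$1$, for each fixed $i$ this happens for at most two values of $k$, so $\frac{1}{n}\sum_k \d^\e_i(T_k)\mathds{1}(i\in H^k_\e)\le 2/n$, and one then observes that $\e^{-d}\theta_0^{(\e)}(\l)=\e^{-d}p_\l(\e)\ge c(\d)$, so this piece is absorbed into the right-hand side of \eqref{rey1}. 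You flag ``boundary effects near $S_k$'' as an obstacle, and this is exactly what needs doing.

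\smallskip
\textbf{(b) The source of the $\e^{-d}$ and the role of the surface.} No union bound over the $O(k^{d-1})$-sized surface $S_k$ is needed, and the $\e^{-d}$ does not come from ``paying $p_\l(\e)^{-1}$ to plant a point''. In the paper (and in \cite{DRT2}) one writes, for $i\notin H^k_\e$,
\[
\d^\e_i(T_k)\le \sum_{j\in\G_\e:\,|i-j|\le 1}\bbP^{(\e)}_\l(j\lr S_k)\le \sum_{j\in\G_\e:\,|i-j|\le 1}\theta^{(\e)}_{d(j,S_k)}(\l)\,,
\]
using translation invariance of the \emph{one-point} connection to a sphere; the $\e^{-d}$ is simply the number $|\{j\in\G_\e:|i-j|\le 1\}|$ of terms. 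One then buckets by the integer $a$ with $a\le d(j,S_k)<a+1$ and notes that, for fixed $i$ and $a$, only $O(1)$ values of $k$ admit any such $j$ (since $|d(i,S_k)-a|\le c(d)$ forces $k$ into an interval of bounded length). Summing over $k$ gives $\sum_{k}\d^\e_i(T_k)\mathds{1}(i\notin H^k_\e)\le c(d)\,\e^{-d}\sum_{a=0}^{n-1}\theta^{(\e)}_a(\l)$, which together with~(a) yields \eqref{rey1}. Your ``telescoping'' intuition is correct, but it happens in the variable $k$ via $d(i,S_k)$, not via a surface union bound.
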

The  algorithm $T_k$ is described in Section \ref{sec_leia}, while Lemmas \ref{crepes} and \ref{firenze} are proved in Section \ref{sec_jawa}.


From  \eqref{ermal4}, by averaging among $k$, we have 
\begin{equation}\label{pena}
\theta_n^{(\e)}(\l)\leq 
c   \sum_{i\in I_\e }\Big[ \frac{1}{n} \sum _{k=1}^{n}\d^{(\e)} _i(T_k)\Big] \influ ( 0\lr  S_n ) 
\end{equation}
for any $\e\leq \e_0(\d)$ and for $c=c(\d)$.
By combining \eqref{pena} with 
  Lemmas \ref{crepes} and \ref{firenze} we get 
\begin{equation}
\label{ermal7}
\theta_n^{(\e)}(\l)\leq o(\e^{d})+ c\,\e^{-d}\frac{\sum_{k=0}^{n-1}\theta_k^{(\e)}(\l)}{n}\sum_{i\in \G_\e}\influ \bigl(0\lr  S_n\bigr)
\end{equation}
for any $\e\leq \e_0(\d)$ and for $c=c(\d)$.

Hence the crucial inequality  \eqref{ermal3} in Proposition \ref{prop_stimona} follows from \eqref{ermal7}  and the following lemma:
\begin{Lemma}\label{mosca} There exists $c=c(\d)>0$ such that,  for each event $A\subset \O_\e$ which is increasing in the random variables $\eta_i^\e$'s, it holds 
\[  \influ (A) \leq c \, \e^d \bbP^{(\e)} _{\l}(i\in \text{Piv}(A)) \qquad \forall i \in \G_\e\,,\; \forall \l \in [\d,\d^{-1}]\,.\]
\end{Lemma}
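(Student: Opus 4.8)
\textbf{Proof plan for Lemma \ref{mosca}.}

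The plan is to relate the influence $\influ(A)$, which measures the probability that resampling $\g^\e_i=\eta^\e_i$ flips the indicator of $A$, directly to the pivotality event. First I would observe that, since $A$ is increasing in the variables $\eta^\e_j$, the indicator $\mathds{1}_A(\g^\e)$ can only change value when we replace $\g^\e_i$ by an independent copy $\g^{\e,i}_i$ if one of the two values is $0$ and the other is $1$; moreover, in that case the configuration obtained from $\g^\e$ by forcing $\eta^\e_i=1$ must lie in $A$ while the one forcing $\eta^\e_i=0$ must lie outside $A$, which by definition is exactly the statement $i\in\text{Piv}(A)$. Hence the symmetric difference event $\{\mathds{1}_A(\g^\e)\ne\mathds{1}_A(\g^{\e,i})\}$ is contained in $\{i\in\text{Piv}(A)\}\cap(\{\eta^\e_i=1,\g^{\e,i}_i=0\}\cup\{\eta^\e_i=0,\g^{\e,i}_i=1\})$.

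Next I would exploit the independence structure recorded in Section \ref{pizzarossa}: the event $\{i\in\text{Piv}(A)\}$ depends only on $(\g^\e_j:j\ne i)$ and hence is independent both of $\eta^\e_i$ and of the resampled value $\g^{\e,i}_i$; and $\eta^\e_i$ and $\g^{\e,i}_i$ are independent of each other, each being Bernoulli with parameter $p_\l(\e)$. Therefore
\begin{align*}
\influ(A) & \leq \bbP^{(\e)}_\l\bigl(i\in\text{Piv}(A)\bigr)\cdot\bbP^{(\e)}_\l\bigl(\{\eta^\e_i=1,\g^{\e,i}_i=0\}\cup\{\eta^\e_i=0,\g^{\e,i}_i=1\}\bigr)\\
& = \bbP^{(\e)}_\l\bigl(i\in\text{Piv}(A)\bigr)\cdot 2\,p_\l(\e)\bigl(1-p_\l(\e)\bigr)\\
& \leq 2\,p_\l(\e)\,\bbP^{(\e)}_\l\bigl(i\in\text{Piv}(A)\bigr)\,.
\end{align*}
Finally, since $p_\l(\e)=\l\e^d/(1+\l\e^d)\leq\l\e^d\leq\d^{-1}\e^d$ for all $\l\in[\d,\d^{-1}]$, we obtain $\influ(A)\leq 2\d^{-1}\e^d\,\bbP^{(\e)}_\l(i\in\text{Piv}(A))$, which is the claimed bound with $c=c(\d):=2\d^{-1}$.

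I do not expect a serious obstacle here; the only point requiring care is the bookkeeping in the first step — checking that monotonicity in the $\eta$-variables genuinely forces the flip to witness pivotality in the correct ($+$ versus $-$) direction, and that the definition of $\text{Piv}(A)$ used in Section \ref{poligono} (flipping $\eta^\e_i$ to $1-\eta^\e_i$) matches the event extracted from the resampling. One should also note that $i\in\G_\e$ is essential: for $i\in W_\e$ the variable $\g^\e_i=\s^\e_i$ is continuous and the above Bernoulli argument does not apply, but the statement is only claimed for $i\in\G_\e$, so this causes no difficulty.
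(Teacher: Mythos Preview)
Your proof is correct and follows essentially the same route as the paper's: both reduce the influence event to $\{i\in\text{Piv}(A)\}\cap\{\eta^\e_i\neq\g^{\e,i}_i\}$, use the independence of pivotality from $\eta^\e_i$ and $\g^{\e,i}_i$, and arrive at the bound $\influ(A)\leq 2\,p_\l(\e)\,\bbP^{(\e)}_\l(i\in\text{Piv}(A))$. The paper phrases the first step via a symmetry argument (halving to the case $\g^\e_i=1,\g^{\e,i}_i=0$) rather than via monotonicity; in fact neither proof actually needs the increasing hypothesis, since whenever $\g^\e_i\neq\g^{\e,i}_i$ the configuration $\g^{\e,i}$ is exactly the flip $\hat\g^\e$, so $\mathds{1}_A(\g^\e)\neq\mathds{1}_A(\g^{\e,i})$ is already equivalent to $i\in\text{Piv}(A)$.
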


The  proof of the above lemma is given in Section \ref{sec_jawa}. This concludes the proof of Proposition \ref{prop_stimona}.

\section{The algorithm $T_k$}\label{sec_leia}
Fixed  $k\in \{1,\ldots, n\}$ we are interested in constructing an algorithm $T_k$ that determines if the event $\{0\leftrightarrow S_n\} $ takes place in $G_\e$.
We  introduce  the sets 
\begin{align*}
&  L_\e=\{ \{x,y\}: x\not =y \text{ in  }\G_{\e}\,,\; f(|x-y|)>0\}\,,  \\
& \rosso{H^k_\e}=\{\{x,y\}\in L_\e \,:\,\overline{xy}\text{ intersects }S_k\}\,,
\end{align*}
where $f:=g$ in the RC model, $f:=h$ in the MA model and  $\overline{xy}$ denotes the segment in $\bbR^d$  with extremes $x,y$. For simplicity, we  set  $xy:=\{x,y\}$ with the convention that $x\prec_{\rm lex} y$.

\smallskip

We fix an ordering in $L_\e$ such that 
the elements of 
 \rosso{$H^k_\e$} precede  the elements of $L _\e\setminus \rosso{H^k_\e}$. 
Finally, we introduce the  random variables $\bigl( U^\e_{x,y}\,:\, xy \in L_\e\bigr)$ defined on $(\rosso{\O_\e}, \bbP_\l^{(\e)})$ 
as follows:
\[
U^\e_{x,y}:= 
\begin{cases}
& \mathds{1}\bigl ( \s^\e_{x,y} \leq g(|x-y|) \bigr) \text{ in the RC model}\,,\\
& \mathds{1}\bigl( |x-y| + |\s^{\e}_x|+|\s^\e_y| +|\s^\e_x-\s^\e_y| \leq \z\bigr) \text{ in the MA model}\,.
\end{cases}
\]
Note that, by definition of the edge set  $E_\e$ of the graph $G_\e$, we have that $\{x,y\}\in E_\e$ with $x\prec_{\rm lex} y$  if and only if  $xy \in L_\e$ and  $\eta^\e_x=\eta^\e_y= U_{x,y}^\e=1$. 

\smallskip

The algorithm is  organised by  \emph{meta-steps} parameterised by the elements of $L_\e$. 
 \rosso{$t(r)$} will be the number of revealed variables   up to the \rosso{$r^{th}$} meta-step included. 
 \rosso{At each meta-step the algorithm will provide two sets  $F_r, V_r$:  $V_r$ is roughly the set of vertices connected to  some edge in $E_\e \cap H^k_\e$ discovered up to the $r^{th}$ meta-step, while $F_r$ is roughly the set of edges connected to some  edge in  $E_\e \cap H^k_\e$   discovered up to the $r^{th}$ meta-step. We recall that $E_\e$ denotes the set of edges of the graph $G_\e$.
 }

\medskip
%


\centerline{\emph{Beginning of the algorithm}}

\medskip
{\bf First meta-step}.
 Let $xy$ be the first  element of $\rosso{H^k_\e}$.  \underline{Reveal}  the random variables $\eta^\e_x$ and  $\eta^\e_y$. Set $e_1:=x$, $e_2:= y$.
 
$\bullet$  If $\eta^\e_x \eta^\e_y=0$, then  set $(F_1,V_1):= (\emptyset, \emptyset)$ and $t(1)=2$, thus completing the first-meta step in this case.

$\bullet$ If $\eta^\e_x \eta^\e_y=1$, then in the RC model   \underline{reveal}  the random variable $\s^\e_{x,y}$  and set  $e_3 := xy$, $t(1):=3$, while in the MA model  \underline{reveal} the random 
 variables $\s_x^\e$, $ \s_y^\e$ and  set $e_3 :=  \dot{x}$, $e_4:= \dot{y}$, $t(1):=4$. In both cases  set 
\begin{equation}\label{ora}
(F_1,V_1):=
\begin{cases} 
(\{xy\},  \{x,y\}) &\text{ if } \rosso{U^\e _{x,y}}=1\,,
\\(\emptyset ,\emptyset )&\text{ otherwise}\,,
\end{cases}
\end{equation}
 thus completing the first meta--step in this case.
\[\text{$*$ End of the first meta--step $*$}\]

\medskip

{\bf Generic $r^{th}$ meta-step  for $r\geq 2$}.
 Distinguish two cases. 
 If $ r \leq |\rosso{H^k_\e}|$, then let $xy$ be the $r^{th}$ element of $\rosso{H^k_\e}$. If $r > |\rosso{H^k_\e}|$,  look for the minimum edge  $xy$ in $L_\e \setminus \rosso{H^k_\e}$ such that 
$\{x,y\} \cap V_{r-1}\not =\emptyset$.  
  If such an edge does not exist, then set $R_{\rm end}:= r-1$ \rosso{and}  $T_{\rm end}:=t(r-1)$, \rosso{all the generic 
  meta-steps are   completed} hence  move to the final step.

\medskip
Set $N=0$ ($N$ will play the role of counter).\\

  $\bullet$   If $\eta^\e_x$  has not been   revealed yet,  do the following:  
 \underline{reveal}  the random variable $\eta^\e_x$,  
 increase   $N$ by $+1$,
 and set $e_{t(r-1)+N}:= x$. 
 
   $\bullet$   If $\eta^\e_y$  has not been  revealed yet, then 
  \underline{reveal}  the random variable $\eta^\e_y$, increase   $N$ by $+1$ and 
  set $e_{t(r-1)+N}:= y$.

 $\bullet$  If $\eta^\e_x \eta^\e_y=0$, then  set $(F_r,V_r):= (F_{r-1}, V_{r-1})$ and $t(r):=t( r-1)+N$, 
  thus completing the $r^{th}$ meta--step in this case.

$\bullet$  If $\eta^\e_x \eta^\e_y=1$, then:

\begin{itemize}
\item[$\star$]  In the RC model    \underline{reveal}  the random variable $\s^\e_{x,y}$,   
increase   $N$ by $+1$,
 set   $e_{t(r-1)+N}:= xy$;
 \item[$\star$] In the MA model, if  $\s_x^\e$ has not been revealed yet, then  \underline{reveal} it,  
 increase   $N$ by $+1$,
    set   $e_{t(r-1)+N}:=\dot x $. In addition,   if  $\s_y^\e$ has not been revealed yet, then  \underline{reveal} it, 
     increase   $N$ by $+1$,
     set   $e_{t(r-1)+N}:= \dot y$.
     
     \smallskip
 
 In both the above $\star$--cases  set $t(r):= t(r-1) +N$, 
  \[(F_r,V_r):=
\begin{cases}
(F_r\cup\{xy\},V_r\cup\{x,y\}) &\text{ if } \rosso{U^\e _{x,y}}=1 ,
\\(F_{r-1},V_{r-1})&\text{ otherwise},
\end{cases}\]
 thus completing the $r^{th}$ meta-step.
  \end{itemize}

\medskip

{\bf Final step}. If $0\in V_{R_{\rm end}}$  and there exists a path from $0$ to 
$ V_{R_{\rm end}}\setminus (-n,n)^d$ inside  the graph $\left( V_{R_{\rm end}}, F_{R_{\rm end}}\right)$ 
 then give as output ``$0 \lr  S_n$'', otherwise give as output ``$0 \not \lr  S_n$''.

\medskip 
\centerline{\emph{End of the algorithm}}

\medskip

We conclude with some comments on the algorithm.

First, since $L_\e$ is finite, the algorithm always stops.
Moreover we note that, when the algorithm has to check if  \rosso{$U^\e _{x,y}=1$}, this is  possible using only  the revealed random variables.

By construction, in  the algorithm $T_k$, $V_{R_{\rm end}} := \{x \in \G_\e\,:\, x \lr  S_k\}$.
Moreover, $F_{R_{\rm end}}$ is the set  of edges belonging  to some path in $G_\e$ for which there is an edge $\{x,y\}$ such that the segment $\overline{xy}$ intersects $S_k$ (we shortly say that the paths intersect $S_k$).
 If $0\lr  S_n$  then there must be a path in $G_\e$ from $0$ to some point $x$  in $\G_\e \setminus (-n,n)^d $, and this path must intersect $S_k$. As a consequence, if  $0\lr  S_n$  then  there exists a path from $0$ to 
$ V_{R_{\rm end}}\setminus (-n,n)^d$ inside  the graph $\left( V_{R_{\rm end}}, F_{R_{\rm end}}\right)$. The other implication is trivially fulfilled, hence the  output of the algorithm is correct.

Finally, we point out that 
 the revealed random variables are, in chronological order, 
the ones associated to the indexes $e_1, e_2, \dots, \dots, e_{T_{\rm end}}$ (in the cases  $e_i =x$, $e_i =\dot{x}$ and  $e_i = xy$,    the associated random variables are given by  $\eta^\e _x$, $\s^\e _{x}$ and  $\s^\e_{x,y}$, respectively).

\section{Proof of Lemmas \ref{crepes}, \ref{firenze} and \ref{mosca}}\label{sec_jawa}
In this section we prove Lemmas \ref{crepes}, \ref{firenze} and \ref{mosca} which enter in the proof of Proposition \ref{prop_stimona} as discussed in Section \ref{pizzarossa}.

To simplify the notation, given $\a\in\bbR$,  we will denote by $O(\e^\a)$ any quantity  which can be bounded  from above by $C \e^\a$, where the constant $C$ can depend on $\d$ but not on the particular value  $\l \in [\d,\d^{-1}]$. Similarly, we denote by  $o(1)$ any quantity  which can be bounded  from above by $C f(\e)$, where $\lim _{\e\downarrow 0} f(\e)=0$, and both $f$ and $C$ can depend on $\d$ but not on the particular value $\l \in [\d,\d^{-1}]$. We point out that the above quantities could depend on $n$.


\subsection{Proof of Lemma \ref{crepes}}
We consider first the RC model.  Recall that in this case \rosso{$W_\e= L_\e$} (cf. \eqref{gelsomino}).
Let  $i=\{x,y\}\in L_\e$ with $x \prec _{\rm lex} y$. If $\s^\e_{x,y}$ is revealed by the algorithm, then  it must be $\eta_x^\e=\eta_y^\e=1$. Hence 
we have $\rosso{\d^{\e}_i}(T_k) \leq \bbP^{(\e)}_\l (\eta_x^\e=\eta_y^\e=1) = O(\e^{2d})$.
On the other hand,  by definition,
\begin{equation}\label{teina}
 \influ(0\lr  S_n)
=\bbP_{\l}^{(\e)}(\mathds{1}_A(\g^{\e})\neq \mathds{1}_A(\g^{\e,i})) \text{ with }A:= \{0\lr  S_n\}\,.
\end{equation}
If $\mathds{1}_A(\g^{\e})\neq \mathds{1}_A(\g^{\e,i})$ then it must be $\eta_0^\e=1, \eta_x^\e=1, \eta_y^\e=1$.
As a consequence, we get that $\influ (0\leftrightarrow S_n)
 \leq \bbP_{\l}^{(\e)}(\eta_0^\e=1, \eta_x^\e=1, \eta_y^\e=1)$.
The last probability is $O(\e^{2d})$ if the edge $\{x,y\}$ contains the origin (and there are $O(\e^{-d})$ of such edges in $W_\e$), while it is $O(\e^{3d})$ if the edge $\{x,y\}$  does not contain the origin (and there are $O(\e^{-2d})$ of such edges in $W_\e$).
Using that  $ \rosso{\d^{\e}_i}(T_k)  = O(\e^{2d})$, we get \eqref{rinoceronte}.

We now move to the MA model. Let $i=\dot{x}\in W_\e$.  If $\s^\e_{x}$ is revealed by the algorithm, then  it must be $\eta_x^\e= 1$. Hence, $\rosso{\d^{\e}_i}(T_k)  = O(\e^{d})$. On the other hand, by \eqref{teina}, if $\mathds{1}_A(\g^{\e})\neq \mathds{1}_A(\g^{\e,i})$ then it must be $\eta_0^\e=\eta_x^\e=1$.
 Hence,
$\influ (0\lr  S_n)=O(\e^d)$ if $x=0$ and $\influ (0\lr  S_n)=O(\e^{2d})$ if $x \not =0$. Since $|W_\e|=O(\e^{-d})$, we get \eqref{rinoceronte}, thus concluding the proof of Lemma \ref{crepes}.

\subsection{Proof of Lemma \ref{firenze}}\label{dim_firenze}

 In what follows, constants $c_*(d)$, $c(d)$,.. are positive constants depending only on the dimension $d$.  \rosso{We also write  $i \in H_\e^k$ if the site  $i$  belongs  to some edge in $H^k_\e$. Since the edges in $H_\e^k$ have length \blu{smaller than}  $1$, if $i\in H_\e^k$ then $d_e(i, S_k)<1$, where $d_e(\cdot, \cdot)$ denotes the Euclidean distance.
 This implies that 
$|\{ k \in \bbN: \, i \in H_\e^k\}|\leq 2$.}

 We observe that, \rosso{when $i \not \in H_\e^k$},  
 \begin{equation*}\label{shock}
  \{\rosso{\eta_i^\e} \text{ is revealed by } T_k \} \subset
   \{ i\lr  S_k \} \cup \{ \exists j\in \G_\e\setminus\{i\} : |i-j|<1 \,, \; j \lr  S_k \}
   \,.
\end{equation*}
Hence we can bound
\begin{equation}
\label{d1}
\d^\e_i (T_k)\rosso{\mathds{1}(i \not \in H^k_\e)} \leq \sum_{ j\in \G_\e : |i-j|\leq 1  } \bbP^{(\e)}_{\l}( j \lr  S_k )\,.
\end{equation}
By translation invariance 
we have $\bbP^{(\e)}_{\l}( j \lr  S_k )\leq \theta ^{(\e)} _{d(j,S_k)}(\l) $, where $d(j,S_k)$ denotes
the distance in uniform norm between $j$ and $S_k$. Note that  $d(j,S_k)\leq n$. Hence we can write
\[ \d^\e_i (T_k)\rosso{\mathds{1}(i \not \in H^k_\e)} \leq \sum_{ j\in \G_\e : |i-j|\leq 1  } \bbP^{(\e)}_{\l}( j \lr  S_k )\leq  \sum_{ j\in \G_\e : |i-j|\leq 1  } \theta ^{(\e)} _{d(j,S_k)}(\l)\,.\]
If the integer  $a\geq 0$ satisfies $a\leq d(j,S_k) \leq a+1$, then we can bound $\theta ^{(\e)} _{d(j,S_k)}(\l)\leq \theta ^{(\e)} _a(\l)$. Hence we can write
\begin{equation}\label{torino1}
 \d^\e_i (T_k) \rosso{\mathds{1}(i\not  \in H^k_\e)} \leq \sum_{a=0}^{n-1}  \theta ^{(\e)} _a(\l) 
|\{ j\in \G_\e : |i-j|\leq 1, \; a\leq d(j,S_k) \leq a+1 \}| \,.
\end{equation}
Let us now consider, for a fixed $a$,
\begin{equation}\label{torino2}
\sum_{k=1}^n  
|\{ j\in \G_\e : |i-j|\leq 1, \; a\leq d(j,S_k) \leq a+1 \}| \,.
\end{equation}
 If  $|i-j|\leq 1 $ and $ a\leq d(j,S_k) \leq a+1$, then  it must be $a- c_* \leq d(i,S_k) \leq a+ c_*$, for some constant $c_*=c_*(d)$. 
Since $k$ varies among the integers, there are  at most $c(d)$ values of $k$ for which $a- c_* \leq d(i,S_k) \leq a+ c_*$. For the other values of $k$ the associated addendum in \eqref{torino2} is simply zero. We conclude that
the sum in \eqref{torino2} is bounded by $c(d) \e^{-d}$. Therefore, averaging \eqref{torino1} among $k$, we get
\begin{equation}\label{tenacia}
\frac{1}{n}\sum_{k=1}^n \d^\e_i (T_k)  \rosso{\mathds{1}(i \not \in H^k_\e)}\leq   c(d) \e^{-d} \frac{1}{n}\sum_{a=0}^{n-1}  \theta ^{(\e)} _a(\l)  \,.
\end{equation}

\rosso{
On the other hand, by the observation made at the beginning of the proof, we can bound
$\sum_{k=1}^n \d^\e_i (T_k)  \mathds{1}(i \in H^k_\e)\leq  2$,
while 
\begin{equation}
 \e^{-d} \sum_{a=0}^{n-1}  \theta ^{(\e)} _a(\l)   \geq \e^{-d}  \theta ^{(\e)} _0(\l)
=\e^{-d} \bbP_\l ^{(\e) } (0 \lr S_0)= \e^{-d} \bbP_\l ^{(\e)} (\eta_0^\e=1)
 = \frac{\l}{1+\l \e^d}
 \,.
 \end{equation}
We therefore conclude that 
\begin{equation}\label{calmissima}
\frac{1}{n}\sum_{k=1}^n \d^\e_i (T_k)  \mathds{1}(i  \in H^k_\e)\leq   c(\d) \e^{-d} \frac{1}{n}\sum_{a=0}^{n-1}  \theta ^{(\e)} _a(\l)  \,.
\end{equation}
The thesis then follows from \eqref{tenacia} and \eqref{calmissima}.
}

\subsection{Proof of Lemma \ref{mosca}}
By symmetry we have 
\begin{equation}
\label{r2}
\begin{split}
\influ(A)& =2\bbP_{\l}^{(\e)} (\mathds{1}_A(\g^{\e})\neq \mathds{1}_A(\g^{\e,i}),\g_i^{\e}=1,\g_i^{\e,i}=0)\\
& =2\bbP_{\l}^{(\e)} (\mathds{1}_A(\g^{\e})\neq \mathds{1}_A(\hat \g^{\e}),\g_i^{\e}=1,\g_i^{\e,i}=0)\,,
\end{split}
\end{equation}
where the configuration $\hat \g^\e$ is obtained from $\g^\e$ by changing the value of $\g^\e_i=\eta^\e_i$.  The inequality 
$\mathds{1}_A(\g^{\e})\neq \mathds{1}_A(\hat \g^{\e})$ is equivalent to the fact that  $i$ is  pivotal for the event $A$ and the configuration 
$\g^\e$. Moreover, this event is independent from $\g_i^{\e}$, $\g_i^{\e,i}$. Hence \eqref{r2} implies that
\begin{equation*}
\begin{split}
 \influ (A)\leq2\bbP^{(\e)} _{\l}(i \in \rosso{\text{Piv}(A)} ,\eta_i^{\e}=1 )\leq 2\bbP^{(\e)} _{\l}(i\in \rosso{\text{Piv}(A)})p_\l(\e)\,.\end{split}
\end{equation*}
This concludes the proof of Lemma \ref{mosca}.

\section{Proof of \eqref{uri1} in Proposition \ref{prop_discreto}}\label{sec_discreto1}
In the proof below, constants $c,c_1, c_2 \dots $ are understood as positive and $\e$--independent and they can change from line to line. To simplify the notation, given $\a\in \bbR$,  we will denote by $O(\e^\a)$ any quantity  which can be bounded  from above by $C \e^\a$, where the constant $C$ can depend on $\l$. Similarly, we denote by  $o(1)$ any quantity  which can be bounded  from above by $C f(\e)$, where $\lim _{\e\downarrow 0} f(\e)=0$, and both $f$ and $C$ can depend on $\l$.  \rosso{All the above quantities can depend also on $n$, which is fixed once and for all}.

\smallskip

\rosso{Recall that $n\geq 1$}. To simplify the notation we take \blu{$k=n$} (the general case is similar). 
Recall the notation introduced in Section \ref{solitario}. We use the standard convention to identify an element $\xi$ of $\cN$ with the atomic measure $\sum_{x\in \xi}\d_x$, which will be denoted again by $\xi$. In particular, given $U\subset \bbR^d$, $\xi(U)$ equals $|\xi\cap U|$. In addition, given $\xi  \in \cN  $ and $x \in \bbR^d$, we define the translation $\t_x \xi  $ as the new set $\xi -x$.

We define the events 
\begin{align*}
& A_\e:=\bigl\{ \xi \in \cN\,:\, \xi (R_x^\e)\in \{0,1\}  \;\; \forall x \in \G_\e \bigr \}\,,\\
& B_\e:=\bigl\{ \xi\in \cN \,:\,\xi (R_0^\e)=1\bigr\}\,.
\end{align*}
If $\xi (R_x^\e)=1$,  then 
we define $\bar{x}$ as  the unique point of $\xi \cap R_{x}^{\e}$. 
On the space $\cN$ we define the functions   
\begin{equation}\label{pioggia}\varphi_x^{\e}=\mathds{1}(\xi( R_x^{\e})=1)\,,\qquad x\in \G_\e\,.
\end{equation}
Recall Warning \ref{fragola} in Section \ref{poligono}.
\begin{Lemma}\label{uffa1}
It holds
\begin{equation}\label{olga}
\tilde  \theta_n(\l)=\bbP _{0,\l}( 0 \lr   S_n )= \lim_{\e\downarrow  0}\bbP_\l (\bar{0} \lr  S_n \,|\,B_{\e}) \,.
\end{equation}
\end{Lemma}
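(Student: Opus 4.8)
The idea is to compare the Palm measure $\bbP_{0,\l}$ (in which $0\in\xi$) with the conditioning $\bbP_\l(\,\cdot\mid B_\e)$ (in which exactly one point $\bar 0$ of an ordinary PPP sits in the tiny cube $R_0^\e$ around the origin), and to show that, as $\e\downarrow 0$, the role of $\bar 0$ converges to that of the deterministic point $0$. The natural tool is the Mecke equation (or the standard relation between the Palm distribution and the PPP): for a nonnegative measurable functional $F$ on $\cN\times\bbR^d$,
\[
\bbE_\l\Bigl[\sum_{x\in\xi\cap R_0^\e} F(\tau_x\xi,x)\Bigr]=\l\int_{R_0^\e}\bbE_{0,\l}\bigl[F(\xi,x)\bigr]\,dx .
\]
Applying this with $F(\xi,x)=\mathds 1\{x\lr S_n \text{ in }\cG(\xi,\s)\}$ (and using the extra product field $\s$, which does not interfere since it is independent of $\xi$), the left side is $\bbE_\l[\mathds 1(B_\e)\,\mathds 1(\bar 0\lr S_n)]$ up to the event $\{\xi(R_0^\e)\ge 2\}$, which has probability $O(\e^{2d})$, while $\bbP_\l(B_\e)=\l\e^d e^{-\l\e^d}=\l\e^d(1+o(1))$. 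Dividing, one gets
\[
\bbP_\l(\bar 0\lr S_n\mid B_\e)=\frac{1}{\e^d}\int_{R_0^\e}\bbP_{0,\l}\bigl(x\lr S_n\text{ in }\cG(\tau_{-x}(\xi),\s)\bigr)\,dx+o(1),
\]
after absorbing the $O(\e^{2d})$ term.

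\textbf{Key steps.} First I would set up the Mecke identity as above and carefully handle the event $\{\xi(R_0^\e)\ge 2\}$, bounding its contribution by $\bbP_\l(\xi(R_0^\e)\ge 2)=O(\e^{2d})$, which is $o(\e^d)$ and hence negligible after normalisation by $\bbP_\l(B_\e)\asymp\e^d$. Second, by translation covariance of the PPP and of the graph construction, $\bbP_{0,\l}(x\lr S_n\text{ in }\cG)$ with $0$ replaced by a point $x\in R_0^\e$ equals $\bbP_{0,\l}(0\lr S_{n}+(-x)\text{ in }\cG)$; since $\|x\|_\infty<\e$, the shifted box $S_n+(-x)$ differs from $S_n$ only near its boundary, at scale $\e$. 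Third — and this is the crux — I must show that this boundary perturbation changes the connection probability by only $o(1)$ as $\e\downarrow 0$: i.e.\ that $x\mapsto\bbP_{0,\l}(0\lr S_n^{(x)})$ is continuous at $x=0$ uniformly enough to conclude. The cleanest way is a sandwich: for all $x\in R_0^\e$ we have $\L_{n-\e}\subset \L_n+(-x)\subset\L_{n+\e}$ in the appropriate sense, so
\[
\bbP_{0,\l}(0\lr S_{n+1})\ \ge\ \bbP_{0,\l}(0\lr S_n^{(x)})\ \ge\ \bbP_{0,\l}(0\lr S_{n-1}),
\]
for $\e<1$, and then one invokes the (easier) fact that $\e\mapsto\bbP_\l(\bar0\lr S_n\mid B_\e)$ has a limit trapped between $\tilde\theta_{n-1}(\l)$-type and $\tilde\theta_{n+1}(\l)$-type quantities — but this is too crude, since $\tilde\theta_{n-1}\ne\tilde\theta_{n+1}$ in general. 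So instead the right statement is the genuinely continuous one: because the graph $\cG$ has all edges of length $<1$ and the PPP has no atoms, the event $\{0\lr S_n\}$ is, up to a $\bbP_{0,\l}$-null set, insensitive to an infinitesimal translation of the target sphere; formally, $\bigcap_{\e>0}\{0\lr S_n^{(x)}\text{ for some }x, \|x\|_\infty<\e\}$ and $\bigcup_{\e>0}\{0\lr S_n^{(x)}\text{ for all }\|x\|_\infty<\e\}$ differ by a null event, because a connecting path realising $0\lr S_n$ uses finitely many points whose $\|\cdot\|_\infty$-coordinates almost surely avoid the exact value $n$. Dominated convergence then yields the claim.

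\textbf{Main obstacle.} The delicate point is precisely this last continuity argument: one must argue that, $\bbP_{0,\l}$-almost surely, no point of $\xi$ lies exactly on $S_n$ and more substantively that whether $0$ reaches $\|\cdot\|_\infty\ge n$ is stable under shrinking the threshold from $n$ by an $O(\e)$ amount — i.e.\ that the "last" point on a connecting path does not sit within distance $\e$ of $S_n$ with non-vanishing probability. This is handled by noting that for a.e.\ realisation the cluster of $0$ is locally finite, so it either reaches $S_n$ with some slack or does not reach it at all, and the "borderline" configurations form a null set; monotone/dominated convergence in $\e$ then closes the gap. Everything else — the Mecke identity, the $O(\e^{2d})$ error, and the normalisation $\bbP_\l(B_\e)=\l\e^d(1+o(1))$ — is routine.
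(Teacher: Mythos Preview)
Your argument is correct and follows the paper's route: apply the Campbell/Mecke identity, discard the contribution of $\{\xi(R_0^\e)\ge 2\}$ at cost $O(\e^{2d})$, and then handle the $O(\e)$-perturbation of the target surface $S_n$ using that the PPP a.s.\ places no point exactly on $S_n$. The only difference is organisational---the paper carries the shifted sphere on the $\bbP_\l(\cdot\mid B_\e)$ side and closes the gap with the quantitative estimate $\bbP_\l\bigl(\xi(\L_{n+\e}\setminus{\stackrel{\circ}{\L}}_{n-\e})\ge 1\mid B_\e\bigr)=O(\e)$, whereas you move the shift to the Palm side and invoke a.s.\ stability plus dominated convergence; these are two phrasings of the same idea.
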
 
\begin{proof} 
We use the properties of the Campbell measure and  Palm distribution  stated in \cite[Thm. 12.2.II and Eq. (12.2.4)]{DVJ}. We apply  \cite[Eq. (12.2.4)]{DVJ} with  
\[ 
g(x, \xi):=\mathds{1}(x \in R^\e_0)\int _{\Sigma}  P(d\s) \mathds{1}\bigl(0 \leftrightarrow  S_n \text{ in } \cG(\xi,\s)\bigr ) 
\] 
(see  the notation of Section \ref{solitario}) and  get 
\begin{equation}\label{sam1} 
\begin{split}
\l \e^d \bbP_{0,\l}( 0 \lr  S_n) & = \l E_{0,\l}   \Big[ \int _{\bbR^d} \rosso{dx} \,g(x,\xi)\Big]
 = E_\l \Big[ \int _{\bbR^d} \xi (dx) g(x, \t_x \xi) \Big]\\
& = \bbE _\l \Big[ \int _{R^\e_0} \xi (dx)\mathds{1} ( x \lr    S_n(x))    \Big]
 \,,
 \end{split} \end{equation}
 where  $S_n(x):= S_n +x$.
We set $ N_\e:= \xi ( R_0^\e)$. $N_\e$ is a Poisson random variable with parameter $\l \e^d$.
We point out that 
\begin{equation}\label{pezzo2}
\begin{split}
& \bbE_\l  \Big[ \int _{R^\e_0} \xi (dx)\mathds{1} ( x \leftrightarrow  S_n(x))     \mathds{1} (N_\e \geq 2)  \Big]  \leq \bbE_\l [ N_\e    \mathds{1} (N_\e \geq 2)]\\
& = \bbE_\l[ N_\e ]- \bbP_\l ( N_\e=1) = \l \e^d(1 -  e^{- \l \e^d})= O( \e^{2d})\,.
\end{split}
\end{equation}
Moreover, 
 we can bound
\begin{equation}\label{pezzo1}
\begin{split}
\bbP_\l  ( \{ \bar 0\lr  S_{n + \e}\} \cap B_\e )& \leq 
\bbE _\l\Big[ \int _{R^\e_0} \xi (dx)\mathds{1} ( x \leftrightarrow  S_n(x)) \mathds{1} (N_\e =1)    \Big ]\\
& \leq  \bbP _\l( \{ \bar 0 \lr  S_{n -\e}\} \cap B_\e )\,.
\end{split}
\end{equation}
Since $\bbP_\l(B_\e)= \l \e^d (1+o(1) )$, from \eqref{sam1},  \eqref{pezzo2} and \eqref{pezzo1}  we conclude that 
\begin{align}
& \bbP_{0,\l}( 0 \leftrightarrow   S_n)  \geq    \bbP_\l  (  \bar 0 \lr   S_{n + \e} \,  |\, B_\e ) +o(1) \,,\label{sara1}\\
& \bbP_{0,\l}( 0 \leftrightarrow   S_n)  \leq 
  \bbP_\l  (  \bar 0\lr  S_{n - \e} \,|\, B_\e )+o(1)\,.\label{sara2}
\end{align}
On the other hand, $
\bbP_\l\bigl( \xi  ( \L_{n+\e}\setminus {\stackrel{\circ}{\L}} _{n-\e} ) \geq 1\bigr)=O(\e)$.
Since for $\e $ small (as we assume from now on)  the events $\{ \xi  ( \L_{n+\e}\setminus {\stackrel{\circ}{\L}} _{n-\e} ) \geq 1 \}$ and $B_\e$ are independent, we conclude that $\bbP_\l \bigl( \xi  ( \L_{n+\e}\setminus {\stackrel{\circ}{\L}} _{n-\e} ) \geq 1 \,|\, B_\e\bigr)= O(\e)$. 
As a consequence, we have 
\begin{equation}\label{nuvola1}
\begin{split}
 \bbP_\l ( \bar  0 \lr   S_{n  }\, |\, B_\e ) & = 
\bbP_\l  (\bar  0 \lr    S_{n }  \text{ and } 
\xi ( \L_{n+\e}\setminus {\stackrel{\circ}{\L}} _{n-\e} ) =0  | B_\e )
+o(1)  \\
& \leq
 \bbP_\l  (  \bar 0 \lr   S_{n + \e} \,  |\, B_\e )
+o(1)\,,
\end{split}
\end{equation}
and
\begin{equation}\label{nuvola2}
\begin{split}
 \bbP _\l ( \bar  0 \lr   S_{n -\e }\, |\, B_\e ) & = 
\bbP_\l  (\bar  0 \lr    S_{n-\e }  \text{ and } 
\xi ( \L_{n+\e }\setminus {\stackrel{\circ}{\L}} _{n-\e} ) =0  | B_\e )
+o(1)   \\
& \leq
 \bbP_\l  (  \bar 0 \lr   S_{n } \,  |\, B_\e )
+o(1) \,.
\end{split}
\end{equation}
By combining \eqref{sara1} with \eqref{nuvola1}, and \eqref{sara2} with \eqref{nuvola2}, we get 
\begin{equation}
\bbP_{0,\l} ( 0 \lr     S_n) = \bbP_{\l} ( \bar{0} \lr     S_n| B_\e) +o(1)\,,
\end{equation}
which is equivalent to  \eqref{olga}.
\end{proof}

\rosso{We  now enlarge   the probability space $(\Sigma, P)$ introduced in Section \ref{solitario} as follows. For the RC model the enlarged probability space is obtained from $(\Sigma, P)$ by adding independent uniform random variables $\s_{x,y} $ indexed by the 
pairs $(x,y)$ with 
 $x\not= y$ and  such that  $x,y\in \G_\e$ for some $\e=1/m$, $m $ being a positive integer.   We take these additional random variables independent from the original random variables $\s_{(n_1,k_1), (n_2,k_2)}$ defined in $(\Sigma, P)$. We point out a slight abuse of notation, since in Section \ref{solitario} we have defined $\s_{x,y}$ by means of \eqref{Babbo} when $x,y \in \xi$. On the other hand, the  probability that the realization $\xi$ of a  Poisson point process  has some vertex  in $\cup _{m=1}^\infty \G_{1/m}$ is zero, thus implying that the notation $\s_{x,y}$ is not ambiguous with probability $1$.
 For the MA model the enlarged probability space is obtained from $(\Sigma, P)$ by adding i.i.d.  random variables $\s_{x} $ with distribution $\nu$,  indexed by the points  $x$ belonging to some $ \G_\e$ as  $\e=1/m$ and  $m $ varies among the  positive integers.   Again the new random variables are independent from the ones previously defined in $(\Sigma, P)$ and  again the notation is not ambiguous  with probability $1$. To avoid new symbols, we denote by 
 $(\Sigma, P)$ the enlarged probability space and we keep the definition 
$ \bbP_\l := P_\l \times P$, $ \bbP_{0,\l}:= P_{0,\l} \times P$, where now $P$ refers to the enlarged probability space.
 }
 
 \smallskip
 
Given points $x\not =y$ and $x'\not =y'$ \rosso{we define} 
\[
\psi_{x,y}^{x',y'}:=  \begin{cases}
\mathds{1}( \s_{x,y} \leq g(|x'-y'|)) & \text{ in the RC model}\,,\\
\mathds{1}( |x'-y'| + |\s_x|+|\s_y|+|\s_x-\s_y|\leq \z) & \text{ in the MA model}\,.
\end{cases}
\]

We now introduce a new graph $\cG_\e = (\cV_\e, \cE_\e)$ which is (\rosso{as  the graph $\cG$ introduced in Section \ref{solitario}})  a function of  the pair $(\xi, \s)\in \cN \times \Sigma$.
We set 
\[\cV_\e:= \{ x\in \G_\e\,:\, \xi (R^\e_x)=1\}\,\]
while we define 
$\cE_\e$ as 
\begin{equation}\label{salto0}
\cE_{\e}=\{\{x,y\}\,|\,x,y\in \cV_\e\,,\, x\prec_{\rm lex} y
\,,\;
  \Psi _{x,y}^{ x, y }=1\}\,.
  \end{equation}
When the event $A_\e$ (defined at the beginning of the section)  takes place  
 we define a new graph $\cG_{\e}^{\#}=(\cV_{\e},\cE^{\#}_{\e})$ as function of $(\xi ,\s)\in \cN \times \Sigma$ by setting
\begin{equation}\label{salto1}
\cE^{\#}_{\e}=\{\{x,y\}\,|\,x,y\in \cV_\e\,,\, x\prec_{\rm lex} y
\,,\;
  \Psi _{x,y}^{\bar x, \bar y }=1\}\,.
  \end{equation}
Similarly, when the event $A_\e$  takes place,  
 we define a new graph $\cG_{\e}^{*}=(\cV_{\e},\cE^{*}_{\e})$ as function of $(\xi ,\s)\in \cN \times \Sigma$ by setting
\begin{equation}\label{salto2}
\cE^{*}_{\e}=\{\{x,y\}\,|\,x,y\in \cV_\e\,,\, x\prec_{\rm lex} y
\,,\;
  \Psi _{\bar x,\bar y}^{\bar x, \bar y }=1\}\,.
  \end{equation}
\rosso{We note that  $\cG _{\e}^* $ is  the graph with vertex set  $\cV_\e$  and with  edges given by the pairs  $\{x,y\}$ where $x,y $ \rosso{vary} between the sites in $\cV_\e$ with 
 $\{\bar{x}, \bar{y}\} \in \cE$. Roughly, $\cG_{\e}^*$ is the graph obtained from $\cG$ restricted to $\L_{n+1}$ by sliding the vertex at  $\bar x $  (with $x \in \G_\e$) to $x$.  
 }

Finally we observe that 
\begin{multline}\label{nocciolino}
\bbP_\l (A^c_{\e}\cap B_{\e})=\bbP_\l (B_\e \cap\{\exists y\in \G_{\e}\setminus\{0\} \,:\,\xi(R_{y}^{\e})\geq 2\})
\\   \leq  \bbP_\l (B_\e)\sum _{y\in \G_{\e}\setminus\{0\} } \bbP_\l (\xi( R_{y}^{\e})\geq 2\rosso{)}=\bbP_\l (B_\e) O( \e^{d})\,,
\end{multline}
thus implying that $\bbP_\l (A_{\e}^c\,|\,B_{\e})= O(\e^d)$.

\blu{\begin{Lemma}\label{osare}
The event $\{ \bar 0\lr S_n\text{ in }\cG\}$ equals the event  $\{  0\lr S_n\text{ in }\cG_{\e}^*\} $ if $\xi ( {\L}_{n+\e }\setminus {\stackrel{\circ}{\L}} _{n-\e} ) =0 $.
\end{Lemma}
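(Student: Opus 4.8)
The plan is to realise the \emph{sliding map} $x\mapsto\bar x$ (which moves an occupied grid site $x\in\cV_\e$ to the point $\bar x\in\xi$ contained in $R_x^\e$) as a graph isomorphism between $\cG_\e^*$ and the subgraph of $\cG$ induced on $\xi\cap\L_{n+1}$, and then to combine two facts: (a) every edge of $\cG$, hence of $\cG_\e^*$, has Euclidean length $<1$; (b) under the hypothesis $\xi(\L_{n+\e}\setminus\stackrel{\circ}{\L}_{n-\e})=0$ no point of $\xi$ has $\ell^\infty$-norm in $[n-\e,n+\e)$. Using (a) and (b) I would confine any path witnessing $\bar 0\lr S_n$ in $\cG$ to $\L_{n+1}$, where the sliding map is available, and then run the same argument in reverse.

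\textbf{Set-up.} Since $\bar 0$ is defined only on $B_\e$ and $\cG_\e^*$ only on $A_\e$, I work on $A_\e\cap B_\e$; then $0\in\cV_\e$, $\|\bar 0-0\|_\infty<\e$, and in particular $\|\bar 0\|_\infty<\e<1\le n$. By Definition \ref{urla}, as $\|\bar 0\|_\infty\le n$ and $\|0\|_\infty\le n$, the event $\{\bar 0\lr S_n\text{ in }\cG\}$ is the event that the $\cG$-cluster of $\bar 0$ contains a vertex $y$ with $\|y\|_\infty\ge n$, and $\{0\lr S_n\text{ in }\cG_\e^*\}$ is the event that the $\cG_\e^*$-cluster of $0$ contains a vertex $x\in\cV_\e$ with $\|x\|_\infty\ge n$. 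On $A_\e$ the cubes $R_x^\e$, $x\in\G_\e$, are pairwise disjoint and each carries at most one point of $\xi$, so $x\mapsto\bar x$ is a bijection from $\cV_\e$ onto $\xi\cap\L_{n+1}$; comparing \eqref{salto2} with the definition of $\cE$ in Section \ref{solitario}, for any $x,y\in\cV_\e$ one has $\{x,y\}\in\cE_\e^*$ iff $\{\bar x,\bar y\}\in\cE$, so this bijection is an isomorphism of $\cG_\e^*$ onto the subgraph of $\cG$ induced on $\xi\cap\L_{n+1}$. I also record $\|\bar x-x\|_\infty<\e$ for every $x\in\cV_\e$.

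\textbf{The two inclusions.} If $0\lr S_n$ in $\cG_\e^*$, I take a $\cG_\e^*$-path $0=x_0,x_1,\dots,x_k$ with $\|x_k\|_\infty\ge n$; the isomorphism yields the $\cG$-path $\bar 0,\bar{x_1},\dots,\bar{x_k}$, and $\|\bar{x_k}\|_\infty>\|x_k\|_\infty-\e\ge n-\e$, whence (b), applied to $\bar{x_k}\in\xi$, forces $\|\bar{x_k}\|_\infty\ge n+\e$, i.e.\ $\bar 0\lr S_n$ in $\cG$. Conversely, if $\bar 0\lr S_n$ in $\cG$, I pick a $\cG$-path from $\bar 0$ to the first vertex $y$ with $\|y\|_\infty\ge n$, say $\bar 0=z_0,z_1,\dots,z_{k-1},z_k=y$ (so $k\ge1$); for $i<k$ one has $\|z_i\|_\infty<n$, hence $\|z_i\|_\infty<n-\e$ by (b), while $\|y\|_\infty\ge n$, hence $\|y\|_\infty\ge n+\e$. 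By (a) the last edge has $\ell^\infty$-length $<1$, so $\|y\|_\infty<\|z_{k-1}\|_\infty+1<n+1-\e$; thus every $z_i$ has $\ell^\infty$-norm $<n+1-\e$, which (using $\L_{n+1}=[-(n+1),n+1)^d$) forces the $\e$-cube containing $z_i$ to be some $R_{x_i}^\e$ with $x_i\in\G_\e$, and then $A_\e$ gives $x_i\in\cV_\e$ and $\bar{x_i}=z_i$. Running the isomorphism backwards, $0=x_0,x_1,\dots,x_k$ is a $\cG_\e^*$-path, and since $z_k=y\in R_{x_k}^\e$ we get $\|x_k\|_\infty>\|y\|_\infty-\e\ge n$; hence $0\lr S_n$ in $\cG_\e^*$. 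This gives the claimed equality of events on $A_\e\cap B_\e$ under the hypothesis.

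\textbf{Main obstacle.} The one point requiring care is the geometric bookkeeping in the converse inclusion: using the half-open structure of the boxes $\L_k$ and $R_x^\e$ and the $\e$ of slack produced by (b) (which is exactly what turns $\|z_{k-1}\|_\infty<n$ into $\|z_{k-1}\|_\infty<n-\e$), one must verify that the whole truncated path, last vertex included, lies in $\L_{n+1}$, so that each of its vertices belongs to the image of the sliding map. Once this is settled, the rest is a direct translation through the isomorphism $x\leftrightarrow\bar x$.
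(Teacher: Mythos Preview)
Your proof is correct and follows essentially the same route as the paper's: both arguments use the sliding correspondence $x\leftrightarrow\bar x$ between $\cG_\e^*$ and the restriction of $\cG$ to $\L_{n+1}$, together with the $\e$--gap hypothesis to push $\ell^\infty$--norms away from $n$, and the edge--length bound to keep the crossing vertex inside $\L_{n+1}$. Your write--up is in fact more explicit than the paper's (you spell out the isomorphism and the half--open bookkeeping that the paper leaves implicit when it writes ``Since $0\lr z$ in $\cG_\e^*$''). One harmless slip: in your item (a) the clause ``hence of $\cG_\e^*$'' is not literally true---an edge $\{x,y\}\in\cE_\e^*$ only satisfies $|x-y|<1+2\sqrt d\,\e$---but you never use (a) for $\cG_\e^*$--edges, so this does not affect the argument.
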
}
\begin{proof}\blu{Let  $\{a,b\}$ be  an edge of $\cG$ with $\|a\|_\infty <n$ and $\|b\|_\infty \geq n$. Since $\xi ( {\L}_{n+\e }\setminus {\stackrel{\circ}{\L}} _{n-\e} ) =0 $ we have $\|a\|_\infty < n-\e$ and $\|b\|_\infty\geq n+\e$. On the other hand, the Euclidean distance between $a$ and $b$ is smaller than $1$, thus implying that $\|a\|_\infty \geq  n+\e-1$ and  
  $\|b\|_\infty<   n+1-\e$. Suppose now that  $ \bar 0\lr S_n$ in $\cG $ and let $a,b\in \xi $ be such that $  \bar 0\lr a$, $\{a,b\}$ is an edge of $\cG$, $\|a\|_\infty <n$ and $\|b\|_\infty \geq n$. 
  As already observed, $n+\e \leq \|b\|_\infty < n+1-\e$, thus implying that $b= \bar{z}$ for some $z\in \G_\e$ and that $n\leq \|z\|_\infty< n+1$. Since $0\lr z$ in $\cG_\e^*$, we conclude that $0\lr S_n$ in  $\cG_\e^*$.}
  
  \blu{ Suppose now that $0\lr S_n$ in  $\cG_\e^*$.  Then there exists $z\in \G_\e$ such that 
    $\|z\|_\infty \geq n$, $0\lr z$ in $\cG_\e^*$. As a consequence, $\bar 0 \lr \bar z$ in $\cG$.
    Since $\xi ({\L}_{n+\e }\setminus {\stackrel{\circ}{\L}} _{n-\e} ) =0 $, it must be  $\bar{z} \in \bar{\L}_{n+\e}^c$, thus implying that $\bar{0}\lr S_n$ in $\cG$.}
      \end{proof}

\begin{Lemma}\label{uffa2} It holds 
\begin{equation}\label{rondine}
\bbP_\l (\bar 0\lr  S_n \,|\,B_{\e})=\bbP_\l (0\lr   S_n\text{ in }\cG_{\e}\,|\,A_\e\cap B_{\e})+o(1)\,.
\end{equation}
\end{Lemma}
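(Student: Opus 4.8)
The plan is to compare the graph $\cG$ restricted to $\L_{n+1}$ with the ``slid'' graph $\cG_\e$, and to show that, after conditioning on $A_\e \cap B_\e$, the two give the same answer to the question ``is the origin (resp. $\bar 0$) connected to $S_n$?'' up to an event of probability $o(1)$. First I would use Lemma \ref{osare} together with \eqref{nuvola1}, \eqref{nuvola2} (more precisely the fact that $\bbP_\l(\xi(\L_{n+\e}\setminus \stackrel{\circ}{\L}_{n-\e})\geq 1 \mid B_\e) = O(\e)$) to replace the event $\{\bar 0 \lr S_n \text{ in } \cG\}$ by $\{0 \lr S_n \text{ in } \cG_\e^*\}$ up to an $o(1)$ error, on the event $A_\e \cap B_\e$; note $\bbP_\l(A_\e^c \mid B_\e) = O(\e^d)$ by \eqref{nocciolino}, so conditioning on $A_\e\cap B_\e$ versus $B_\e$ only costs $o(1)$. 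So it suffices to show
\[
\bbP_\l(0 \lr S_n \text{ in } \cG_\e^* \mid A_\e \cap B_\e) = \bbP_\l(0 \lr S_n \text{ in } \cG_\e \mid A_\e \cap B_\e) + o(1).
\]

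The key point is then to compare $\cG_\e^*$ and $\cG_\e$, which have the same vertex set $\cV_\e$ but different edge sets: an edge $\{x,y\}$ (with $x,y\in\cV_\e$, $x\prec_{\rm lex} y$) belongs to $\cE_\e^*$ iff $\Psi_{\bar x,\bar y}^{\bar x,\bar y}=1$ and to $\cE_\e$ iff $\Psi_{x,y}^{x,y}=1$. For pairs with $|x-y|$ bounded away from the relevant threshold (i.e. $|x-y|$ not within, say, $2\sqrt d\,\e$ of the discontinuity set of $f$, where $f=g$ or $f=h$), the slide by at most $\sqrt d\,\e$ in each endpoint changes neither $g(|\bar x-\bar y|)$ versus $g(|x-y|)$ nor the MA inequality in a way that flips the indicator, since $\bar x \in R_x^\e$, $\bar y\in R_y^\e$ and the relevant functions are uniformly continuous off finitely many points (here is where goodness of $f$ enters). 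Thus $\cE_\e$ and $\cE_\e^*$ can differ only on ``borderline'' pairs $\{x,y\}$ whose distance $|x-y|$ lands in an $O(\e)$-neighbourhood of one of the finitely many jump points $r_1,\dots,r_m$ of $f$; and for such pairs the discrepancy additionally requires $x,y \in \cV_\e$, i.e. $\xi(R_x^\e)=\xi(R_y^\e)=1$. I would estimate the probability that there exists such a discrepant edge inside a path realizing $0 \lr S_n$: the number of pairs $\{x,y\}\subset \G_\e$ with $|x-y|$ within $O(\e)$ of a fixed $r_j$ and both $R_x^\e,R_y^\e$ occupied has expectation $O(\e^{-d})\cdot O(\e^{-(d-1)})\cdot O(\e^d)\cdot O(\e^d)$ — wait, more carefully: $O(\e^{-d})$ choices of $x$, $O(\e^{-(d-1)})$ choices of $y$ with $|x-y|$ in an $O(\e)$-shell, each occupied with probability $p_\l(\e)=O(\e^d)$, giving expectation $O(\e^{-d})\cdot O(\e^{-(d-1)})\cdot O(\e^{2d}) = O(\e^{d-1}) = o(1)$; since we need such a bad edge to be incident to the cluster of the origin, which forces $x$ (or $y$) to lie in $R^\e$-occupied cells connected to $0$, one gets an extra factor making it genuinely small. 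Either way, the union bound shows that with probability $1-o(1)$ (conditionally on $A_\e\cap B_\e$) no edge incident to the origin's cluster is discrepant, hence $0\lr S_n$ in $\cG_\e^*$ iff $0\lr S_n$ in $\cG_\e$.

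Putting these together: $\bbP_\l(\bar 0 \lr S_n\mid B_\e)$ equals $\bbP_\l(0\lr S_n\text{ in }\cG_\e^*\mid A_\e\cap B_\e)+o(1)$ equals $\bbP_\l(0\lr S_n\text{ in }\cG_\e\mid A_\e\cap B_\e)+o(1)$, which is \eqref{rondine}. The main obstacle I anticipate is the bookkeeping in the discrepancy estimate: one must handle the finitely many jump points $r_j$ of the good function $f$ uniformly, control the counting of near-threshold pairs in $\G_\e$ (the $O(\e^{-(d-1)})$ count of lattice points in an $O(\e)$-thick spherical shell of fixed radius), and — crucially — make sure the bad-edge bound is restricted to edges that could actually affect connectivity of the origin, so that the estimate is genuinely $o(1)$ uniformly in $\l\in[\d,\d^{-1}]$ and is not spoiled by edges far from the origin. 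A secondary subtlety is that in the RC model the randomness $\s_{x,y}$ attached to a slid edge is the \emph{same} variable used for both $\Psi_{\bar x,\bar y}^{\bar x,\bar y}$ and $\Psi_{x,y}^{x,y}$ (by the enlargement of $(\Sigma,P)$ described just before the lemma), so the two indicators are coupled on the same uniform variable and differ only when that variable falls between $g(|x-y|)$ and $g(|\bar x-\bar y|)$, which has probability $|g(|x-y|)-g(|\bar x-\bar y|)|$; this must be shown to be $o(1)$ summably, again using uniform continuity of $g$ away from its jumps.
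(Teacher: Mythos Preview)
Your overall plan matches the paper's: condition on $A_\e\cap B_\e$, use Lemma~\ref{osare} to pass from $\{\bar 0\lr S_n\text{ in }\cG\}$ to $\{0\lr S_n\text{ in }\cG_\e^*\}$, and then compare with $\cG_\e$. The gap is in the last comparison. You assert that ``the randomness $\s_{x,y}$ attached to a slid edge is the \emph{same} variable used for both $\Psi_{\bar x,\bar y}^{\bar x,\bar y}$ and $\Psi_{x,y}^{x,y}$''. This is false. In the enlarged space $(\Sigma,P)$, the variables $\s_{x,y}$ indexed by grid points $x,y\in\G_\e$ are \emph{added} as fresh independent uniforms, independent of the original variables $\s_{(n_1,k_1),(n_2,k_2)}$; and $\s_{\bar x,\bar y}$ (defined via the binary cubes of the Poisson points $\bar x,\bar y$) is one of the latter. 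Thus $\Psi_{\bar x,\bar y}^{\bar x,\bar y}=\mathds{1}(\s_{\bar x,\bar y}\le g(|\bar x-\bar y|))$ and $\Psi_{x,y}^{x,y}=\mathds{1}(\s_{x,y}\le g(|x-y|))$ use \emph{different, independent} uniforms. Consequently, for a generic pair with $g(|x-y|)$ bounded away from $0$ and $1$, the probability that the two indicators disagree is of order $1$, not $|g(|x-y|)-g(|\bar x-\bar y|)|$; your union bound over non-borderline pairs then gives $O(1)$, not $o(1)$. The same issue arises in the MA model, where $\s_{\bar x}$ and $\s_x$ are independent $\nu$-variables (and moreover a single $\s_{\bar x}$ is shared by all edges at $\bar x$, so one cannot swap marks edge by edge).

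The paper repairs this with a step you are missing: it inserts the intermediate graph $\cG_\e^{\#}$, whose edges are $\{x,y\}$ with $\Psi_{x,y}^{\bar x,\bar y}=1$, i.e.\ grid $\s$-variables but Poisson distances. Since the $\s$-field is i.i.d.\ given $\xi$ and the events $A_\e,B_\e$ are $\xi$-measurable, relabelling the $\s$-variables from $\bar x$ to $x$ shows that $\cG_\e^*$ and $\cG_\e^{\#}$ have the \emph{same law} under $\bbP_\l(\cdot\mid A_\e\cap B_\e)$; hence $\bbP_\l(0\lr S_n\text{ in }\cG_\e^*\mid A_\e\cap B_\e)=\bbP_\l(0\lr S_n\text{ in }\cG_\e^{\#}\mid A_\e\cap B_\e)$ exactly. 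Only then does one compare $\cG_\e^{\#}$ with $\cG_\e$ pathwise: now both use the same $\s_{x,y}$ (resp.\ $\s_x,\s_y$), and the discrepancy on a pair is exactly $|g(|x-y|)-g(|\bar x-\bar y|)|$ (resp.\ $|h(|x-y|)-h(|\bar x-\bar y|)|$), which is what Lemma~\ref{uffa3} sums using goodness of $g$ (resp.\ $h$). Your borderline/non-borderline bookkeeping is essentially the content of Lemma~\ref{uffa3}, but it applies to $\cG_\e^{\#}$ versus $\cG_\e$, not to $\cG_\e^*$ versus $\cG_\e$. (A minor aside: your shell count gives $O(\e^{-d})\cdot O(\e^{-(d-1)})\cdot O(\e^{2d})=O(\e)$, not $O(\e^{d-1})$.)
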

\begin{proof}
Since $\bbP_\l (A_{\e}^c\,|\,B_{\e})= O(\e^d)$ we can write 
\begin{align}
& \bbP_\l (\bar 0\lr S_n \,|\,B_{\e}) = 
\bbP_\l (  \bar 0\lr S_n\, |\,  A_\e \cap B_{\e}) +O(\e^{d})\,.\label{57} 
\end{align}
From now on we suppose the event $A_\e \cap B_{\e}$ to take place. 
\blu{We want to apply Lemma \ref{osare}. By independence, $ \bbP_\l \bigl( \{\xi ({ \L}_{n+\e }\setminus {\stackrel{\circ}{\L}} _{n-\e} ) \geq 1 \}\cap B_{\e} \bigr)= O (\e^{d+1})$,  while $\bbP_\l(A_\e\cap B_\e)\geq C \e^d$ by \eqref{nocciolino}. As a consequence,
 \[ \bbP_\l \bigl( \xi ( {\L}_{n+\e }\setminus {\stackrel{\circ}{\L}} _{n-\e} )= 0\,|\, A_\e \cap B_\e) =1 +o(1)\,.
 \]
 By the above observation and Lemma \ref{osare}, in the r.h.s. of \eqref{57} we can replace the event $\{ \bar 0\lr S_n\text{ in }\cG\}$ with the event $\{  0\lr S_n\text{ in }\cG_{\e}^*\} $ with an error $o(1)$. In particular} to get \eqref{rondine} it is enough to show 
that 
\begin{equation}\label{upupa}
\bbP_\l (0\lr   S_n\text{ in }\cG_{\e}\,|\,A_\e\cap B_{\e})= \bbP_\l (0\lr   S_n\text{ in }\cG_{\e}^*\,|\,A_\e\cap B_{\e})+o(1)\,.
\end{equation}

Since the events $A_\e,B_\e$ do  not depend on $\s$, and since the random variables of $\s$--type  are i.i.d. w.r.t. $\bbP_\l$ \rosso{conditioned to  $\xi$},   we conclude that $\cG_{\e}^*$  and $\cG_{\e}^{\#}$ have the same law under 
$\bbP_\l ( \cdot |A_\e\cap B_\e)$. Hence, in order to prove \eqref{upupa} it is enough to show that
\begin{equation}\label{upupabis}
\bbP_\l (0\lr   S_n\text{ in }\cG_{\e}\,|\,A_\e\cap B_{\e})= \bbP_\l (0\lr   S_n\text{ in }\cG_{\e}^{\#}\,|\,A_\e\cap B_{\e})+o(1)\,.
\end{equation}
Trivially, \eqref{upupabis} follows from Lemma \ref{uffa3} below. The result stated in Lemma \ref{uffa3} is stronger than what we need here (we do not need the term $ \xi ( \L_{\rosso{n+2}} )$ in the expectation), and it is suited for a further application in the next section.
\end{proof}
\begin{Lemma}\label{uffa3} It holds
$\bbE_\l \bigl[ \xi ( \L_{\rosso{n+2}} ) \mathds{1}(\cG_{\e}^{\#} \not = \cG_\e)\,|\,A_\e\cap B_{\e}\bigr]=o(1)$.
\end{Lemma}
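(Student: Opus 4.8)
The plan is to show that the event $\{\cG_\e^\# \neq \cG_\e\}$, on $A_\e \cap B_\e$, forces some pair of nearby points in $\cV_\e$ to have its connection status flipped by sliding one endpoint from $x$ to $\bar x$, and that such a flip has conditional probability $O(\e)$ per pair; combined with the fact that there are only $O(\e^{-2d})$ relevant pairs inside $\L_{n+2}$ and that $\xi(\L_{n+2})$ has all moments under $\bbP_\l(\cdot \mid A_\e \cap B_\e)$, this gives $o(1)$.

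More precisely, first I would note that $\cG_\e$ and $\cG_\e^\#$ have the same vertex set $\cV_\e$, and for a pair $\{x,y\}$ with $x,y \in \cV_\e$, $x \prec_{\rm lex} y$, the edge status differs iff $\Psi_{x,y}^{x,y} \neq \Psi_{x,y}^{\bar x,\bar y}$. Since $|x - \bar x| \le \sqrt d\,\e$ and $|y - \bar y| \le \sqrt d\,\e$, we have $\bigl||x-y| - |\bar x - \bar y|\bigr| \le 2\sqrt d\,\e$. In the RC model the discrepancy event is $\{g(|x-y|)\wedge g(|\bar x - \bar y|) < \s_{x,y} \le g(|x-y|)\vee g(|\bar x-\bar y|)\}$; conditionally on $\xi$ (hence on $x,\bar x, y, \bar y$) this has probability $|g(|x-y|) - g(|\bar x - \bar y|)|$. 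Here the goodness of $g$ enters: $g$ is uniformly continuous on each of the finitely many intervals $(r_i,r_{i+1})$, so $|g(u)-g(v)| \le \omega(2\sqrt d\,\e) =: o(1)$ provided $|u-v| \le 2\sqrt d \e$ and $u,v$ lie in a common interval of continuity; the bad case where $u,v$ straddle one of the finitely many breakpoints $r_i$ contributes only when $|x-y|$ lies within $2\sqrt d\,\e$ of some $r_i$, an event of $\xi$-conditional... rather, a geometric constraint cutting the number of such pairs $\{x,y\}$ at mutual distance within $O(\e)$ of a fixed radius. In the MA model the same reasoning applies with $h$ in place of $g$: the discrepancy event involves $\{|x-y| + \cdots \le \z < |\bar x - \bar y| + \cdots\}$ or the reverse, and conditioning on $\xi$ and integrating over $\s_x,\s_y$ gives a probability bounded by $|h(|x-y|) - h(|\bar x - \bar y|)|$ (using the definition \eqref{sereno} of $h$), which is again controlled by goodness of $h$.

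Then I would organize the sum: on $A_\e \cap B_\e$,
\[
\mathds{1}(\cG_\e^\# \neq \cG_\e) \le \sum_{\{x,y\} \subset \cV_\e,\ |x-y| \le 2}\mathds{1}\bigl(\Psi_{x,y}^{x,y} \neq \Psi_{x,y}^{\bar x,\bar y}\bigr),
\]
where I have used that all edges of $\cG_\e$ and $\cG_\e^\#$ have length $< 1$ (assuming $\z < 1$, $g$ supported in $[0,1]$, as in the Warning), so only pairs with $|x-y|$ bounded matter. Multiplying by $\xi(\L_{n+2})$ and taking $\bbE_\l[\cdot \mid A_\e \cap B_\e]$, I split the sum over pairs both inside $\L_{n+2}$ (there are at most $\xi(\L_{n+2})^2$ ordered such pairs; after multiplying we get $\xi(\L_{n+2})^3$ — a random variable with all moments bounded uniformly in small $\e$ under the conditioned measure, since conditioning on $A_\e \cap B_\e$ changes the law of $\xi$ restricted to $\L_{n+2}$ only through a local event of probability $\asymp \e^d$ and a forced point near $0$) and pairs with at least one endpoint outside $\L_{n+2}$, which cannot contribute to a flip affecting a vertex inside $\L_{n+2}$ but in any case are handled by the same length bound. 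For each fixed pair the conditional flip probability is $o(1)$ by the previous paragraph, and $o(1)$ can be taken uniform over the (boundedly many relative positions of) pairs; the contribution of pairs whose distance is within $O(\e)$ of a breakpoint of $g$ (or $h$) is handled separately by noting there are only $O(\e^{-d})$ grid points within $O(\e)$ of the sphere of a given radius around a fixed grid point, so after summing and multiplying by $\e^d$-order conditioning one still gets $o(1)$. Using the Cauchy–Schwarz / Hölder inequality to decouple $\xi(\L_{n+2})$ (or its powers) from the indicator — whose conditional probability given $\xi$ is $o(1)$ pointwise — yields $\bbE_\l[\xi(\L_{n+2})\mathds{1}(\cG_\e^\# \neq \cG_\e)\mid A_\e\cap B_\e] = o(1)$.

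The main obstacle I expect is the bookkeeping around the finitely many breakpoints $r_1,\dots,r_m$ of the good function $f \in \{g,h\}$: uniform continuity only holds on each open subinterval, so for pairs $\{x,y\}$ with $|x-y|$ within $O(\e)$ of some $r_i$ the bound $|f(|x-y|)-f(|\bar x-\bar y|)| = o(1)$ fails and one must instead use the crude bound $f \le 1$ together with a geometric count showing such pairs are rare (a single grid point has only $O(\e^{-d}\cdot \e) = o(\e^{-d})$... actually $O(\e^{-d})$ — one must be careful here) partners at distance in a fixed $O(\e)$-window of a fixed radius, so that the total expected number of such bad pairs inside $\L_{n+2}$, times the conditioning cost, is $o(1)$. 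Making this count clean, and confirming that the conditioned law of $\xi \restriction \L_{n+2}$ genuinely has uniformly (in $\e$) bounded high moments, are the two points requiring the most care; everything else is the routine decoupling argument above.
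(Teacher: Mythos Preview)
Your outline is essentially the paper's: union--bound over pairs, bound the flip probability by $|f(|x-y|)-f(|\bar x-\bar y|)|$ with $f\in\{g,h\}$, control this by the modulus of continuity of $f$ on each subinterval $(r_{i-1},r_i)$, and treat pairs whose distance straddles a breakpoint $r_i$ by a geometric count. The one structural difference is how you decouple the factor $\xi(\L_{n+2})$. You propose Cauchy--Schwarz/H\"older against moment bounds for $\xi(\L_{n+2})$ under $\bbP_\l(\cdot\mid A_\e\cap B_\e)$; the paper instead sums over \emph{grid} pairs $x\prec_{\rm lex}y$ in $\G_\e$ (not over the random $\cV_\e$), replaces $\xi(\L_{n+2})$ by $Z:=\xi\bigl(\L_{n+2}\setminus(R_x^\e\cup R_y^\e\cup R_0^\e)\bigr)+3$, and uses that under $\bbP_\l(\cdot\mid A_\e\cap B_\e)$ the four quantities $Z$, $\varphi_x^\e$, $\varphi_y^\e$, $\mathds{1}(\Psi_{x,y}^{\bar x,\bar y}\neq\Psi_{x,y}^{x,y})$ are \emph{independent}. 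This factorizes the expectation directly into $O(1)\cdot p_\l(\e)^{2-\d_{0,x}}\cdot|f(|x-y|)-f(|\bar x-\bar y|)|$, and the rest is the same goodness-and-breakpoint analysis you describe. Your route works too, but the independence argument is cleaner and avoids the detour through $\xi(\L_{n+2})^3$ and higher moments.

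Two places where your write-up is loose and should be tightened. First, the assertion that the conditional probability of $\{\cG_\e^\#\neq\cG_\e\}$ given $\xi$ is ``$o(1)$ pointwise'' is not correct as stated: for a realization of $\xi$ with many occupied pairs at mutual distance near some $r_i$ that conditional probability need not be small. What \emph{is} $o(1)$ is the unconditional probability $\bbP_\l(\cG_\e^\#\neq\cG_\e\mid A_\e\cap B_\e)$, and that is all Cauchy--Schwarz needs; alternatively, switch to the paper's independence argument and the issue disappears. Second, your breakpoint count: your first instinct was right. Fixing $x\in\G_\e$, the number of $y\in\G_\e$ with $\bigl||x-y|-r_i\bigr|\le 2\sqrt d\,\e$ is $O(\e^{-d+1})$ (an annulus of width $O(\e)$ has volume $O(\e)$, so contains $O(\e^{-d+1})$ grid points). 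Summing over $x$ gives $O(\e^{-2d+1})$ such pairs with $x\neq 0$ and $O(\e^{-d+1})$ with $x=0$; multiplying by $p_\l(\e)^2=O(\e^{2d})$ and $p_\l(\e)=O(\e^d)$ respectively yields $O(\e)=o(1)$, exactly as needed.
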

\begin{proof} Recall definition \eqref{pioggia}.
Since the  graphs $\cG_{\e}$ and $\cG_{\e}^{\#}$ have the same vertex set $\cV_\e$,  by an union bound we can estimate 
\begin{equation}\label{natale}
\begin{split}
&
\bbE_\l \bigl[ \xi ( \L_{\rosso{n+2}} ) \mathds{1}(\cG_{\e}^{\#} \not = \cG_\e)\,|\,A_\e\cap B_{\e}\bigr]
\\
& \leq  \sum _{ x\prec_{\rm lex} y \text{ in } \G_\e} \bbE_\l \bigl[  \xi ( \L_{\rosso{n+2}} ) \mathds{1}\bigl( \varphi_x^\e=1\,,\; \varphi_y^\e=1\,, \varphi^\e_0=1\,,\;  \Psi _{x,y}^{\bar x, \bar y}\not = \Psi _{x,y}^{x,y }\bigr)\,|\, A_\e\cap B_\e
\bigr]\,.
\end{split}
\end{equation}
Note that  $\xi ( \L_{\rosso{n+2}} ) \leq \xi  \bigl( \L_{\rosso{n+2}} \setminus ( R_x^\e\cup R_y^\e\cup R_0^\e)\bigr )+3=:Z$ whenever $ \varphi_x^\e= \varphi_y^\e=\varphi_0^\e=1$.
 We  also observe that, under  $\bbP_\l\bigl(\cdot \,|\, A_\e\cap B_\e \bigr)$, the random variables 
$  Z$, $ \varphi_x^\e$,
$ \varphi_y^\e$, $\mathds{1}\bigr( \Psi _{x,y}^{\bar x, \bar y}\not = \Psi _{x,y}^{x,y }\bigr)$ are independent.
As a consequence, \eqref{natale} implies that 
\begin{equation}\label{epifania}
\begin{split}
& \bbE_\l \bigl[ \xi ( \L_{\rosso{n+2}} ) \mathds{1}(\cG_{\e}^{\#} \not = \cG_\e)\,|\,A_\e\cap B_{\e}\bigr]
\leq  c\,   \sum _{ x\in \G_\e} \sum_{ y \in  \G_\e  \setminus\{0,x\} }
 p_\l(\e)^{2-\d_{0,x}}
P \bigl(\Psi _{x,y}^{\bar x, \bar y}\not = \Psi _{x,y}^{x,y } \bigr)  \,,
\end{split}
\end{equation}
where $\d_{0,x}$ denotes the Kronecker delta.

Above we have used   the definition of $p_\l(\e)$ given in \eqref{pierpi},  the fact
  that 
 \begin{equation}
 \bbP_\l \bigl(\varphi_z^\e=1\,|\, A_\e\cap B_\e
\bigl) 
= \begin{cases}1 & \text{ if }  z =0\,,\\ p_\l(\e)  & \text{ if } z\in \G_\e\setminus\{0\}\,,
\end{cases}
\end{equation}
and the estimate (recall that $\bbP_\l (A_\e^c|B_\e)=o(1)$)
\[ \bbE_\l\bigl[ Z\,|\, A_\e\cap B_\e\bigr]\leq  \frac{\bbE_\l\bigl[ Z \mathds{1}( B_\e)\bigr] }{\bbP_\l (A_\e\cap B_\e)}= 
 \bbE_\l\bigl[ Z\bigr]   \frac{ \bbP_\l ( B_\e)}{\bbP_\l (A_\e\cap B_\e)}= \frac{\bbE_\l\bigl[ Z\bigr] }{\bbP_\l (A_\e|B_\e)}=O(1)\,.
\]

From now on we distinguish between the RC model and the MA model.

\smallskip

\noindent
$\bullet$  We consider  the RC model.  Recall that the  \rosso{connection} function $g$ is good. To simplify the notation 
 we restrict  to   $m=2$ and  $r_2=1$ in Definition \ref{dea} (the general case is similar).
For $i=1,\,2$, we set 
\[
 \o_i ( \d) := \sup \{ |g(a) -g(b)|\,:\, a, b \in (r_{i-1},r_i) \text{ and 
} |a-b| \leq \d\}\,.\]
Since $g$ is uniformly continuous in $(r_{i-1},r_{i})$ we know that
  $\o_i(\d)\rgh0$ when $\d\rgh 0$, for $i=1,\,2$. 
Since $g$ has support inside $(0,1)$,  $P \bigl(\Psi _{x,y}^{\bar x, \bar y}\not = \Psi _{x,y}^{x,y } \bigr) =0$ if $|x-y|\geq 1$ and $|\bar x-\bar y|\geq 1$. By taking $\e$ small, this always happens if $|x-y|\geq 2$. Hence, in the sum inside \eqref{natale} we can restrict to $x,y$ with $|x-y|<2$.

As a consequence we have \begin{equation} \label{lip1}
\begin{split}
\text{r.h.s. of \eqref{epifania}}
&\leq c \sum_
{x\in \G_\e
}\sum_{
\substack{
y\in \G_\e\setminus\{0,x\}:\\
 |x-y|<2}} p_\l(\e)^{2-\d_{0,x}}  |g(|x-y|)-g(|\bar x-\bar y|)|\,.
\end{split}
\end{equation}

\medskip
It remains to prove  that  the r.h.s. of \eqref{lip1} is $o(1) $.

Since  $|z-\bar z|<\sqrt{d}\e$ for all $z\in \G_\e$, 
$|x-y|$ differs from $|\bar x-\bar y|$ by at most $2 \sqrt{d}{\e}$.
We set  $M_{x,y}:=\max\{|x-y|,|\bar x-\bar y|\}$ and $m_{x,y}:=\min\{|x-y|,|\bar x-\bar y|\}$.  Note that $m_{x,y}>0$.
Since $g$ has support inside $(0,1)$, in \eqref{lip1} we can restrict to the case $m_{x,y}<r_2=1$.   
Moreover, if we consider the following cases:
\begin{itemize}
\item[(i)] $0=r_0< m_{x,y}< M_{x,y}<  r_1$,
\item[(ii)] $r_1< m_{x,y}<M_{x,y}< r_2=1$,
\end{itemize}  
we can bound
\[  |g(|x-y|)-g(|\bar x-\bar y|)| \leq \o_1 ( 2\sqrt{d}\e ) \text{ in the case (i)},\]
\[  |g(|x-y|)-g(|\bar x-\bar y|)| \leq \o_2 ( 2\sqrt{d}\e ) \text{ in the case (ii)}.\]
As a consequence the contribution in the r.h.s. of \eqref{lip1} of the pairs $x,y$ with $x\not =0$ and $M_{x,y}, \,m_{x,y}$ which satisfy case (i) or (ii), is bounded by $ c\e^{-2d} p_\l (\e)^2 \bigl( \o_1 ( 2\sqrt{d}\e )+\o_2 ( 2\sqrt{d}\e )\bigr)= O\bigl( \o_1 ( 2\sqrt{d}\e )+\o_2 ( 2\sqrt{d}\e )\bigr)=o(1)$. Similarly the  contribution in the r.h.s. of \eqref{lip1} of the pairs $x=0,y$ with  $M_{x,y}, \,m_{x,y}$ which \rosso{satisfy} case (i) or (ii), is $O\bigl( \o_1 ( 2\sqrt{d}\e )+\o_2 ( 2\sqrt{d}\e )\bigr) =o(1)$. 
\\
The other pairs $x,y$ in \eqref{lip1} we have not considered yet satisfy (a) $m_{x,y}\leq r_1\leq M_{x,y}$ or (b) $m_{x,y} \leq r_2\leq M_{x,y}$. 
Defining $r:=r_1$ in case (a) and $r:=r_2$ in case (b), we can restrict to study the contribution in the r.h.s. of \eqref{lip1} of the pairs $x,y$ which satisfy $m_{x,y}\leq r\leq M_{x,y}$. We now estimate such contribution.  Since 
$m_{x,y} \geq |x-y| - 2\sqrt{d}{\e}$ and $M_{x,y} \leq  |x-y| + 2\sqrt{d}{\e}$, it must be 
\begin{equation}\label{tg2}
  r-  2\sqrt{d}{\e} \leq |x-y|  \leq r+ 2\sqrt{d}{\e}\,.\end{equation}
The number of points $y\in \G_\e$ satisfying \eqref{tg2} are of order $O(\e^{-d+1})$, hence the pairs $x,y$ with $m_{x,y} \leq  r\leq M_{x,y}$ and $x\not =0$ are of order $O(\e^{-2d+1})$ while the pairs $x,y$ with $m_{x,y} \leq r\leq M_{x,y}$ and $x=0$ are $O(\e^{-d+1})$. Bounding in both cases 
$|g(|x-y|)-g(|\bar x-\bar y|)|
$ by $1$, we conclude that  the contribution in  the r.h.s. of \eqref{lip1} of the pairs $x,y$ with $m_{x,y}\leq r\leq M_{x,y}$ is 
bounded by $ O(\e^{-2d+1}) p_\l(\e)^2 + O(\e^{-d+1})p_\l(\e) = o(1)$.

\medskip

\noindent
$\bullet$  We consider  the MA model. 
As for \eqref{lip1} we have 
\begin{equation} \label{salina100}
\begin{split}
\text{r.h.s. \eqref{epifania}} 
\leq \sum_
{x\in \G_\e
}\sum_{
\substack{
y\in \G_\e\setminus\{0\}\,,\\
 |x-y|<2}}    p_\l(\e)^{2-\d_{0,x} }  \bigl|  h(|x-y|)- h( |\bar x-\bar y| ) \bigr|\,.
\end{split}
\end{equation}
Since by assumption $h$ is good,  one can proceed exactly as done for the RC model and conclude that  the r.h.s. of \eqref{salina100} is \rosso{of} order $o(1)$.
\end{proof}

\subsection{Conclusion of the proof of \eqref{uri1} in Proposition \ref{prop_discreto}} By combining  Lemmas
\ref{uffa1} and \ref{uffa2}  we get that $\tilde \theta_n(\l)=\bbP_\l (0\lr   S_n\text{ in }\cG_{\e}\,|\,A_\e\cap B_{\e})+o(1) $. On the other hand, by construction  the  random graph $\cG_\e$ sampled according to $\bbP_\l  (\cdot\,|\,A_\e\cap B_{\e})$ has the same law of the random graph $G_\e$ sampled according to $\bbP_\l ^{(\e)}(\cdot\,|\, \eta^\e_0=1) $. This implies that 
\[  \bbP_\l (0\lr   S_n\text{ in }\cG_{\e}\,|\,A_\e\cap B_{\e}) = \bbP_\l ^{(\e)}(  0\lr   S_n  \,|\, \eta^\e_0=1)  = \tilde \theta _n ^{(\e)}(\l)\,.
\]
This completes the proof of \eqref{uri1} for $n=k$. As stated at the beginning, the choice $n=k$ was to simplify the notation, the proof is the same for general $k$.

\section{Proof of \eqref{uri2} in Proposition \ref{prop_discreto}}\label{sec_discreto2}
We use the same convention on constants $c,c_1, c_2 \dots $,  on  $O(\e^\a)$ and $o(1)$ as \rosso{stated at the beginning of  the previous section}. 
Below we restrict to $n\geq 1$.

\smallskip

Due to  \eqref{rospo} we need to prove that 
\begin{equation}\label{kanan}  \bbE_{0,\l} \bigl[ | \text{Piv}_+ ( 0 \lr  S_n ) | \bigr]\   =   \lim _{\e \downarrow 0} \bbE^{(\e)}_\l \bigl[ \bigl|  {\rm Piv}(0 \lr  S_n ) \bigr| \bigr] \end{equation}
(recall the definition of $ \text{Piv}_+ ( 0 \lr  S_n )$ given after \eqref{rana}).
We define the function $g:\bbR^d \times \cN \to \bbR$ as 
\[ g(x, \xi)=
\mathds{1}(x\in R_0^{\e}) \int _\Sigma
P(d\s) 
\bigl |\, \text{Piv}_+(0\lr  S_n)( \xi, \s) \, \bigr  |
\,.\]
Then, by the property of the Palm distribution and of $P$  (cf. \rosso{\cite[Thm. 12.2.II and Eq. (12.2.4)]{DVJ} and}  Section \ref{solitario}), 
\begin{equation}\label{scintilla}
\begin{split}
\l\e^d\bbE_{0,\l}[|\text{Piv}_+(0\lr  S_n)|]& = \l E_{0,\l}   \Big[ \int _{\bbR^d}\rosso{dx} g(x,\xi)\Big ] = \bbE_\l\Big[\int_{\bbR^d}\xi(dx)g(x,\t_x\xi )\Big]=\\
&=\bbE_\l\Big[\int_{R_0^{\e}}\xi(dx)|\text{Piv}_+(x\lr  S_n(x))|\Big]\,.
\end{split}
\end{equation}
We recall that $S_n(x)=S_n+x$.
We can write  the last member of \eqref{scintilla}  as $\cC_1+\cC_2$, 
with $\cC_1$ and $\cC_2$ defined below. 
We set $N_\e:=\xi(R_0^{\e})$. Then, using independence and that $N_\e$ is a Poisson r.v. with parameter $\l \e^d$, we get 
\begin{equation}
\label{x1}
\begin{split}
\cC_1&:=\bbE_\l \Big[\int_{R_0^{\e}}\xi(dx)|\text{Piv}_+(x\lr  S_n(x))|\mathds{1}(N_\e\geq 2) \Big]\\ &\leq 
\bbE_\l\bigl[\xi(\L_{n+\rosso{2}})\,N_\e\mathds{1}(N_\e\geq 2)\bigr]=\bbE_\l\bigl[(N_\e+\xi(\L_{n+\rosso{2}}\setminus R_0^{\e}))\,N_\e \mathds{1}(N_\e\geq 2)\bigr]
\\ &\leq \bbE_\l\bigl[N_\e^2\mathds{1}(N_\e \geq 2) \bigr]+c_1 \bbE_\l\bigl [N_\e\mathds{1}(N_\e \geq 2) \bigr]\leq c_2\bbE_\l \bigl [N_\e^2 \mathds{1}(N_\e \geq 2) \bigr]
\\ &=c_2(\bbE_\l \bigl[N_\e^2\bigr]-\bbP_\l \bigl[N_\e =1 \bigr])
=c_2(\l\e^d+\l^2\e^{2d}-\l\e^de^{-\l\e^d})=O(\e^{2d}).
\end{split}
\end{equation}
\begin{Remark}\label{vetri}For  the first inequality in \eqref{x1} we point out that, given $x\in R_0^\e\cap \xi $,  the set $\text{Piv}_+(x\lr  S_n(x))$ (referred to  $\cG$) must be  contained in $\xi \cap \L_{n+2}$. Indeed, if we take a path in $\cG$ from $x$ to the complement of $x+(-n,n)^d$ and call $y$ the first vertex of the path outside   $x+(-n,n)^d$, then the euclidean distance between  $y$ and  $x+(-n,n)^d$ is  smaller  than $1$ (recall that all edges in $\cG$ have length smaller than $1$). In particular, we have that $\|y\| _\infty < \|x\|_\infty + n +1 \leq n+2$. As a consequence, to know if   $x\lr  S_n(x)$ in $\cG$ (or in the graph obtained by removing from $\cG$ a vertex $z$ and the  edges containing $z$), it is enough to know the vertexes of $\cG$ inside $\L_{n+2}$ and the edges formed by these vertexes.
\end{Remark}

We now bound the remaining contribution $\cC_2$:
\begin{equation}
\label{x2}
\begin{split}
\cC_2&:=\bbE_\l\Big[\int_{R_0^{\e}}\xi(dx)|\text{Piv}_+(x\lr  S_n(x))|\mathds{1}(N_\e =1)\Big]\\ &
=\bbE_\l \bigl[|\text{Piv}_+(\bar 0\lr  S_n(\bar 0))| \mathds{1}(N_\e =1)  \bigr]=\bbE_\l \bigl [|\text{Piv}_+(\bar 0\lr  S_n(\bar 0))|   \mathds{1}(N_\e =1)   \mathds{1}(A_\e)  \bigr]\\ &
+\bbE_\l \bigl[|\text{Piv}_+(\bar 0\lr  S_n(\bar 0))| \mathds{1}(N_\e =1)   \mathds{1}(A^c_\e)\bigr]\,.
\end{split}
\end{equation}
We note that (see also the computation of $ \bbE_\l \bigl [N_\e^2 \mathds{1}(N_\e \geq 2) \bigr]
$ in \eqref{x1})
\begin{equation}
\label{x3}
\begin{split}
\bbE_\l &  \bigl[|\text{Piv}_+(\bar 0\lr  S_n(\bar 0))|  \mathds{1}(N_\e =1) 
 \mathds{1}(A^c_\e)\bigr]
\leq \bbE_\l\bigl [\xi(\L_{n+\rosso{2}}) \mathds{1}(N_\e =1)   \mathds{1}(A^c_\e)\bigr ]
\\ &\leq\sum_{y\in\G_{\e}\setminus\{0\}}\bbE_\l \bigl[ \xi(\L_{n+\rosso{2}}) \mathds{1}(N_\e =1) 
\mathds{1}\bigl(\xi ( R^{\e}_y) \geq  2\bigr) \bigr] 
\\ &\leq \sum_{y\in\G_{\e}\setminus\{0\}}\bbE_\l \bigl[\xi(\L_{n+\rosso{2}}\setminus(R^{\e}_y\cup R_0^{\e}))\bigr]\bbP_\l \bigl(N_\e =1\bigr)\bbP_\l\bigl( \xi ( R^{\e}_y) \geq 2  \bigr)
\\ &+2\sum_{y\in\G_{\e}\setminus\{0\}}\bbE_\l \bigl[ \xi(R^{\e}_y)\mathds{1}( \xi( R_y^{\e}) \geq 2) \bigr]
\bbP_\l (N_\e=1)\leq \sum_{y\in\G_{\e}\setminus\{0\}}O(\e^{3d})=O(\e^{2d}).
\end{split}
\end{equation}
Since \eqref{scintilla}$=\cC_1+\cC_2$, by \eqref{x1},\eqref{x2} and \eqref{x3}, we get (note that $B_\e=\{N_\e=1\}$)
\begin{equation}
\label{x4}
\bbE_{0,\l}[|\text{Piv}_+(0\lr  S_n)|]=\bbE_\l \bigl[|\text{Piv}_+(\bar 0\lr  S_n(\bar 0))|   \mathds{1}(A_\e\cap B_\e)\bigr] \cdot\frac{1}{\l\e^d}+o(1)\,.
\end{equation}

\rosso{In what follows, given one of our random  graphs on the grid $\G_\e$ as $G_\e$ (cf. Section \ref{poligono}), $\cG_\e$, $\cG_\e^*$ and $\cG_\e^{\#}$ (cf. Section \ref{sec_discreto1}), and given an event $A$ regarding the graph, we call $\text{Piv}_+(A)$ the set of vertexes  $x$ of the  graph for which the following property holds: the event $A$ is realized  by the graph under consideration, but it does not take place when removing from the graph the vertex $x$ and all edges containing the vertex $x$.}

\rosso{\begin{Lemma}\label{fontana}
It holds
\begin{multline}\label{x5} 
\bbE_\l  \bigl[|\text{Piv}_+(\bar 0\lr  S_n(\bar 0))|   \mathds{1}(A_\e\cap B_\e)\bigr] 
= \\
\bbE_\l \bigl[|\text{Piv}_+( 0\lr  S_{n} \text{ in }\blu{\cG_{\e}^{*}}) |   \mathds{1}(A_\e\cap B_\e)\bigr]+o(\e^d)\,.
\end{multline}
\end{Lemma}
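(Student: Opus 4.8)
The plan is to transfer the pivotality statement from the continuum graph $\cG$ (with vertices $\bar{x}$, $x\in\G_\e$, on the event $A_\e\cap B_\e$) to the grid graph $\cG_\e^*$ by the same two-step comparison used to prove \eqref{uri1}: first replace $\{\bar 0\lr S_n(\bar 0)\text{ in }\cG\}$ by $\{0\lr S_n\text{ in }\cG_\e^*\}$ up to a localised boundary error, and then observe that the \emph{sets} of $(+)$-pivotal vertices agree up to the same kind of error. Concretely, on $A_\e\cap B_\e$ the vertex sets of $\cG$ restricted to $\L_{n+2}$ and of $\cG_\e^*$ are in bijection via $x\mapsto\bar x$ (each $R_x^\e$, $x\in\G_\e$, contains at most one point of $\xi$), and by Remark~\ref{vetri} both $\text{Piv}_+(\bar 0\lr S_n(\bar 0))$ and $\text{Piv}_+(0\lr S_n\text{ in }\cG_\e^*)$ are contained in $\L_{n+2}$, so it suffices to compare the two events after removing an arbitrary vertex $\bar z$ (resp. $z$), uniformly in $z\in\G_\e\cap\L_{n+2}$.

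First I would record the quantitative ingredients already available: $\bbP_\l(A_\e^c\mid B_\e)=O(\e^d)$ from \eqref{nocciolino}; $\bbP_\l\bigl(\{\xi(\L_{n+\e}\setminus\stackrel{\circ}{\L}_{n-\e})\ge1\}\cap B_\e\bigr)=O(\e^{d+1})$ by independence; and $\bbE_\l[\xi(\L_{n+2})\mid A_\e\cap B_\e]=O(1)$. Multiplying out by $\lambda\e^d=\bbP_\l(B_\e)(1+o(1))$, each of these ``bad'' events contributes a term $\bbE_\l[\xi(\L_{n+2})\mathds 1(\text{bad})\mathds 1(B_\e)]=o(\e^d)$ to both sides, using that on the good part $\text{Piv}_+$ is bounded by $\xi(\L_{n+2})$. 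So we may freely condition on $A_\e\cap B_\e\cap\{\xi(\L_{n+\e}\setminus\stackrel{\circ}{\L}_{n-\e})=0\}$. On this event Lemma~\ref{osare} gives $\{\bar 0\lr S_n\text{ in }\cG\}=\{0\lr S_n\text{ in }\cG_\e^*\}$; moreover the same argument, applied to the graph with vertex $\bar z$ (resp. $z$) and its incident edges deleted, shows the two deletion-events coincide as well, because deleting a vertex does not create new boundary edges and the edge-length/$S_n$-argument of Lemma~\ref{osare} is unaffected. Hence, on this conditioning event, $\text{Piv}_+(\bar 0\lr S_n(\bar 0))=\{\bar z:\ z\in\text{Piv}_+(0\lr S_n\text{ in }\cG_\e^*)\}$, so the two cardinalities are \emph{equal} pointwise.

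Finally I would assemble the estimate: writing $P:=\text{Piv}_+(\bar 0\lr S_n(\bar 0))$ and $P^*:=\text{Piv}_+(0\lr S_n\text{ in }\cG_\e^*)$, on $A_\e\cap B_\e$ we have $|P|,|P^*|\le\xi(\L_{n+2})$, and $|P|=|P^*|$ off the event $\mathcal D:=\{\xi(\L_{n+\e}\setminus\stackrel{\circ}{\L}_{n-\e})\ge1\}$, so
\[
\bigl|\bbE_\l[(|P|-|P^*|)\mathds 1(A_\e\cap B_\e)]\bigr|\le 2\,\bbE_\l[\xi(\L_{n+2})\mathds 1(\mathcal D)\mathds 1(B_\e)]
= 2\,\bbE_\l[\xi(\L_{n+2})\mathds 1(\mathcal D\setminus R_0^\e)]\,\bbP_\l(B_\e)+O(\e^{2d})\,,
\]
which is $O(\e^{d})\cdot O(\e)=o(\e^d)$ by independence of $\mathcal D\setminus R_0^\e$ from $B_\e$ and the $O(\e)$ bound on $\bbP_\l(\mathcal D)$; this yields \eqref{x5}.

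The main obstacle I anticipate is the bookkeeping in the ``deletion'' comparison: one must check that the equivalence of the two connection events in Lemma~\ref{osare} survives uniformly when an arbitrary vertex and its incident edges are removed, i.e. that removing a vertex never turns a $\cG$-path crossing $S_n$ into one that is ``seen'' differently by $\cG_\e^*$. This is true because Lemma~\ref{osare}'s proof only uses edge lengths $<1$ and the void region $\L_{n+\e}\setminus\stackrel{\circ}{\L}_{n-\e}$, both of which are monotone under vertex deletion, but it requires stating the deletion version of Lemma~\ref{osare} carefully (applied simultaneously to all $z\in\G_\e\cap\L_{n+2}$), and then invoking Lemma~\ref{uffa3} in the stronger form with the weight $\xi(\L_{n+2})$ — which is precisely why that lemma was proved with the extra factor — to control the residual discrepancy between $\cG_\e^*$ and $\cG_\e^{\#}$ should one wish to route through $\cG_\e^{\#}$ instead.
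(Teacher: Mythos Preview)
Your approach is essentially the same as the paper's: both restrict to the event $\{\xi(\L_{n+\e}\setminus\stackrel{\circ}{\L}_{n-\e})=0\}$, bound the complement by $o(\e^d)$ via the independence splitting you describe (the paper writes it as $\xi(\L_{n+2})\le W+\xi(\L_{n+\e}\setminus\stackrel{\circ}{\L}_{n-\e})$ with $W$ independent of the annulus), and then argue a pointwise bijection of $(+)$--pivotal sets via the path correspondence underlying Lemma~\ref{osare}. Two small clean-ups: you should explicitly note that on the good event $\{\bar 0\lr S_n(\bar 0)\}=\{\bar 0\lr S_n\}$ (since $S_n(\bar 0)\subset\L_{n+\e}\setminus\stackrel{\circ}{\L}_{n-\e}$), which is the missing link between your target and Lemma~\ref{osare}; and Lemma~\ref{uffa3} plays no role in \emph{this} lemma (the comparison is with $\cG_\e^*$, not $\cG_\e$ or $\cG_\e^{\#}$) --- it is only used afterwards, so you can drop that caveat.
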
}
\begin{proof}
\rosso{We can bound
\begin{equation}
\begin{split}
& \bbE_\l  \bigl[|\text{Piv}_+(\bar 0\lr  S_n(\bar 0))|   \mathds{1}(A_\e\cap B_\e)\mathds{1}
\bigl(
\xi ( \L_{n+\e }\setminus {\stackrel{\circ}{\L}} _{n-\e} ) \geq 1
\bigr)
\bigr] \\
&\leq
\bbE_\l  \bigl[ \xi(\L_{n+2})  \mathds{1}(A_\e\cap B_\e)\mathds{1}
\bigl(
\xi ( \L_{n+\e }\setminus {\stackrel{\circ}{\L}} _{n-\e} ) \geq 1
\bigr)
\bigr]\\
&\leq  \bbE_\l  \bigl[ W   \mathds{1}( B_\e)\mathds{1}
\bigl(
\xi ( \L_{n+\e }\setminus {\stackrel{\circ}{\L}} _{n-\e} ) \geq 1
\bigr)
\bigr]+\bbE_\l  \bigl[\xi ( \L_{n+\e }\setminus {\stackrel{\circ}{\L}} _{n-\e} )   \mathds{1}( B_\e)\bigr]\\
& \leq c\, \bbP_\l\bigl (B_\e\bigr) \bbP_\l \bigl( \xi ( \L_{n+\e }\setminus {\stackrel{\circ}{\L}} _{n-\e} ) \geq 1 
\bigr)
+\bbE_\l  \bigl[\xi ( \L_{n+\e }\setminus {\stackrel{\circ}{\L}} _{n-\e} ) 
\bigr]\bbP_\l(B_\e)\\
&=O( \e^{d+1})=o(\e^d)\,,
\end{split}
\end{equation}
where $W:= \xi(\L_{n+2}\setminus \L_{n+\e } )+\xi ( {\stackrel{\circ}{\L}} _{n-\e}\setminus R_0^\e)+1 $ (note that the third inequality follows from the independence property of the Poisson point process).
As a consequence, \eqref{x5} follows by observing that 
\begin{multline}\label{x54} 
\bbE_\l  \bigl[|\text{Piv}_+(\bar 0\lr  S_n(\bar 0))|   \mathds{1}(A_\e\cap B_\e)\mathds{1}
\bigl(
\xi ( \L_{n+\e }\setminus {\stackrel{\circ}{\L}} _{n-\e} ) =0 
\bigr)
\bigr] 
= \\
\bbE_\l \bigl[|\text{Piv}_+( 0\lr  S_{n} \text{ in }\blu{\cG_{\e}^{*}}) |   \mathds{1}(A_\e\cap B_\e)\mathds{1}
\bigl(
\xi ( \L_{n+\e }\setminus {\stackrel{\circ}{\L}} _{n-\e} ) =0 
\bigr)
 \bigr]\,.
\end{multline}
}
\blu{Let us justify  the above observation.  We assume that event $A_\e\cap B_\e$ is fulfilled  and that $\xi( \L_{n+\e }\setminus {\stackrel{\circ}{\L}} _{n-\e})=0$. Recall that $\cG^*_\e$ is obtained by restricting the graph $\cG$ to $\L_{n+1}$ and by sliding any  vertex $\bar x $ to $x$. Since $S_n(\bar 0) \subset  \L_{n+\e }\setminus {\stackrel{\circ}{\L}} _{n-\e}$, 
if $\bar{0} \lr S_n(\bar{0})$ in $\cG$ then $\bar 0 \lr y$ for some point $y\in \L_{n-\e+1}\setminus \L_{n+\e} $ (using that edges have length smaller than $1$).
It must be $y= \bar{v}$ for some $v\in \G_\e$. Since $\|y-v\|_\infty \leq \e$, we conclude that $v\in  \L_{n+1}\setminus \L_{n} $.  Since we can restrict to  paths from $0 $ to $y$ with  intermediate points  lying inside $\L_{n-\e}$, we have that all the intermediate points  are of the form $\bar z $ for some $z\in \G_\e$. We therefore get that the above path realizing the event $\bar{0} \lr S_n(\bar{0})$ in $\cG$  corresponds to a path in $\cG_\e^*$ from $0 $ to $v$, $\|v\|_\infty \geq n$. On the other hand,  since $\xi( \L_{n+\e }\setminus {\stackrel{\circ}{\L}} _{n-\e})=0$, any path in $\cG_\e^*$ from $0 $ to $v$, with $\|v\|_\infty \geq n$, is obtained by sliding  some path in $\cG$ from $\bar 0$ to $\L_{n+\e}^c$. As $S_n(\bar 0) \subset  \L_{n+\e }\setminus {\stackrel{\circ}{\L}} _{n-\e}$, these paths in $\cG$ must realize the event $\bar{0}\lr S_n(\bar 0)$. This correspondence between paths  implies  a correspondence between $(+)$--pivotal points, leading to  identity \eqref{x54}.}
\end{proof}
In the last term in \eqref{x5} \blu{we can replace $\cG^*_\e$ with $\cG_{\e}^{\#}$, since they have the same law under $\bbP_\l$ conditioned to $\xi$. Now} we would like to replace $\cG_{\e}^{\#}$ with  $\cG_{\e}$. This is  possible  due to
Lemma \ref{uffa3}. Indeed, we have  
\begin{equation}\label{marsiglia}
\bbP_\l( A_\e\cap B_\e)= \bbP_\l (B_\e) [1-\bbP_\l( A_\e^c|B_\e)]= \bbP_\l (B_\e)  (1+o(1)) = \l \e^d (1+o(1))\,,
\end{equation}
 thus implying that Lemma \ref{uffa3} is equivalent to the property 
\begin{equation}\label{nivea}
\bbE_\l \bigl [
\xi (\L_{n+\rosso{2}})    \mathds{1}(A_\e\cap B_\e) 
\mathds{1}(\cG_{\e}\neq\cG_{\e}^{\#}) ]=o(\e^d)\,.
\end{equation}
By combining \eqref{x4}, \eqref{x5}, \eqref{marsiglia} and \eqref{nivea} we conclude that
\begin{equation}\label{adm_holdo}
\bbE_{0,\l}[|\text{Piv}_+(0\lr  S_n)|]=\bbE_\l \bigl[|\text{Piv}_+( 0\lr  S_{n} \text{ in }\cG_{\e}) |  \,|\, A_\e\cap B_\e \bigr]+o(1)\,.
\end{equation}
Due to the definition of the graph $G_\e$ built on $\bigl(\O_\e, \bbP^{(\e)}_\l\bigr)$ we have 
\begin{equation}\label{eroina}
\bbE_\l \bigl[|\text{Piv}_+( 0\lr  S_{n} \text{ in }\cG_{\e}) |  \,|\, A_\e\cap B_\e \bigr]=\bbE_\l ^{(\e)} \bigl[|\text{Piv}_+( 0\lr  S_{n} ) |  \,|\, \eta_0^\e=1 \bigr]\,.
\end{equation}
Above, and in what follows, events appearing in $\bbE_\l ^{(\e)}, \bbP_\l ^{(\e)}$ are referred to the graph $G_\e$.

\smallskip

By combining \eqref{adm_holdo} and \eqref{eroina} we have achieved  that
\begin{equation}\label{force}
\bbE_{0,\l}[|\text{Piv}_+(0\lr  S_n)|] =\lim _{\e \downarrow 0} \bbE_\l ^{(\e)} \bigl[|\text{Piv}_+( 0\lr  S_{n} ) |  \,|\, \eta_0^\e=1 \bigr]\,\rosso{.}
\end{equation}
To derive \eqref{kanan} from \eqref{force} it is enough to apply the following result:
\begin{Lemma}
\label{x9}
It holds
 $ \bbE_\l ^{(\e)} \bigl[|{\rm Piv}_+( 0\lr  S_{n} ) |  \,|\, \eta_0^\e=1 \bigr]
 =  \bbE^{(\e)}_\l \bigl[ \bigl|  {\rm Piv}(0 \lr  S_n ) \bigr| \bigr]$. 
\end{Lemma}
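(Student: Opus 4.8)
The plan is to show that the two expectations coincide term by term, i.e.\ that for the graph $G_\e$ under $\bbP^{(\e)}_\l$ a vertex $x\in\G_\e$ is $(+)$--pivotal for $\{0\lr S_n\}$ conditioned on $\eta_0^\e=1$ if and only if it is pivotal for $\{0\lr S_n\}$ in the unconditioned sense. The key observation is that $\{0\lr S_n\}$ is an \emph{increasing} event in the occupation variables $\eta_x^\e$: adding a vertex can only create new connections, never destroy them. For increasing events, being pivotal in the OSSS sense (the indicator changes when $\eta_x^\e$ is flipped) is equivalent to the following dichotomy: either $\eta_x^\e=1$ and removing $x$ destroys the event (so $x$ is $(+)$--pivotal in the sense defined after \eqref{force}), or $\eta_x^\e=0$ and adding $x$ creates the event.

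Concretely, I would argue as follows. First I would record that $\{0\lr S_n\}$ requires $\eta_0^\e=1$, so on the event $\{\eta_0^\e=0\}$ both sides of the claimed identity vanish after conditioning/intersecting appropriately; hence it suffices to work on $\{\eta_0^\e=1\}$. Then I would write, using independence of $\{x\in\text{Piv}(0\lr S_n)\}$ from $\eta_x^\e$,
\begin{equation*}
\bbE^{(\e)}_\l\bigl[|\text{Piv}(0\lr S_n)|\bigr]=\sum_{x\in\G_\e}\bbP^{(\e)}_\l\bigl(x\in\text{Piv}(0\lr S_n)\bigr)
=\sum_{x\in\G_\e}\bbP^{(\e)}_\l\bigl(x\in\text{Piv}(0\lr S_n),\,\eta_x^\e=1\bigr)\big/p_\l(\e)\,.
\end{equation*}
On the event $\{\eta_x^\e=1\}$, by monotonicity the pivotality condition $\mathds{1}_A(\eta^\e,\s^\e)\neq\mathds{1}_A(\eta^{\e,x},\s^\e)$ is equivalent to: $A$ holds for $(\eta^\e,\s^\e)$ but not for $(\eta^{\e,x},\s^\e)$, i.e.\ $x$ is exactly a $(+)$--pivotal vertex. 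Flipping the roles, $\bbP^{(\e)}_\l(x\in\text{Piv}(0\lr S_n),\eta_x^\e=1)=\bbP^{(\e)}_\l(x\in\text{Piv}_+(0\lr S_n))$ because $x\in\text{Piv}_+$ already forces $\eta_x^\e=1$ (condition (i) in the definition of $(+)$--pivotal). Dividing by $p_\l(\e)$ and using $\bbP^{(\e)}_\l(\,\cdot\,\mid\eta_0^\e=1)=\bbP^{(\e)}_\l(\,\cdot\,)/\bbP^{(\e)}_\l(\eta_0^\e=1)$ together with $\{0\in\text{Piv}_+\}\subset\{\eta_0^\e=1\}$ and $\{x\in\text{Piv}_+,x\neq0\}$ being independent of $\eta_0^\e$ would then match the conditioned expectation on the left-hand side.

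I would expect the only mildly delicate point to be bookkeeping the conditioning on $\eta_0^\e=1$ versus the unconditioned sum: one must check that removing the vertex $0$ itself as a ``pivotal'' candidate is handled correctly, since $0\in\text{Piv}_+(0\lr S_n)$ precisely when $0\lr S_n$ occurs, and that this contributes consistently on both sides. Since $\{0\lr S_n\}\subset\{\eta_0^\e=1\}$, the term $x=0$ contributes $\bbP^{(\e)}_\l(0\lr S_n)/p_\l(\e)=\bbP^{(\e)}_\l(0\lr S_n\mid\eta_0^\e=1)=\tilde\theta_n^{(\e)}(\l)$ to the unconditioned sum divided by $p_\l(\e)$, matching the $x=0$ term of $\bbE^{(\e)}_\l[|\text{Piv}_+(0\lr S_n)|\mid\eta_0^\e=1]$; for $x\neq0$ one uses independence of the relevant events from $\eta_0^\e$. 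The monotonicity argument itself is routine, so there is no real obstacle; the lemma is essentially a reconciliation of two conventions for counting pivotals for a monotone event.
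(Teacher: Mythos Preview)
Your approach is essentially the same as the paper's: both proofs hinge on the identity $\{x\in\text{Piv}_+(0\lr S_n)\}=\{x\in\text{Piv}(0\lr S_n)\}\cap\{\eta_x^\e=1\}$ (valid because the event is increasing) together with the independence of $\{x\in\text{Piv}(0\lr S_n)\}$ from $\eta_x^\e$. The paper runs the chain in the opposite direction, starting from the conditioned side and using that $\text{Piv}_+=\emptyset$ when $\eta_0^\e=0$ to drop the conditioning, then converting $\text{Piv}_+$ to $\text{Piv}$.

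There is one slip in your bookkeeping. You claim that for $x\neq 0$ the event $\{x\in\text{Piv}_+(0\lr S_n)\}$ is independent of $\eta_0^\e$. This is false: $x\in\text{Piv}_+$ forces $0\lr S_n$ to occur, which in turn forces $\eta_0^\e=1$, so the two events are very much dependent. The good news is that independence is not what you need. The containment $\{x\in\text{Piv}_+(0\lr S_n)\}\subset\{\eta_0^\e=1\}$ holds for \emph{every} $x\in\G_\e$, not just $x=0$, and that containment alone gives
\[
\bbP^{(\e)}_\l\bigl(x\in\text{Piv}_+(0\lr S_n)\bigr)\big/p_\l(\e)
=\bbP^{(\e)}_\l\bigl(x\in\text{Piv}_+(0\lr S_n)\,\big|\,\eta_0^\e=1\bigr)\,.
\]
So the case split $x=0$ versus $x\neq 0$ is unnecessary; the paper avoids it precisely by invoking this containment uniformly. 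Once you make that correction, your argument is complete and coincides with the paper's.
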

\begin{proof}
Using the fact that  $\{\text{Piv}_+(0\lr  S_n)\}$ is empty if  $\eta_0^{\e}\not =1$, we get
\begin{equation}
\begin{split}
 \bbE_\l ^{(\e)} \bigl[|{\rm Piv}_+( 0\lr  S_{n} ) |  \,|\, \eta_0^\e=1 \bigr]
& =\frac{1}{p_\l(\e)} \sum_{x\in \G_{\e}}\bbE_\l^{(\e)}\bigl[\mathds{1}\bigl( x\in\text{Piv}_+(0\lr  S_n) \bigr)\eta_0^{\e}\bigr]
\\
 &=
\frac{1}{p_\l(\e)} \sum_{x\in \G_{\e}}\bbE_\l ^{(\e)}\bigl[\mathds{1}\bigl( x\in\text{Piv}_+(0\lr  S_n) \bigr)\bigr]
\\
&=\frac{1}{p_\l(\e)} \sum_{x\in \G_{\e}}\bbE_\l^{(\e)} \bigl[\mathds{1}\bigl( x\in\text{Piv}(0\lr  S_n) \bigr)\eta_x^\e \bigr] \,.
\end{split}
\end{equation}Since 
 the events $\{x\in\text{Piv}(0\lr  S_n)\}$ and $\{\eta_x^{\e}=1\}$ are independent, the last expression equals
$ \sum_{x\in \G_{\e}}\bbE^{(\e)}_\l\bigl[\mathds{1}\bigl( x\in\text{Piv}(0\lr  S_n) \bigr) \bigr] 
 $, thus concluding the proof.
 \end{proof}
%
%
%


\bigskip
\bigskip

\noindent
{\bf Acknowledgements}. The authors thank H. Duminil-Copin and R. Meester for useful discussions. \rosso{We also thank the anonymous referee for the careful  reading  of the manuscript and for his/her corrections and suggestions}.


\end{document}